\documentclass[11pt,oneside,english]{amsart}
\usepackage[T1]{fontenc}
\usepackage[latin9]{inputenc}
\usepackage{geometry}
\usepackage{babel}
\usepackage{mathrsfs}
\usepackage{amstext}
\usepackage{amsthm}
\usepackage{amssymb}
\usepackage[unicode=true,pdfusetitle,
 bookmarks=true,bookmarksnumbered=false,bookmarksopen=false,
 breaklinks=false,pdfborder={0 0 1},backref=false,colorlinks=false]
 {hyperref}

\makeatletter
\numberwithin{equation}{section}
\numberwithin{figure}{section}
\theoremstyle{plain}
\newtheorem{thm}{\protect\theoremname}
\theoremstyle{plain}
\newtheorem{conjecture}[thm]{\protect\conjecturename}
\theoremstyle{plain}
\newtheorem{cor}[thm]{\protect\corollaryname}
\theoremstyle{definition}
\newtheorem{defn}[thm]{\protect\definitionname}
\theoremstyle{plain}
\newtheorem{prop}[thm]{\protect\propositionname}

\makeatother

\providecommand{\conjecturename}{Conjecture}
\providecommand{\corollaryname}{Corollary}
\providecommand{\definitionname}{Definition}
\providecommand{\propositionname}{Proposition}
\providecommand{\theoremname}{Theorem}

\begin{document}
\pagestyle{plain}
\title{averages of diagonal elliott-halberstam problem twisted by m\"obius
function with sobolev and h\"older-zygmund weights}
\author{marco cantarini}
\address{Department of Mathematics and Computer Science, Via Vanvitelli 1,
06123 Perugia (PG), Italy.}
\email{marco.cantarini@unipg.it}
\keywords{Goldbach's problem, Elliott-Halberstam conjecture, explicit formulae,
weighted averages.}
\subjclass[2020]{11N37, 11P32, 11N56.}
\begin{abstract}
Recalling that the so-called Elliott-Halberstam conjecture twisted
by the M\"obius function $\mu(n)$ claims that

\[
\sum_{q\leq N^{\theta}}\max_{y\leq N}\max_{(a,q)=1}\left|\sum_{\underset{{\scriptstyle n\equiv a\,\mod\,q}}{n\leq y}}\Lambda(n)\mu\left(N-n\right)-\frac{1}{\varphi\left(q\right)}\sum_{n\leq y}\Lambda(n)\mu\left(N-n\right)\right|\ll\frac{N}{\log\left(N\right)^{A}}
\]
for every $A>0$, where $0<\theta<1$ is fixed, and also recalling
that the validity of this conjecture, in combination with the validity
of the classical Elliott-Halberstam for suitable $\theta$, proves
the binary Goldbach conjecture, in this paper we study weighted average
variants of this problem. We will show that, under Generalized Riemann
Hypothesis, a weak version of the Gonek-Hejhal conjecture and working
with weights belonging to the Sobolev space $W^{2,1}$ or in the H\"older-Zygmund
spaces $\mathcal{C}^{\delta}$ for suitable range of $\delta$, the
bound of the average is consistent with the bound of the ``diagonal
versions'' of this conjecture (that is, taking $y=N$ and taking
$n\equiv N\mod q)$. In particular, in the case of weights in Sobolev
space, the consistent upper bound holds for the whole $0<\theta<1$
and, in the case of weights in the H\"older-Zygmund class $\mathcal{C}^{\delta}$,
for $\theta$ that depends on the choice of $\delta$ but still not
below the $1/2-2\varepsilon$ threshold. 
\end{abstract}

\maketitle

\section{introduction}

\subsection{Context}

The binary Goldbach conjecture is one of the oldest problems in mathematics
and it states that every even integer greater than 2 is the sum of
two primes. Despite its elementary formulation, it has proven incredibly
difficult to attack, due to its deep connections to some of the most
complex problems in analytic number theory. Consequently, many strategies
have been proposed to study this problem; a nice survey of these ideas
can be found \cite{L}. 

However, these attempts have proven to be completely unsuitable for
a definitive answer to the main conjecture and it is, in fact, a common
opinion that there are still serious obstacles to overcome that current
tools seem not to be able to handle. A widely held view is that the
main obstruction is the so-called parity problem. Roughly speaking,
sieve methods are extremely effective at controlling the distribution
of almost-primes, but they do not by themselves distinguish reliably
between integers with an even number of prime factors and integers
with an odd number of prime factors. This limitation is a serious
obstacle in problems where the exact multiplicative structure matters
(for a more detailed analysis, see \cite{FI 2010}, chapt. 16). A
classical illustration is provided by Chen's theorem \cite{CHE},
which states that every sufficiently large even integer is the sum
of a prime and an integer with at most two prime factors; the theorem
comes remarkably close to Goldbach's problem, but the remaining gap
reflects precisely the parity barrier. A similar theorem to Chen's
result can be found in \cite{CHE} regarding the twin prime conjecture
with the same limitations; this is not surprising, since these two
conjectures are widely believed to be closely connected, and one may
reasonably expect that a strategy proving one of them would also shed
light on the other.

The parity problem is closely related to the behavior of oscillatory
multiplicative functions, especially the M\"obius function
\[
\mu\left(n\right):=\begin{cases}
1, & n=1\\
\left(-1\right)^{k}, & n=p_{1}\cdots p_{k},\,k\in\mathbb{N},\,p_{j}\text{ distinct prime numbers}\\
0, & \text{otherwise.}
\end{cases}
\]

A useful heuristic principle, often called the M\"obius randomness
principle, is that $\mu(n)$ should exhibit substantial cancellation
when correlated with sufficiently structured sequences. In informal
terms, for a ``reasonable'' complex sequence $a_{n}$, the twisted
sum $\sum_{n\leq x}\mu(n)a_{n}$ is expected to be much smaller than
its trivial bound because of cancellation (see \cite{IK2004}, chap.
13, for more details).

Many attempts have been made to overcome the parity barrier, see e.g.
\cite{FI1998,P2015}. In 2017 Murty and Vatwani \cite{MV} postulated
a new conjecture that breaks down such obstacle, connecting the Elliott-Halberstam
conjecture and the equidistribution of the M\"obius function on shifted
primes. We begin by recalling the classical Elliott-Halberstam conjecture.
\begin{conjecture}
\label{conj:EH classic}(Elliott-Halberstam conjecture EH$\left(N^{\theta}\log\left(N\right)^{C}\right)$)
Let $0<\theta<1$ and $C>0$ be fixed. Then, for every $A>0$ we have
\[
\sum_{q\leq N^{\theta}\log\left(N\right)^{C}}\max_{y\leq N}\max_{(a,q)=1}\left|\sum_{\underset{{\scriptstyle n\equiv a\,\mod\,q}}{n\leq y}}\Lambda(n)-\frac{y}{\varphi\left(q\right)}\right|\ll_{A}\frac{N}{\log\left(N\right)^{A}}
\]
for every sufficiently large natural number $N$, where $\Lambda(n)$
is the Von Mangoldt function and $\varphi(n)$ is the Euler totient
function.
\end{conjecture}

Clearly, if $\theta<1/2$, it is, up to the usual technical normalization
of the level of distribution, the well-known Bombieri-Vinogradov theorem
(see, e.g., \cite{IK2004}, chap. 17). In that sense, Bombieri-Vinogradov
may be viewed as a ``satisfactory substitute'', citing \cite{IK2004},
of the GRH, which is, today, out of reach. Murty and Vatwani proposed
a version of the conjecture that connect EH and the distribution of
the M\"obius function.
\begin{conjecture}
(Shifted M\"obius Elliott--Halberstam Conjecture EH$_{\mu_{h}}\left(N^{\theta}\right)$)
Let $0<\theta<1$ be fixed. Then, for any $A>0$, we have
\[
\sum_{q\leq N^{\theta}}\max_{y<N}\max_{(a,q)=1}\left|\sum_{\underset{{\scriptstyle n\equiv a\,\mod\,q}}{n\leq y}}\Lambda(n)\mu\left(n+h\right)-\frac{1}{\varphi\left(q\right)}\sum_{n\leq y}\Lambda(n)\mu\left(n+h\right)\right|\ll_{A}\frac{N}{\log\left(N\right)^{A}}
\]
for every sufficiently large natural number $N$.
\end{conjecture}

One of the main results in \cite{MV} is that a suitable combination
of the classical Elliott-Halberstam conjecture and its shifted M\"obius-twisted
analogue would break the parity barrier in a strong quantitative form
for the twin primes problem. Note that, in what follows, $p$ will
always denote a prime number.
\begin{thm}
\label{thm:1.1 mur}(Theorem $1.1$ of \cite{MV}) Let $h\neq0$ be
a fixed even integer. Suppose that the conjectures EH$\left(N^{\theta}\log\left(N\right)^{C}\right)$
and EH$_{\mu_{h}}\left(N^{1-\theta}\right)$ are true for some fixed
$\theta<1$ and a suitably large fixed $C$. Then, for all sufficiently
large positive integers $N,$we then obtain the following:

1) We have
\[
\sum_{n\leq N}\Lambda(n)\Lambda(n+h)\geq\left(1+o\left(1\right)\right)\mathfrak{S_{t}}\left(h\right)\left(1-\mathcal{A}_{h}\right)N
\]
where
\[
\mathcal{A}_{h}:=\prod_{\underset{{\scriptstyle p>2}}{p\nmid h}}\left(1-\frac{1}{p\left(p-1\right)}\right)
\]
and
\[
\mathfrak{S}_{t}\left(h\right):=\prod_{p\mid h}\left(1+\frac{1}{p-1}\right)\prod_{p\nmid h}\left(1-\frac{1}{\left(p-1\right)^{2}}\right)
\]
is the singular series for the twin primes conjecture.

2) The asymptotic formula
\[
\sum_{n\leq N}\Lambda(n)\Lambda(n+h)\sim\mathfrak{S}_{t}\left(h\right)N
\]
is equivalent to the condition
\[
\sum_{n\leq N}\Lambda(n)\mu(n+h)=o\left(N\right).
\]
\end{thm}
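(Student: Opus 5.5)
The plan is to decompose $\Lambda(n+h)$ by M\"obius inversion, $\Lambda(m)=-\sum_{d\mid m}\mu(d)\log d$, and to cut the divisor sum at $D:=N^{\theta}$. Inserting this with $m=n+h$ gives
\[
S(N):=\sum_{n\le N}\Lambda(n)\Lambda(n+h)=-\sum_{n\le N}\Lambda(n)\sum_{\substack{d\mid n+h\\ d\le D}}\mu(d)\log d-\sum_{n\le N}\Lambda(n)\sum_{\substack{d\mid n+h\\ d> D}}\mu(d)\log d=:S_{1}+S_{2}.
\]
For $S_{1}$ I swap the sums and obtain $-\sum_{d\le D}\mu(d)\log d\sum_{n\le N,\,n\equiv -h\,(d)}\Lambda(n)$. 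The inner sum is replaced by $N/\varphi(d)$ for $(d,h)=1$; the finitely many residue issues with $(d,h)>1$ only change the local factors at $p\mid h$. The error is controlled by EH$\left(N^{\theta}\log(N)^{C}\right)$ (Conjecture~\ref{conj:EH classic}), since $|\mu(d)\log d|\le\log N$ and the $\log(N)^{C}$ slack absorbs the loss. The main term $-N\sum_{d\le D,(d,h)=1}\mu(d)\log d/\varphi(d)$ is evaluated by a standard contour or Selberg-type computation. This produces the singular series $\mathfrak{S}_{t}(h)N$ times a factor that is explicit in $\theta$, plus a secondary term of size $N\log D$.

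For $S_{2}$ I reflect the divisor: $d=(n+h)/e$ with $e<(n+h)/D\le N^{1-\theta}$ (up to $O(1)$ factors), so that $\log d=\log(n+h)-\log e$. On squarefree $n+h$ we have $\mu((n+h)/e)=\mu(n+h)\mu(e)$. The non-squarefree $n+h$ contribute to $S_{2}$ only through terms with $p^{2}\mid n+h$; I would bound these separately, using small $p$ with Brun--Titchmarsh/EH for $\Lambda$ in progressions mod $p^{2}$ and large $p$ trivially. After this step $S_{2}$ becomes
\[
-\sum_{e\le N^{1-\theta}}\mu(e)\sum_{\substack{n\le N\\ n\equiv -h\,(e)}}\Lambda(n)\mu(n+h)\bigl(\log(n+h)-\log e\bigr).
\]
By partial summation in $n$, to remove $\log(n+h)$, and by EH$_{\mu_{h}}\left(N^{1-\theta}\right)$, the inner sum equals $\frac{1}{\varphi(e)}\sum_{n\le y}\Lambda(n)\mu(n+h)$ up to an acceptable error. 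The max over $y$ in the conjecture is exactly what the partial summation requires. So $S_{2}=-c(\theta,h)\,\mathcal{M}(N)\log N+$ (terms involving $\mathcal{M}(y)$) $+\,O(N)$, where $\mathcal{M}(y):=\sum_{n\le y}\Lambda(n)\mu(n+h)$ and $c(\theta,h)$ comes from $\sum_{e\le N^{1-\theta}}\mu(e)\log(\cdot)/\varphi(e)$.

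The delicate step is matching the two halves. The $N\log D$ secondary terms from $S_{1}$ must cancel against the corresponding contribution from $S_{2}$, leaving
\[
S(N)=\mathfrak{S}_{t}(h)N+\kappa\,\mathcal{M}(N)+o(N)
\]
for an explicit nonzero constant $\kappa$ (after normalising the logarithmic weights). I expect this bookkeeping to be the main obstacle: getting the constants exactly right, treating the coprimality $(e,h)$, and using the non-squarefree correction to see that $\kappa$ is nonzero and that no further $\log$-size terms survive. Once this is established, part 2 of the theorem is immediate, since $S(N)\sim\mathfrak{S}_{t}(h)N$ holds if and only if $\mathcal{M}(N)=o(N)$.

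For part 1 I use $|\mathcal{M}(N)|\le\sum_{n\le N}\Lambda(n)\mu^{2}(n+h)$. This is the density of squarefree shifted primes, computed by writing $\mu^{2}(m)=\sum_{a^{2}\mid m}\mu(a)$ and using Bombieri--Vinogradov/Siegel--Walfisz for small $a$ and a trivial bound for large $a$. It gives a constant times $N$, with Euler factor $\prod_{p\nmid h,p>2}\left(1-\frac{1}{p(p-1)}\right)=\mathcal{A}_{h}$ after normalising by $\mathfrak{S}_{t}(h)$. Substituting the worst-sign case into the identity then gives $S(N)\ge(1+o(1))\mathfrak{S}_{t}(h)(1-\mathcal{A}_{h})N$.
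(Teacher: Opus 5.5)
Your architecture is the right one. It is the Murty--Vatwani argument, which the paper cites and then reproduces for Goldbach in the proof of Theorem~\ref{thm:GB proof DIAG}. The gap is the step you defer as ``the main obstacle'', and it is mis-framed. $S_{1}$ has no secondary term of size $N\log D$ and no $\theta$-dependent factor, and $S_{2}$ has no surviving $\mathcal{M}(N)\log N$ term. The missing input is the pair of standard estimates
\[
\sum_{\substack{e\le E\\ (e,h)=1}}\frac{\mu(e)}{\varphi(e)}\ll\exp\bigl(-c\sqrt{\log E}\bigr),\qquad\sum_{\substack{e\le E\\ (e,h)=1}}\frac{\mu(e)\log e}{\varphi(e)}=-\mathfrak{S}_{t}(h)+O\bigl(\exp(-c\sqrt{\log E})\bigr).
\]
Both follow from $\sum_{(e,h)=1}\mu(e)\varphi(e)^{-1}e^{-s}=G(s)/\zeta(1+s)$ with $G$ holomorphic near $0$ and $G(0)=\mathfrak{S}_{t}(h)$.

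These estimates settle everything directly:
\begin{itemize}
\item The second estimate gives $S_{1}=\mathfrak{S}_{t}(h)N+o(N)$ outright.
\item In $S_{2}$, after EH$_{\mu_{h}}$, the coefficient of $\log(n+h)$ is the first sum with $E=(n+h)/D$. That piece is therefore $o(N)$; bound the range $n\le N^{\theta'}$, $\theta<\theta'<1$, trivially.
\item The $\log e$ piece of $S_{2}$ gives $-\mathfrak{S}_{t}(h)\mathcal{M}(N)+o(N)$.
\end{itemize}
So $\kappa=-\mathfrak{S}_{t}(h)$, and the two halves are evaluated independently, exactly as $S_{3}$ and $S_{4}$ are in the paper. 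The cancellation you envisage could not work anyway. A term $c\,\mathcal{M}(N)\log N$ with $c\neq0$ cannot be cancelled by a multiple of $N\log D$ from $S_{1}$, since $\mathcal{M}(N)$ is unknown. Also, an $O(N)$ error in $S_{2}$ is useless when the goal is an identity up to $o(N)$.

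This matters for part~1 as well. You need $\kappa=-\mathfrak{S}_{t}(h)$ exactly; a generic nonzero $\kappa$ does not give the constant $\mathfrak{S}_{t}(h)(1-\mathcal{A}_{h})$. No normalisation by $\mathfrak{S}_{t}(h)$ enters the squarefree count: one has simply $\sum_{n\le N}\Lambda(n)\mu^{2}(n+h)\sim\mathcal{A}_{h}N$, which you insert into $S(N)=\mathfrak{S}_{t}(h)\bigl(N-\mathcal{M}(N)\bigr)+o(N)$.

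Your treatment of non-squarefree $n+h$ in $S_{2}$ also needs cancellation, not upper bounds such as Brun--Titchmarsh. A positive proportion of primes $n$ have $p^{2}\mid n+h$ for some small $p\nmid h$. For those $n$, $\sum_{d\mid n+h,\,d>D}\mu(d)\log d=-\sum_{d\mid n+h,\,d\le D}\mu(d)\log d$ is not individually small; it can be of size $\log p$. You must apply EH with moduli $[b^{2},d]\le B^{2}N^{\theta}$, where $B$ is a power of $\log N$. This, not the $\log d$ weight, is what the $\log(N)^{C}$ slack is for. The resulting main term then vanishes, again by the first estimate: for a prime $p\nmid h$,
\[
\sum_{(d,h)=1}\frac{\mu(d)\log d}{\varphi([p^{2},d])}=-\frac{\log p}{p(p-1)}\sum_{p\nmid d,\,(d,h)=1}\frac{\mu(d)}{\varphi(d)}=0.
\]
The cleaner route, as in the paper, is to insert $\mu^{2}(n+h)$ at the start, at cost $O(N^{1/2}\log^{2}N)$, and expand it only inside $S_{1}$.
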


Note that, by the Bombieri-Vinogradov theorem, the previous result
implies that the proof of the twin prime conjecture follows from the
proof of the EH$_{\mu_{h}}\left(N^{\theta}\right)$ conjecture for
$\theta>1/2$.

The same ideas apply to Goldbach's problem. Indeed, Huang and Li proposed
the following twisted version with the M\"obius version of the EH
conjecture (see \cite{HL2020}).
\begin{conjecture}
\label{conj:twistEH}(Elliott-Halberstam conjecture twisted by M\"obius
function EH$_{\mu}$$\left(N^{\theta}\right)$) Let $0<\theta<1$
be fixed. Then, for every $A>0$ we have
\[
\sum_{q\leq N^{\theta}}\max_{y<N}\max_{(a,q)=1}\left|\sum_{\underset{{\scriptstyle n\equiv a\,\mod\,q}}{n\leq y}}\Lambda(n)\mu\left(N-n\right)-\frac{1}{\varphi\left(q\right)}\sum_{n\leq y}\Lambda(n)\mu\left(N-n\right)\right|\ll_{A}\frac{N}{\log\left(N\right)^{A}}
\]
for every sufficiently large natural number $N$.
\end{conjecture}

From these conjectures, the mentioned authors proved the following
theorem (see \cite{HL2020}).
\begin{thm}
\label{thm:GB proof}For a fixed $A>0$ assume that EH$\left(N^{\theta}\log\left(N\right)^{2A+8}\right)$
and EH$_{\mu}\left(N^{1-\theta}\right)$ hold for some $0<\theta<1$.
Then, for all sufficiently large positive integers $N,$the following
hold:

1) We have

\[
R\left(N\right):=\sum_{n<N}\Lambda\left(n\right)\Lambda\left(N-n\right)\geq N\mathfrak{S}\left(N\right)\left(1-\mathcal{A}_{N}\right)+O_{A,\theta}\left(\frac{N}{\log\left(N\right)^{A}}\right)
\]
where 
\[
\mathfrak{S}\left(N\right):=\begin{cases}
2\prod_{\underset{{\scriptstyle p>2}}{p\mid N}}\left(1+\frac{1}{p-2}\right)\prod_{p>2}\left(1-\frac{1}{\left(p-1\right)^{2}}\right), & N\text{ even}\\
0, & N\text{ odd}
\end{cases}
\]
 is the singular series for the binary Goldbach conjecture and $\mathcal{A}_{N}$
is the same of Theorem \ref{thm:1.1 mur}.

2) The assertions

\[
R\left(N\right)\sim N\mathfrak{S}\left(N\right)
\]
 and 
\[
\sum_{n<N}\Lambda(n)\mu\left(N-n\right)=o\left(N\right)
\]
 are equivalent.
\end{thm}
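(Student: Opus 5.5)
We follow the Murty–Vatwani strategy, transplanted to the Goldbach setting. The starting point is the identity $\Lambda=-\mu*\log$ applied to the shifted variable $m=N-n$:
\[
\Lambda(N-n)=-\sum_{d\mid N-n}\mu(d)\log d .
\]
We split the divisor sum at $D:=N^{\theta}\log(N)^{-B}$ (with $B$ to be chosen in terms of $A$). For $d\le D$ we keep the sum as it stands. For $d>D$ we write $d=(N-n)/e$ and switch to the complementary divisor $e<(N-n)/D\le N^{1-\theta}\log(N)^{B}$. Since $\mu(d)=\mu((N-n)/e)$ is not multiplicative in $e$ alone, we instead bound from below using the nonnegativity of $\Lambda(n)\Lambda(N-n)$. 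Following \cite{MV}, we consider the quantity
\[
S:=\sum_{n<N}\Lambda(n)\Bigl(\Lambda(N-n)+\sum_{\substack{d\mid N-n\\ d\le D}}\mu(d)\log d\Bigr)-\text{(correction)} ,
\]
and treat the large-divisor part through the inequality $-\sum_{d\mid m,\,d>D}\mu(d)\log d\ge\ldots$. More concretely, for $m=N-n$ with $d\mid m$, $d>D$, we write $\log d=\log m-\log(m/d)$. This gives
\[
\sum_{\substack{d\mid m\\ d>D}}\mu(d)\log d=\log m\sum_{\substack{d\mid m\\ d>D}}\mu(d)-\sum_{\substack{e\mid m\\ e<m/D}}\mu(m/e)\log(m/e)\cdot\tfrac{\log(m/e)-\log m+\log m}{\log(m/e)},
\]
so $\mu(m/e)$ reduces, on the relevant support, to quantities like $\mu(m)\mu(e)$ when $(e,m/e)=1$; the non-coprime pairs contribute negligibly after summing over $n$ weighted by $\Lambda(n)$. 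The outcome is that $R(N)$ decomposes into three pieces. The first is a small-modulus piece $\sum_{d\le D}\mu(d)\log d\sum_{n\equiv N\,(d)}\Lambda(n)$. The second is a large-modulus piece involving $\sum_{e\le N^{1-\theta}}\mu(e)\,g(e)\sum_{n\equiv N\,(e)}\Lambda(n)\mu(N-n)$. The third is a term of the shape $\sum_{n<N}\Lambda(n)\mu(N-n)\cdot(\text{smooth})$.

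For the first piece we replace $\sum_{n<N,\,n\equiv N\,(d)}\Lambda(n)$ by $N/\varphi(d)$ using EH$\left(N^{\theta}\log(N)^{2A+8}\right)$; the total error is $\ll \log N\cdot N\log(N)^{-A-1}$. The main term $-N\sum_{d\le D}\mu(d)\log d/\varphi(d)$ restricted to $(d,N)=1$ is evaluated by the standard contour/Mertens argument. It yields $N\mathfrak{S}(N)$ times the appropriate constant, plus $O(N\log(N)^{-A})$, where the dependence on $A$ governs the choice of the $\log$-power $2A+8$. For the second piece we replace $\sum_{n\equiv N\,(e)}\Lambda(n)\mu(N-n)$ by $\varphi(e)^{-1}\sum_{n<N}\Lambda(n)\mu(N-n)$, with an error controlled by EH$_{\mu}\left(N^{1-\theta}\right)$ (note $(N,e)=1$ is needed, and the terms with $(N,e)>1$ are trivially small). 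This reduces the second piece to $c_N\sum_{n<N}\Lambda(n)\mu(N-n)$ for an explicit Euler-product constant $c_N$, which is exactly where $\mathcal{A}_N=\prod_{p\nmid N,\,p>2}(1-1/(p(p-1)))$ arises.

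Collecting terms gives an identity
\[
R(N)=N\mathfrak{S}(N)-\kappa_N\sum_{n<N}\Lambda(n)\mu(N-n)+O\bigl(N\log(N)^{-A}\bigr),
\]
together with the trivial constraint $\bigl|\sum_{n<N}\Lambda(n)\mu(N-n)\bigr|\le\sum_{n<N}\Lambda(n)\mu^{2}(N-n)$. The latter is evaluated asymptotically by Bombieri–Vinogradov, and its size is $N\mathfrak{S}(N)\mathcal{A}_N/\kappa_N$ up to the error. Part 1 follows by inserting the worst-case sign. Part 2 is immediate from the identity, because $\kappa_N$ is bounded away from $0$. The main obstacle is the second step: correctly handling $\mu(d)$ for large $d$ via the complementary divisor. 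Converting $\mu(m/e)$ into $\mu(m)$ times arithmetic in $e$ requires care with square factors and with the $\log d$ weight, and this is what produces the exact constants $\mathcal{A}_N$ and $\kappa_N$. There I would first try to mimic \cite{MV}'s treatment verbatim, with $n+h$ replaced by $N-n$.
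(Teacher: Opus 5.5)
Your overall architecture is the right one: split the divisor sum for $\Lambda(N-n)$ at a level tied to $N^{\theta}$, use EH on the small divisors and EH$_{\mu}$ on the complementary ones, and finish with an identity for $R(N)$ plus $\mu\le\mu^{2}$. But the step you flag as the main obstacle is where there is a genuine gap.

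\textbf{The squarefree issue.} The relation $\mu(d)=\mu(N-n)\mu(e)$ for $e=(N-n)/d$ holds only when $N-n$ is squarefree; coprimality of $e$ and $(N-n)/e$ is not enough. Your claim that everything else ``contributes negligibly after summing over $n$'' has no argument behind it, and it cannot come from a trivial bound. Indeed, $N-p$ is non-squarefree for a positive proportion (about $1-\mathcal{A}_{N}$) of primes $p$, and on those $n$ the truncated sums over $d\le D$ and $d>D$ do not vanish separately. That contribution does turn out to be small, but only because it cancels against the matching small-divisor contribution, and you can only see this after the expansion below.

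The missing idea (as in \cite{HL2020} and the proof of Theorem \ref{thm:GB proof DIAG}) is to insert $\mu(N-n)^{2}$ \emph{before} splitting. This costs only $O(N^{1/2}\log(N)^{3})$, since $\Lambda(N-n)\neq0=\mu(N-n)$ forces $N-n=p^{k}$ with $k\ge2$. The large-divisor transformation then becomes exact: $\mu(d)\log(1/d)=\mu(N-n)\mu(k)\bigl(\log k-\log(N-n)\bigr)$ with $k=(N-n)/d$.

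The price is paid on the small-divisor side. You write $\mu(N-n)^{2}=\sum_{b^{2}\mid N-n}\mu(b)$, discard $b>B:=\log(N)^{A+4}$ trivially, and apply EH to the moduli $[b^{2},d]\le N^{\theta}B^{2}=N^{\theta}\log(N)^{2A+8}$. That is where the level in the hypothesis comes from, not from the Mertens-type evaluation. You must then also check that the $b$-weighted main term $N\sum_{d\le N^{\theta}}\frac{\mu(d)\log(1/d)}{\varphi(d)}\sum_{b\le B,\,(bd,N)=1}\frac{\mu(b)\varphi((b,d))}{b\varphi(b)}$ is still $N\mathfrak{S}(N)+O(N\log(N)^{-A})$.

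\textbf{Further points to repair.}
\begin{enumerate}
\item \emph{Cut point.} With your cut at $D=N^{\theta}\log(N)^{-B}$, the complementary divisors run up to $N^{1-\theta}\log(N)^{B}$, not $N^{1-\theta}$ as you write. EH$_{\mu}(N^{1-\theta})$ says nothing about $N^{1-\theta}<e\le N^{1-\theta}\log(N)^{B}$, and bounding that range trivially (even with Brun--Titchmarsh) costs about $N\log(N)\log\log N$. Cut exactly at $N^{\theta}$ and put the extra logarithms on the EH side, as above.
\item \emph{$k$-dependent range and the log term.} After passing to $k$, the condition $d>N^{\theta}$ becomes $n<N-kN^{\theta}$, so the inner sums have a $k$-dependent range; this is what the $\max_{y<N}$ in EH$_{\mu}$ is for. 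Also, the $-\log(N-n)$ part of $\log(1/d)$ produces $\sum_{k}\mu(k)\sum_{n\equiv N\bmod k}\Lambda(n)\mu(N-n)\log(N-n)$. This needs a logarithmically weighted form of EH$_{\mu}$, obtained from the $\max_{y}$ version by partial summation. It is then negligible because $\sum_{(k,N)=1,\,k<R}\mu(k)/\varphi(k)\ll e^{-c\sqrt{\log R}}$.
\item \emph{Where $\mathcal{A}_{N}$ comes from.} It does not come from the large-modulus constant. That constant is $\sum_{(k,N)=1}\mu(k)\log k/\varphi(k)=-\mathfrak{S}(N)$, so your $\kappa_{N}$ is just $\mathfrak{S}(N)$ and the identity reads $R(N)=\mathfrak{S}(N)\bigl(N-\sum_{n<N}\Lambda(n)\mu(N-n)\bigr)+O(N\log(N)^{-A})$. $\mathcal{A}_{N}$ enters only through $\sum_{n<N}\Lambda(n)\mu(N-n)^{2}=N\mathcal{A}_{N}+O(N\log(N)^{-A})$ for even $N$. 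Part 1 then follows from $\mu\le\mu^{2}$; positivity of $\Lambda(n)\Lambda(N-n)$ plays no role.
\item \emph{Your displayed identity is wrong as written.} The fraction in it equals $1$. The correct version is $\sum_{d\mid m,\,d>D}\mu(d)\log d=\log m\sum_{d\mid m,\,d>D}\mu(d)-\sum_{e\mid m,\,e<m/D}\mu(m/e)\log e$.
\end{enumerate}
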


As above, once the classical Elliott-Halberstam input is supplied
by Bombieri-Vinogradov, proving EH$_{\mu}(N^{\theta})$ for some $\theta>1/2$
would yield the binary Goldbach conjecture for all sufficiently large
even integers. The work of Huang and Li may also be viewed as a continuation
of earlier work of Pan \cite{P1982}, who identified a related obstruction
through the analysis of a truncated remainder term of the form
\[
R^{*}\left(N\right):=\sum_{n<N}\left(\sum_{\underset{{\scriptstyle d_{1}>Q}}{d_{1}\mid n}}\mu\left(d_{1}\right)\log\left(d_{1}\right)\right)\left(\sum_{\underset{{\scriptstyle d_{2}>Q}}{\underset{{\scriptstyle (d_{2},N)=1}}{d_{2}\mid(N-n)}}}\mu\left(d_{2}\right)\log\left(d_{2}\right)\right)
\]
where the truncation at $Q:=N^{1/2}\left(\log\left(N\right)\right)^{-20}$
depends on the Bombieri-Vinogradov theorem.

\subsection{Aims of the paper and methods of proof}

The main goals of this paper are to study weighted average variants
of EH$_{\mu}\left(N^{\theta}\right)$, that is
\begin{equation}
\sum_{n}\sum_{m}\Lambda\left(n\right)\chi\left(n\right)\mu\left(m\right)f\left(\frac{n+m}{N}\right),\,\sum_{n}\sum_{m}\Lambda\left(n\right)\chi\left(n\right)\mu\left(m\right)\log\left(m\right)f\left(\frac{n+m}{N}\right)\label{eq:main average}
\end{equation}
where $N\geq4$ is a natural number, $f$ is a suitable weight and
$\chi$ is a primitive non-principal Dirichlet character $\mod q$,
where $q>1$ is an integer. These quantities are natural because,
using orthogonality of Dirichlet characters and making simple considerations,
one allows to rewrite congruence sums, always assuming $(a,q)=1$
and $y<N$, such as
\[
\sum_{\underset{{\scriptstyle n\equiv a\,\mod\,q}}{n\leq y}}\Lambda(n)\mu\left(N-n\right)-\frac{1}{\varphi\left(q\right)}\sum_{n\leq y}\Lambda(n)\mu\left(N-n\right)
\]
\[
=\frac{1}{\varphi\left(q\right)}\sum_{\chi\neq\chi_{0}}\overline{\chi}\left(a\right)\sum_{n\leq y}\Lambda\left(n\right)\chi^{*}\left(n\right)\mu\left(N-n\right)+O\left(\omega\left(q\right)\log\left(N\right)\right)
\]
where $\omega\left(q\right):=\sum_{p\mid q}1\ll\log\left(q\right)$
and $\chi^{*}$ is the primitive character that induces $\chi$, so
the average (\ref{eq:main average}) is what we need, paying a reasonable
error. We actually consider variants of EH$_{\mu}\left(N^{\theta}\right)$
in order to restore a useful symmetry between the prime variable and
the M\"obius variable: unlike the original form of EH$_{\mu}(N^{\theta})$,
where the summation is cut off at $y$ but the twist is evaluated
at $N-n$, the weighted expression depends symmetrically on the combination
$n+m$. This symmetry is an important feature of our method. Moreover,
in \cite{HL2020} (but actually also in \cite{MV}, since the proofs
are very similar) it is crucial to have $\max_{y\leq N}\sum_{n\leq y}$
because it is used to derive a logarithmically weighted variant, namely
a bound for the same discrepancy with $\mu(N-n)\log(N-n)$ in place
of $\mu(N-n)$. Thus, one cannot simply discard the $\max_{y\leq N}$,
unlike $\max_{(a,q)=1}$ that actually can be removed (see \cite{HL2020,MV}),
without replacing it by a different and suitably robust hypothesis.
For this reason, we propose the following modified, and in some sense
weaker, version of EH$_{\mu}\left(N^{\theta}\right)$.
\begin{conjecture}
\label{conj:DiagonalEH}(Diagonal Elliott-Halberstam conjecture twisted
by M\"obius function and logarithm dEH$_{\mu,\log}$$\left(N^{\theta}\right)$).
Let $0<\theta<1$ be fixed. Then, for every $A>0$ we have
\[
\sum_{\underset{{\scriptstyle \left(N,q\right)=1}}{q\leq N^{\theta}}}\left|\sum_{\underset{{\scriptstyle n\equiv N\,\mod\,q}}{n<N}}\Lambda(n)g\left(N-n\right)-\frac{1}{\varphi\left(q\right)}\sum_{n<N}\Lambda(n)g\left(N-n\right)\right|\ll_{A}\frac{N}{\log\left(N\right)^{A}}
\]
for every sufficiently large natural number $N$, where $g(n)\equiv\mu(n)$
or $g(n)\equiv\mu(n)\log\left(n\right).$
\end{conjecture}

This conjecture removes the two maxima appearing in EH$_{\mu}\left(N^{\theta}\right)$,
but in exchange it requires the same strength of cancellation both
in the pure M\"obius case and in the logarithmically weighted case.

However, in Section $2$, we show that if we replace Conjecture \ref{conj:twistEH}
with Conjecture \ref{conj:DiagonalEH}, then Theorem \ref{thm:GB proof}
still holds, and this fact justifies our study of (\ref{eq:start}).
In the same section we also show that the maximum over the residue
class can be removed from the classical Elliott-Halberstam input as
well.

After that, we consider (\ref{eq:start}) with different types of
weights: if $f$ is sufficiently regular, that is, it has its support
contained in $[\alpha,\beta)$ and its restriction $f_{\vert\left(\alpha,\beta\right)}$belongs
to the Sobolev space $W^{2,1}\left(\alpha,\beta\right)$, then we
prove, under GRH and assuming the simplicity of the non-trivial zeros
of the Riemann zeta function $\zeta(s)$, a truncated explicit formula
for (\ref{eq:main average}). In somewhat informal terms, the main
tool for proving this explicit formula will be a version of Abel's
summation formula in two dimensions that allows us to \textquotedbl decouple\textquotedbl{}
the explicit formulas of the averages of the arithmetic functions
that define the problem. This explicit formula implies an averaged
estimate compatible with the expected order of magnitude in this averaged
setting, of the shape
\begin{equation}
\sum_{\underset{{\scriptstyle \left(N,q\right)=1}}{1<q\leq N^{\theta}}}\frac{1}{\varphi\left(q\right)}\left|\sum_{\chi\neq\chi_{0}}\overline{\chi}\left(N\right)\sum_{N\alpha<n\leq N\beta}\sum_{m\leq N\beta-n}\Lambda\left(n\right)\chi\left(n\right)\mu\left(m\right)f\left(\frac{n+m}{N}\right)\right|\label{eq:first result}
\end{equation}
\[
\ll_{\varepsilon}N^{2-\varepsilon}E\left(f^{\prime\prime}\right)
\]
where $E\left(f^{\prime\prime}\right)$ is an (explicit) error that
depends only on $f^{\prime\prime}$, for every $\theta<1-2\varepsilon$
and where $\varepsilon>0$ is a sufficiently small number. The same
result holds in the logarithmically weighted case. 

An important remark is that in (\ref{eq:first result}), and during
the rest of the paper, we commit the following abuse of notation for
the sake of greater readability: the sum $\sum_{\chi\neq\chi_{0}}$
denotes the sum over all non-principal characters modulo $q$, while
$\chi(n)$ is understood as the value of the primitive character inducing
$\chi$.

We also show that, taking a discrete version of the two dimensional
Abel formula, we are able to work with different weights $f$. Essentially,
the problem shifts from the analysis of integrals with second derivative
of the weight to sums with forward differences of order 2 of the weight.
In order to have a control over the growth of the forward differences
of $f$, it becomes natural to consider functions that lie in appropriate
H\"older-Zygmund spaces.

More precisely, we show that if $f$ is in the H\"older-Zygmund class
$\mathcal{C}^{\delta}\left(\mathbb{R}\right),1\leq\delta<2$, again
with the support of $f$ contained in $[\alpha,\beta)$, then we obtain
a truncated explicit formula and a corresponding averaged bound compatible
with the expected conjectural size, that is
\[
\sum_{\underset{{\scriptstyle \left(N,q\right)=1}}{1<q\leq N^{\theta}}}\frac{1}{\varphi\left(q\right)}\left|\sum_{\chi\neq\chi_{0}}\overline{\chi}\left(N\right)\sum_{N\alpha<n\leq N\beta}\sum_{m\leq N\beta-n}\Lambda\left(n\right)\chi\left(n\right)\mu\left(m\right)f\left(\frac{n+m}{N}\right)\right|\ll_{\varepsilon,\delta,\alpha,\beta}N^{2-\varepsilon}
\]
 for $\theta<\delta-1-2\varepsilon$ if $\delta\in\left[\frac{3}{2},2\right)$
and for $\theta<\frac{1}{2}-2\varepsilon$ if $\delta\in\left[1,3/2\right)$.
Finally, we apply the same strategy to the case with logarithmic weighting,
which allow us to show, in both the continuous and discrete cases,
results similar to the previous ones.

The paper is organized as follows. In Section $2$, we show that the
diagonal conjecture dEH$_{\mu,\log}$ is sufficient for the proof
of the Goldbach's problem. Section $3$ collects the preliminary definitions,
lemmas, and auxiliary tools used later on. Section $4$ contains the
first main results: we prove the truncated explicit formulae and the
upper bounds for the weighted average of dEH$_{\mu,\log}$ with $g(n)=\mu(n)$
in both cases $W^{2,1}$ and $\mathcal{C}^{\delta}$. In Section $5$
we prove similar results for the logarithmic case of dEH$_{\mu,\log}$
with $g(n)=\mu(n)\log(n)$. In Section $6$ we show that some interesting
examples of averages are a special cases of our results, such as,
for instance, the well-known Ces\`aro-Riesz averages.

\subsection{Notation}

We use $Y\ll Z$, $Y=O\left(Z\right)$ to denote the estimate $\left|Y\right|\leq CZ$
for some suitable $C>0$. If the constant $C$ depends on some parameters
$A,B,C\dots$ we write $Y\ll_{A,B,C,\dots}Z$ or $Y=O_{A,B,C,\dots}\left(Z\right)$.
We write $Y\asymp Z$ if $Y\ll Z\ll Y$. 

We write $\sum_{\chi\neq\chi_{0}}$ with the meaning of the sum on
all the Dirichlet characters $\mod q$ excluding the principal one
but, with an abuse of notation, $\chi\left(n\right)$ is always a
primitive character that induces $\chi$. We denote
\[
\sum_{n\leq A}:=\sum_{0<n\leq A}
\]
and in all other cases it will be explicitly written from which integer
the sum starts.

With the symbol $C^{n}\left(A\right),\,n\in\mathbb{N}$, where $A=\mathbb{R}$
of $A$ is an interval, we indicate the space of continuous function
whose derivatives up to the $n$-th order are continuous in $A$.
Clearly, $C^{0}\left(A\right)$ is the space of the continuous functions
in $A$. With $AC\left(A\right)$ we indicate the space of the absolutely
continuous functions in $A$.

With $L^{\infty}\left(\mathbb{R}\right)$ we indicate the space of
measurable functions in $\mathbb{R}$ whose essential supremum is
finite, equipped with the norm
\[
\left\Vert f\right\Vert _{\infty}:=\underset{{\scriptstyle x\in\mathbb{R}}}{\text{ess sup}}\left|f\left(x\right)\right|.
\]

With $\Gamma\left(z\right)$ we denote the Euler Gamma function
\[
\Gamma\left(z\right):=\int_{0}^{+\infty}x^{z-1}e^{-x}dx,\,\text{Re}(z)>0,
\]
with $B\left(z,h\right)$ the Beta function
\[
B\left(z,h\right):=\int_{0}^{1}x^{z-1}\left(1-x\right)^{h-1}dx=\frac{\Gamma\left(z\right)\Gamma\left(h\right)}{\Gamma\left(z+h\right)},\,\text{Re}(z)>0,\text{Re}(h)>0
\]
and with $\psi^{\left(0\right)}\left(z\right)$ the Digamma function
\[
\psi^{\left(0\right)}\left(z\right):=\frac{\Gamma^{\prime}\left(z\right)}{\Gamma\left(z\right)},z\in\mathbb{C},\,z\neq0,-1,-2,\dots.
\]

\section{proof of goldbach's conjecture assuming conjecture \ref{conj:DiagonalEH}}

In this section we prove a version of Theorem \ref{thm:GB proof}
with Conjecture \ref{conj:DiagonalEH}. We will often recall some
results in \cite{HL2020} because the proof is, essentially, the same. 
\begin{thm}
\label{thm:GB proof DIAG}Let $0<\theta<1$ and $C>0$ be fixed. Assume
that
\begin{equation}
\sum_{\underset{{\scriptstyle \left(N,q\right)=1}}{q\leq N^{\theta}\log\left(N\right)^{C}}}\left|\sum_{\underset{{\scriptstyle n\equiv N\,\mod\,q}}{n<N}}\Lambda(n)-\frac{N}{\varphi\left(q\right)}\right|\ll_{A}\frac{N}{\log\left(N\right)^{A}}\label{eq:EH senza max-1}
\end{equation}
for every sufficiently large natural number $N$ and dEH$_{\mu,\log}\left(N^{1-\theta}\right)$
holds for the same $\theta$. Then, using the same notations of Theorem
\ref{thm:GB proof}, we have
\[
R\left(N\right):=\sum_{n<N}\Lambda\left(n\right)\Lambda\left(N-n\right)\geq N\mathfrak{S}\left(N\right)\left(1-\mathcal{A}_{N}\right)+O_{A,\theta}\left(\frac{N}{\log\left(N\right)^{A}}\right).
\]
Moreover, the assertions $R\left(N\right)\sim N\mathfrak{S}\left(N\right)$
and $\sum_{n<N}\Lambda(n)\mu\left(N-n\right)=o\left(N\right)$ are
equivalent.
\end{thm}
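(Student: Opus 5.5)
The plan is to follow the Huang--Li argument of \cite{HL2020} step by step. The one change is that every place where they invoke EH$\left(N^{\theta}\log\left(N\right)^{C}\right)$ or EH$_{\mu}\left(N^{1-\theta}\right)$ with a maximum over $y$ and over residue classes is replaced by the diagonal hypotheses: (\ref{eq:EH senza max-1}), and dEH$_{\mu,\log}\left(N^{1-\theta}\right)$ with both $g=\mu$ and $g=\mu\log$.

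\textbf{Step 1: the basic decomposition.} Start from $\Lambda(m)=-\sum_{d\mid m}\mu(d)\log d$ with $m=N-n$, so that
\[
R\left(N\right)=-\sum_{n<N}\Lambda(n)\sum_{d\mid N-n}\mu(d)\log d .
\]
Split the inner sum at $D:=N^{\theta}\log\left(N\right)^{C}$.

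\textbf{Step 2: small divisors $d\leq D$.} Interchange the sums, so that the condition becomes $n\equiv N\bmod d$ with $n<N$.
\begin{itemize}
\item If $(d,N)>1$, then $n\equiv N\bmod d$ forces $n$ to share a prime with $N$. Hence $n$ is a power of one of the $\omega(N)\ll\log N$ primes dividing $N$. Summing over $d\le D$, this contributes at most $O\left(D\log^{3}N\right)$. This is negligible provided $\theta<1$, which we may assume.
\item If $(d,N)=1$, the sum is exactly the diagonal quantity controlled by (\ref{eq:EH senza max-1}). We replace $\sum_{n\equiv N\bmod d}\Lambda(n)$ by $N/\varphi(d)$ at total cost $\sum_{d\le D}\log d\,|\cdot|\ll N/\log\left(N\right)^{A}$, after enlarging $A$.
\end{itemize}
The resulting main term $-N\sum_{d\le D,(d,N)=1}\mu(d)\log d/\varphi(d)$ is evaluated exactly as in \cite{HL2020}, by the standard contour or Selberg-type computation, which is insensitive to the change of hypothesis. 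This produces $N\mathfrak{S}(N)$ up to $O\left(N/\log\left(N\right)^{A}\right)$; this is where the exponent $C$ must be taken large.

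\textbf{Step 3: large divisors $d>D$.} Write $N-n=de$ with $e<N/D=N^{1-\theta}\log\left(N\right)^{-C}$, and use $\sum_{d\mid m,\,d>D}\mu(d)\log d=\sum_{e\mid m,\,e<m/D}\mu(m/e)\log(m/e)$.
\begin{itemize}
\item \emph{Squarefree $m$.} Here $\mu(m/e)=\mu(m)\mu(e)$ and $\log(m/e)=\log m-\log e$. The large-divisor piece becomes
\[
\sum_{e<N/D}\mu(e)\sum_{\substack{n\equiv N\bmod e\\ n<N}}\Lambda(n)\left(\mu(N-n)\log(N-n)-\mu(N-n)\log e\right).
\]
The truncation $e<(N-n)/D$ versus $e<N/D$ is handled by discarding the range $N-n<N/\log\left(N\right)^{B}$ trivially.
\item \emph{Reduction to the diagonal hypotheses.} Terms with $(e,N)>1$ are again negligible, as in Step 2. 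For $(e,N)=1$ we apply dEH$_{\mu,\log}\left(N^{1-\theta}\right)$ twice, once with $g=\mu\log$ and once with $g=\mu$, the latter weighted by $\log e\le\log N$. This replaces each inner sum by $\frac{1}{\varphi(e)}\sum_{n<N}\Lambda(n)g(N-n)$.
\end{itemize}
This is precisely the point where \cite{HL2020} needed $\max_{y\le N}$, to pass from $\mu$ to $\mu\log$ by partial summation. In our setting the log-weighted case is assumed directly, which is why the conjecture carries both choices of $g$.

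\textbf{Step 4: the remaining terms and the conclusion.} The new main terms have the shape $\left(\sum_{e}\mu(e)/\varphi(e)\right)\cdot\sum_{n<N}\Lambda(n)\mu(N-n)\log(N-n)$, together with the analogous term carrying $\mu(e)\log e/\varphi(e)$.
\begin{itemize}
\item The non-squarefree $m$ produce the correction whose positivity analysis in \cite{HL2020} yields the factor $\left(1-\mathcal{A}_{N}\right)$ in the lower bound. I will reproduce that analysis verbatim, since it involves only the arithmetic of $m$ and not the distribution hypotheses.
\item For the equivalence, the same computation shows that $R(N)-N\mathfrak{S}(N)$ equals a bounded nonzero multiple of $\sum_{n<N}\Lambda(n)\mu(N-n)$, up to $o(N)$. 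The $\log(N-n)$ weight is converted using $\log(N-n)=\log N+O(1)$ outside a short range near $n=N$, which is handled trivially.
\end{itemize}

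\textbf{Expected obstacle.} I expect the main difficulty to be Step 3: checking that all the bookkeeping in \cite{HL2020} goes through using only the residue class $n\equiv N\bmod e$, the moduli $(e,N)=1$ and the single endpoint $y=N$. In particular, the passage from $\mu(m/e)\log(m/e)$ to expressions in $g(N-n)$ must never require a sum cut at $y<N$. Where such a cut appears (for instance in the truncation $e<(N-n)/D$), I would first try to remove it by discarding a short range of $n$ at trivial cost, rather than by partial summation.
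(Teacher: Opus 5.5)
Your route is the paper's: Huang--Li with the hypotheses swapped, using dEH$_{\mu,\log}$ once with $g=\mu$ and once with $g=\mu\log$. However, the step you flag as the obstacle fails in the way you propose to handle it. In Step 3 the condition $d>D$ is $e<(N-n)/D$, i.e.\ $n<N-eD$. Replacing it by $e<N/D$ adds the divisors $e\in[(N-n)/D,N/D)$ for \emph{every} $n$, not only for $N-n$ small. Equivalently, it adds the complementary divisors $d=(N-n)/e\in((N-n)D/N,D]$. The added quantity is
\[
\sum_{n<N}\Lambda(n)\mu(N-n)^{2}\sum_{\substack{d\mid N-n\\ (N-n)D/N<d\le D}}\mu(d)\log d=\sum_{d\le D}\mu(d)\log d\sum_{\substack{N-dN/D<n<N\\ n\equiv N\bmod d}}\Lambda(n)\mu(N-n)^{2}.
\]
Trivially this is only $O(N\log(N)^{2})$. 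The range $N-n\asymp N$ alone, where $d$ runs over a window $(cD,D]$, already has a trivial bound of that size. So discarding $N-n<N/\log(N)^{B}$ achieves nothing.

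The quantity is small only through cancellation in $\mu(d)$, after the inner sums are replaced by their expected values. That replacement needs sums over $n$ cut at $y=N-dN/D<N$, or on the twisted side at $y=N-eD$. This is precisely what (\ref{eq:EH senza max-1}) and dEH$_{\mu,\log}$ do not provide. The paper does not prove this step either: it takes $S_{2}(\alpha)=S_{3}(\alpha)-S_{4}(\alpha)+O(N/\log(N)^{A})$ from HL2020, whose hypotheses contain $\max_{y}$. There $S_3,S_4$ have the uniform cutoff $k<(N-1)/\alpha$ and the full range $n<N$.

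To make your Step 3 honest, split at $d\le D(N-n)/N$ instead of $d\le D$. The large divisors then satisfy $e<N/D$ uniformly, and the twisted sums are exactly of dEH shape. The price is that the cut moves to the classical side. There you need a $\max_{y}$ form of EH (Bombieri--Vinogradov suffices when $\theta<1/2$), not just (\ref{eq:EH senza max-1}).

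Separately, Step 4 misplaces the source of $1-\mathcal{A}_{N}$. In the paper, $\mu(N-n)^{2}$ is inserted at the start at cost $O(N^{1/2}\log(N)^{3})$, and the argument yields
\[
R(N)=N\mathfrak{S}(N)-\mathfrak{S}(N)\sum_{n<N}\Lambda(n)\mu(N-n)+O\Bigl(\frac{N}{\log(N)^{A}}\Bigr).
\]
The lower bound then comes from $\mu\le\mu^{2}$ together with $\sum_{n<N}\Lambda(n)\mu(N-n)^{2}=N\mathcal{A}_{N}+O(N/\log(N)^{A})$ for even $N$. The latter follows by expanding $\mu^{2}$ over $b^{2}\mid N-n$.

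In your version without $\mu^{2}$, the non-squarefree $N-n$ leave a term in the large-divisor piece:
\[
\sum_{n<N,\,\mu(N-n)=0}\Lambda(n)\sum_{d\mid N-n,\,d\le D}\mu(d)\log d .
\]
It must be shown negligible by the $b^{2}$-moduli argument the paper uses for $S_{1}(\alpha)$. That argument involves moduli $[b^{2},d]\le DB^{2}$, and this is what the extra $\log(N)^{C}$ in the level is for. It is not needed for the evaluation of $\sum_{d\le D}\mu(d)\log d/\varphi(d)$, contrary to your Step 2. This non-squarefree term does not produce the factor $1-\mathcal{A}_{N}$.
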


\begin{proof}
Fix $0<\theta<1$ and let $\alpha=N^{\theta}.$ Following \cite{HL2020},
we write
\[
R\left(N\right)=S_{1}\left(\alpha\right)+S_{2}\left(\alpha\right)+O\left(N^{1/2}\log\left(N\right)^{3}\right)
\]
and
\[
S_{1}\left(\alpha\right):=\sum_{n<N}\Lambda\left(n\right)\mu\left(N-n\right)^{2}\sum_{\underset{{\scriptstyle d\leq\alpha}}{d\mid(N-n)}}\mu\left(d\right)\log\left(\frac{1}{d}\right)
\]
\[
S_{2}\left(\alpha\right):=\sum_{n<N}\Lambda\left(n\right)\mu\left(N-n\right)^{2}\sum_{\underset{{\scriptstyle d>\alpha}}{d\mid(N-n)}}\mu\left(d\right)\log\left(\frac{1}{d}\right).
\]
Let us study $S_{1}\left(\alpha\right)$. From the identity
\[
\mu\left(N-n\right)^{2}=\sum_{b^{2}\vert(N-n)}\mu\left(b\right)
\]
we have
\[
S_{1}\left(\alpha\right)=\sum_{d\leq\alpha}\mu\left(d\right)\log\left(\frac{1}{d}\right)\sum_{\underset{{\scriptstyle \left[b^{2},d\right]<N}}{b<\sqrt{N}}}\mu\left(b\right)\sum_{\underset{{\scriptstyle n\equiv N\mod\left(\left[b^{2},d\right]\right)}}{n<N}}\Lambda\left(n\right).
\]
Following section $3.1$ of \cite{HL2020} we, split the sum to $b>B:=\log\left(N\right)^{A+4}$and
$b\leq B$. This lead to
\[
S_{1}\left(\alpha\right)=N\sum_{d\leq\alpha}\frac{\mu\left(d\right)\log\left(1/d\right)}{\varphi\left(d\right)}\sum_{\underset{{\scriptstyle \left(bd,N\right)=1}}{b\leq B}}\frac{\mu\left(b\right)\varphi\left(\left(b,d\right)\right)}{b\varphi\left(b\right)}
\]
\[
+E_{1}\left(\alpha\right)+O_{A}\left(\frac{N}{\log\left(N\right)^{A}}\right),
\]
where
\[
E_{1}\left(\alpha\right)\ll\log\left(N\right)\sum_{z\leq\alpha B^{2}}\tau_{3}\left(z\right)\left|\sum_{\underset{{\scriptstyle n\equiv N\mod z}}{n<N}}\Lambda\left(n\right)-\frac{N}{\varphi\left(z\right)}\right|
\]
\[
\ll N^{1/2}\log\left(N\right)^{3/2}\left(\sum_{z\leq\alpha B^{2}}\frac{\tau_{3}\left(z\right)^{2}}{z}\right)^{1/2}\left(\sum_{z\leq\alpha B^{2}}\left|\sum_{\underset{{\scriptstyle n\equiv N\mod z}}{n<N}}\Lambda\left(n\right)-\frac{N}{\varphi\left(z\right)}\right|\right)^{1/2}
\]
by Cauchy-Schwarz and from the trivial bound
\[
\left|\sum_{\underset{{\scriptstyle n\equiv N\mod z}}{n<N}}\Lambda\left(n\right)-\frac{N}{\varphi\left(z\right)}\right|\ll\frac{N\log\left(N\right)}{z},
\]
 where
\[
\tau_{3}\left(z\right):=\sum_{abc=z}1.
\]
Since, by partial summation, we have
\[
\sum_{z\leq\alpha B^{2}}\frac{\tau_{3}\left(z\right)^{2}}{z}\ll\log\left(N\right)^{9}
\]
(see section $3.1$ in \cite{HL2020}, p. $338$ for more details)
it remains to evaluate 
\[
\sum_{z\leq\alpha B^{2}}\left|\sum_{\underset{{\scriptstyle n\equiv N\mod z}}{n<N}}\Lambda\left(n\right)-\frac{N}{\varphi\left(z\right)}\right|\ll_{A}\frac{N}{\log\left(N\right)^{2A+12}}
\]
by (\ref{eq:EH senza max-1}). So, from the standard estimate for
$\mathfrak{S}\left(N\right)$, we get
\[
S_{1}\left(\alpha\right)=N\mathfrak{S}\left(N\right)+O\left(\frac{N}{\log\left(N\right)^{A}}\right).
\]
Continuing to follow \cite{HL2020}, we split $S_{2}\left(\alpha\right)$
in two parts
\[
S_{2}\left(\alpha\right)=S_{3}\left(\alpha\right)-S_{4}\left(\alpha\right)+O\left(\frac{N}{\log\left(N\right)^{A}}\right)
\]
where
\[
S_{3}\left(\alpha\right):=\sum_{\underset{{\scriptstyle \left(k,N\right)=1}}{k<(N-1)/\alpha}}\mu\left(k\right)\log\left(k\right)\sum_{\underset{{\scriptstyle n\equiv N\mod k}}{n<N}}\Lambda\left(n\right)\mu\left(N-n\right)
\]
and 
\[
S_{4}\left(\alpha\right):=\sum_{\underset{{\scriptstyle \left(k,N\right)=1}}{k<(N-1)/\alpha}}\mu\left(k\right)\sum_{\underset{{\scriptstyle n\equiv N\mod k}}{n<N}}\Lambda\left(n\right)\mu\left(N-n\right)\log\left(N-n\right).
\]

We analyze $S_{3}\left(\alpha\right).$ We have
\[
S_{3}\left(\alpha\right)=\sum_{\underset{{\scriptstyle \left(k,N\right)=1}}{k<(N-1)/\alpha}}\frac{\mu\left(k\right)\log\left(k\right)}{\varphi\left(k\right)}\sum_{n<N}\Lambda\left(n\right)\mu\left(N-n\right)+E_{3}\left(\alpha\right)
\]
where
\[
E_{3}\left(\alpha\right):=\sum_{\underset{{\scriptstyle \left(k,N\right)=1}}{k<(N-1)/\alpha}}\mu\left(k\right)\log\left(k\right)\left(\sum_{\underset{{\scriptstyle n\equiv N\mod k}}{n<N}}\Lambda\left(n\right)\mu\left(N-n\right)-\frac{1}{\varphi\left(k\right)}\sum_{n<N}\Lambda\left(n\right)\mu\left(N-n\right)\right)
\]
(see \cite{HL2020}, p. $340$) hence by Conjecture \ref{conj:DiagonalEH}
with $g(n)=\mu\left(n\right)$ we get
\[
E_{3}\left(\alpha\right)\ll_{A}\frac{N}{\log\left(N\right)^{A}}
\]
and from the standard estimate
\[
\sum_{\underset{{\scriptstyle \left(k,N\right)=1}}{k<R}}\frac{\mu\left(k\right)\log\left(k\right)}{\varphi\left(k\right)}=-\mathfrak{S}\left(N\right)+O\left(e^{-C\sqrt{\log\left(R\right)}}\right)
\]
we conclude that 
\[
S_{3}\left(\alpha\right)=-\mathfrak{S}\left(N\right)\sum_{n<N}\Lambda\left(n\right)\mu\left(N-n\right)+O_{A}\left(\frac{N}{\log\left(N\right)^{A}}\right)
\]
(see \cite{HL2020}, p. $341$ for more details). With a similar argument
we have that 
\[
S_{4}\left(\alpha\right)=\sum_{\underset{{\scriptstyle \left(k,N\right)=1}}{k<(N-1)/\alpha}}\frac{\mu\left(k\right)}{\varphi\left(k\right)}\sum_{n<N}\Lambda\left(n\right)\mu\left(N-n\right)\log\left(N-n\right)+E_{4}\left(\alpha\right)
\]
where
\[
E_{4}\left(\alpha\right):=\sum_{\underset{{\scriptstyle \left(k,N\right)=1}}{k<(N-1)/\alpha}}\mu\left(k\right)\left(\sum_{\underset{{\scriptstyle n\equiv N\mod k}}{n<N}}\Lambda\left(n\right)\mu\left(N-n\right)\log\left(N-n\right)\right.
\]
\[
\left.-\frac{1}{\varphi\left(k\right)}\sum_{n<N}\Lambda\left(n\right)\mu\left(N-n\right)\log\left(N-n\right)\right)
\]
and again by Conjecture \ref{conj:DiagonalEH} with $g(n)=\mu\left(n\right)\log\left(n\right)$
we get
\[
E_{4}\left(\alpha\right)\ll_{A}\frac{N}{\log\left(N\right)^{A}}.
\]
 It remains to observe that
\[
\sum_{\underset{{\scriptstyle \left(k,N\right)=1}}{k<(N-1)/\alpha}}\frac{\mu\left(k\right)}{\varphi\left(k\right)}\ll e^{-C\sqrt{\log\left(\left(N-1\right)/\alpha\right)}}
\]
and
\[
\sum_{n<N}\Lambda\left(n\right)\mu\left(N-n\right)\log\left(N-n\right)\ll N\log\left(N\right)^{2}
\]
to get
\[
S_{4}\left(\alpha\right)\ll_{A}\frac{N}{\log\left(N\right)^{A}}
\]
(see \cite{HL2020}, p. $341,342$ for more details). The rest of
the proof is exactly the same of \cite{HL2020}.
\end{proof}
Therefore, using the Conjecture \ref{conj:DiagonalEH} allows us to
effectively estimate the error terms $E_{1}\left(\alpha\right),E_{3}\left(\alpha\right)$
and $E_{4}\left(\alpha\right)$ and thus prove the binary Goldbach's
conjecture in exactly the same way as done in \cite{HL2020}. 

\section{preliminary results, definitions and main tools}

In this section we have to recall several facts and prove some preliminary
results. For this reason, we divide this section further subsections.

\subsection{The main tools}

One of the main tools used to show our theorems is an identity proved
in \cite{CGZ2025}. It can be viewed as a two-dimensional Abel summation
formula. Versions of two-dimensional Abel summation do appear in the
literature (see, e.g, \cite{KT2006} or \cite{BP2018}), but the presented
formulation makes explicit the link between a weighted average involving
two arithmetic functions and the Laplace convolution of the corresponding
unweighted averages taken separately. This point of view is particularly
useful, since it allows us to combine explicit formulae for the individual
averages, whenever available, and thereby derive an explicit formula
for the original weighted average. To the best of our knowledge, Abel
summation has not previously been formulated in exactly this way in
the literature.
\begin{thm}
\label{thm:MAINCGZ}Let $g_{1},g_{2}$ be arithmetical functions,
$\eta\in\mathbb{R}^{+}$, $f:\mathbb{R}\rightarrow\mathbb{C}$, and
assume that:

1) $f$ has its support in $\left[\alpha,\beta\right),\,0\leq\alpha<\beta$,
and $\beta\in\mathbb{R}$

2) $f\in C^{1}\left(\alpha,\beta\right)$.

3) $f^{\prime}\in AC\left(\alpha,\beta\right)$.

Then, if 
\[
G_{j}\left(x\right):=\begin{cases}
\sum_{n\leq x}g_{j}\left(n\right), & x>0\\
0, & \text{otherwise}
\end{cases}
\]
for $j=1,2$, we get
\[
\sum_{\eta\alpha<n\leq\eta\beta}\sum_{m\leq\eta\beta-n}g_{2}\left(m\right)g_{1}\left(n\right)f\left(\frac{m+n}{\eta}\right)=G_{2}\left(\eta\alpha\right)\int_{\alpha}^{\beta}G_{1}\left(\eta v-\eta\alpha\right)f^{\prime}\left(v\right)dv
\]
\[
+\frac{1}{\eta}\int_{\alpha}^{\beta}f^{\prime\prime}\left(w\right)\int_{\eta\alpha}^{\eta w}G_{2}\left(s\right)G_{1}\left(\eta w-s\right)dsdw.
\]
\end{thm}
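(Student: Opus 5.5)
The plan is to apply one-dimensional Abel summation twice, once in each variable, and then integrate by parts once more so that the result is a Laplace convolution of $G_{1}$ and $G_{2}$. Throughout we read hypotheses 1)--3) as also giving the one-sided boundary values $f(\alpha^{+})$, $f(\beta^{-})$, $f^{\prime}(\alpha^{+})$, $f^{\prime}(\beta^{-})$, with $f(\beta^{-})=0$ because $\text{supp}f\subset[\alpha,\beta)$. The bookkeeping of these boundary values is where most of the care goes.

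\emph{Step 1 (collapse one variable).} Write $x=m+n$ and note that $f\left(x/\eta\right)\neq0$ forces $\eta\alpha\leq x<\eta\beta$. For $\eta\alpha<x\leq\eta\beta$ we use $f\left(x/\eta\right)=-\int_{x/\eta}^{\beta}f^{\prime}(v)\,dv$, which is where $f(\beta^{-})=0$ enters. Substituting this into the double sum and exchanging sum and integral (finite sums, absolutely convergent integrals), the left-hand side becomes
\[
-\int_{\alpha}^{\beta}f^{\prime}(v)\sum_{\substack{\eta\alpha<n,\ m\geq1\\ n+m\leq\eta v}}g_{1}(n)g_{2}(m)\,dv,
\]
where the constraint $m\leq\eta\beta-n$ is absorbed automatically since $v<\beta$. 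The inner sum is the hyperbolic-type counting function $\sum_{\eta\alpha<n\leq\eta v}g_{1}(n)G_{2}(\eta v-n)$. We then split off the boundary contribution at the cutoff $\eta\alpha$. Depending on which variable carries the cutoff, we may relabel the summation so that the constraint reads ``the $G_{2}$-variable is at least $\eta\alpha$''; this matches the integration range $\int_{\eta\alpha}^{\eta w}$ in the statement. We fix this relabeling at this point.

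\emph{Step 2 (convolution identity).} The key lemma to verify is that, for $x>\eta\alpha$,
\[
\frac{d}{dx}\int_{\eta\alpha}^{x}G_{2}(s)G_{1}(x-s)\,ds=\sum_{\substack{n+m\leq x\\ \text{cutoff at }\eta\alpha}}g_{1}(n)g_{2}(m)\;-\;G_{2}(\eta\alpha)\,G_{1}(x-\eta\alpha)
\]
holds almost everywhere, with the signs to be fixed by the convention chosen in Step 1. To prove it, we write $G_{1}(x-s)=\sum_{n\leq x-s}g_{1}(n)$, interchange to obtain $\sum_{n}g_{1}(n)\int_{\eta\alpha}^{x-n}G_{2}(s)\,ds$, and differentiate in $x$. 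The map $x\mapsto\int_{\eta\alpha}^{x}G_{2}(s)G_{1}(x-s)\,ds$ is Lipschitz on compacts, so the fundamental theorem of calculus applies. Evaluating the derivative produces the counting function together with exactly the term $G_{2}(\eta\alpha)G_{1}(x-\eta\alpha)$ coming from the lower endpoint. Feeding this into Step 1 yields the term $G_{2}(\eta\alpha)\int_{\alpha}^{\beta}G_{1}(\eta v-\eta\alpha)f^{\prime}(v)\,dv$ directly, plus
\[
\int_{\alpha}^{\beta}f^{\prime}(v)\frac{d}{dv}\left[\frac{1}{\eta}\int_{\eta\alpha}^{\eta v}G_{2}(s)G_{1}(\eta v-s)\,ds\right]dv,
\]
where the factor $1/\eta$ comes from the chain rule.

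\emph{Step 3 (final integration by parts).} Since $f^{\prime}\in AC$ and the bracket is absolutely continuous in $v$, we integrate by parts once more to obtain $\frac{1}{\eta}\int_{\alpha}^{\beta}f^{\prime\prime}(w)\int_{\eta\alpha}^{\eta w}G_{2}(s)G_{1}(\eta w-s)\,ds\,dw$ plus boundary terms. The boundary term at $w=\alpha$ vanishes because the inner integral is over an empty range. The term at $w=\beta$ involves $f^{\prime}(\beta^{-})$ times the convolution at $\eta\beta$, and it must be checked to cancel against, or be absorbed by, the remaining pieces.

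I expect this last boundary accounting to be the main obstacle: matching the signs between the $f^{\prime}$ and $f^{\prime\prime}$ representations, and checking that the endpoint contributions at $\beta$ and at the cutoff $\eta\alpha$ reduce exactly to the single term $G_{2}(\eta\alpha)\int G_{1}f^{\prime}$. To settle this I would first test the identity on $f=\mathbf{1}_{[\alpha,\beta)}\cdot(\beta-v)^{2}$, where everything can be computed explicitly. That computation fixes the conventions, after which I would run the general argument above.
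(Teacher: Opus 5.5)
The route you choose is the natural one: write $f(x/\eta)=-\int_{x/\eta}^{\beta}f^{\prime}$, recognise the counting function as the derivative of $K(x):=\int_{\eta\alpha}^{x}G_{2}(s)G_{1}(x-s)\,ds$, and integrate by parts once more. The paper quotes the theorem without proof, so there is nothing of its own to compare with. The step that fails is the ``relabeling'' at the end of Step 1. The cutoff $n>\eta\alpha$ on the left is attached to $g_{1}$, and renaming dummy variables carries $g_{1}$ along, so you cannot turn it into a cutoff on the $G_{2}$-variable.

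Your own Step 2 shows where the cutoff really lands. Differentiating $\sum_{n}g_{1}(n)\int_{\eta\alpha}^{x-n}G_{2}(s)\,ds$ gives $\sum_{n\leq x-\eta\alpha}g_{1}(n)G_{2}(x-n)$. Split the pairs $(n,m)$ according to $m\leq\eta\alpha$ (where the only constraint is $n\leq x-\eta\alpha$) or $m>\eta\alpha$ (where it is $m+n\leq x$). This gives, for a.e. $x>\eta\alpha$,
\[
K^{\prime}(x)=G_{2}(\eta\alpha)G_{1}(x-\eta\alpha)+\sum_{m>\eta\alpha,\ m+n\leq x}g_{1}(n)g_{2}(m),
\]
with a plus sign in front of the boundary term, not a minus.

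Running Steps 1 and 3 with this (granting $f^{\prime}(\beta^{-})=0$, see below) shows that the right-hand side of the statement equals $\sum_{m>\eta\alpha,\,m+n\leq\eta\beta}g_{1}(n)g_{2}(m)f\left(\frac{m+n}{\eta}\right)$. The cutoff therefore sits on the $g_{2}$-variable, and for $\alpha>0$ this is not the left-hand side in general. Take $\eta=1$, $\alpha=3/2$, $\beta=10$, $g_{1}$ the indicator of $\{2\}$, $g_{2}$ the indicator of $\{1\}$, and $f$ a smooth bump supported in $[2,5]$ with $f(3)\neq0$. The left side is $f(3)$, while the right side is $-f(7/2)+f(7/2)=0$. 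So no choice of sign conventions closes this step. What your argument proves is the case $\alpha=0$, which is the only case used later in the paper: there $G_{2}(0)=0$ and both cutoffs are vacuous. It also proves the general case with $G_{1}$ and $G_{2}$ interchanged on the right-hand side.

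The second gap is the endpoint $w=\beta$ in Step 3. With the correct signs, Step 1 produces $-\int_{\alpha}^{\beta}f^{\prime}(v)\frac{d}{dv}\left[\frac{1}{\eta}K(\eta v)\right]dv$; your display has the opposite sign. Integrating by parts gives $\frac{1}{\eta}\int_{\alpha}^{\beta}f^{\prime\prime}(w)K(\eta w)\,dw-\frac{1}{\eta}f^{\prime}(\beta^{-})K(\eta\beta)$, and the term at $\alpha$ vanishes since $K(\eta\alpha)=0$. Every other piece is already matched at this point, so the term at $\beta$ has nothing to cancel against. It must vanish by itself, which means you need $f^{\prime}(\beta^{-})=0$; the paper explicitly takes this as implied by the hypotheses. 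This is a genuine requirement, not bookkeeping. For $\alpha=0$, $\eta=1$ and $f(x)=\beta-x$ on $[0,\beta)$, one has $f^{\prime\prime}\equiv0$, so the right side is $0$. The left side is $\sum_{n+m\leq\beta}g_{1}(n)g_{2}(m)(\beta-n-m)$, which is nonzero in general. Add $f^{\prime}(\beta^{-})=0$ to the boundary values you fix at the start, alongside $f(\beta^{-})=0$.
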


Observe that, essentially, we require that $f_{\vert\left(\alpha,\beta\right)}$
belongs to the Sobolev spaces $W^{2,1}\left(\alpha,\beta\right)$.
We recall the definition, just for completeness: if $I$ is an open
interval, then
\[
W^{1,p}\left(I\right):=\left\{ f\in L^{p}\left(I\right):\,\exists g\in L^{p}\left(I\right):\int_{I}fh^{\prime}=-\int_{I}gh,\,\forall h\in C_{c}^{1}\left(I\right)\right\} 
\]
where $C_{c}^{1}\left(I\right)$ are the function $C^{1}\left(I\right)$
with compact support in $I$, and
\[
W^{m,p}\left(I\right):=\left\{ f\in W^{m-1,p}\left(I\right):f^{\prime}\in W^{m-1,p}\left(I\right)\right\} ,\,m\in\mathbb{N},\,m\ge2
\]
(see, e.g., \cite{B2011}, chapt. $8)$. We recall that is possible
to extend the results of the previous theorem also in the case $\beta=+\infty$
(see \cite{CGZ2025}). Note that the hypotheses of the Theorem imply
that $f\left(\alpha^{+}\right),f^{\prime}\left(\alpha^{+}\right)$
exists and are finite (but not necessary equal to $0$) and $f\left(\beta^{-}\right)=f^{\prime}\left(\beta^{-}\right)=0$. 

In order to work with more general weights, we propose now a discrete
version of the previous theorem. Firstly, we recall the definition
of $r$-th order forward difference, with $r\in\mathbb{N}^{+}$, as
\[
\Delta_{h}^{r}\left(f,x\right):=\sum_{k=0}^{r}\binom{r}{k}\left(-1\right)^{r-k}f\left(x+hk\right),\,x,h\in\mathbb{R},
\]
\[
\Delta_{h}\left(f,x\right):=\Delta_{h}^{1}\left(f,x\right)
\]
and the property
\begin{equation}
\Delta_{h}^{r}=\Delta_{h}\left[\Delta_{h}^{r-1}\right]\label{eq:prod fow diff}
\end{equation}
(see. e.g., \cite{DVL1993}, chapt. $7$).
\begin{thm}
\label{thm:MainDiscrete}Let $g_{1},g_{2}$ be arithmetical functions,
let $f:\mathbb{R}\rightarrow\mathbb{C}$ and assume that:

1) f has its support in $[\alpha,\beta)$, $0\leq\alpha<\beta$$.$ 

2) $G_{1}(1)=G_{1}(0)=G_{2}(0)=0.$

Then, taking $\eta\in\mathbb{R}^{+}$ such that $\eta\beta\in\mathbb{N}^{+},\,\eta\alpha\in\left\{ 0,1\right\} $
we have
\[
\sum_{\eta\alpha<n\leq\eta\beta}\sum_{m\leq\eta\beta-n}g_{2}\left(m\right)g_{1}\left(n\right)f\left(\frac{m+n}{\eta}\right)=\sum_{k=2}^{\eta\beta-1}\left(\sum_{u=0}^{k}G_{1}\left(u\right)G_{2}\left(k-u\right)\right)\Delta_{1/\eta}^{2}\left(f,\frac{k}{\eta}\right).
\]
\end{thm}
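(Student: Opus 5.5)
The plan is to reduce the double sum to a one-dimensional sum over $k=n+m$ and then apply Abel (discrete) summation by parts twice, mirroring the two integrations by parts behind Theorem \ref{thm:MAINCGZ}. Write $M:=\eta\beta\in\mathbb{N}^{+}$, $a:=\eta\alpha\in\{0,1\}$ and $F(k):=f(k/\eta)$. Then
\[
\Delta_{1/\eta}^{2}\left(f,\frac{k}{\eta}\right)=F(k)-2F(k+1)+F(k+2).
\]
Since $f$ is supported in $[\alpha,\beta)$, we have $F(k)=0$ for every integer $k\geq M$.

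\textbf{Step 1: collapse to a single sum.} The left-hand side runs over pairs with $n>a$, $m\geq1$, $n+m\leq M$. Because $G_{1}(0)=G_{1}(1)=0$ we get $g_{1}(1)=0$, so when $a=1$ the condition $n>a$ can be replaced by $n\geq1$ without changing the sum. Extending $g_{1},g_{2}$ by $0$ at the integer $0$ (consistent with $G_{1}(0)=G_{2}(0)=0$), the left-hand side becomes
\[
\sum_{k=0}^{M}c(k)F(k),\qquad c(k):=\sum_{u=0}^{k}g_{1}(u)g_{2}(k-u).
\]

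\textbf{Step 2: identify $c$ as a second backward difference.} Put $H(k):=\sum_{u=0}^{k}G_{1}(u)G_{2}(k-u)$ for $k\geq0$ and $H(k):=0$ for $k<0$. I would check, either by the generating-function identity $\sum_{k}G_{j}(k)x^{k}=\bigl(\sum_{k}g_{j}(k)x^{k}\bigr)/(1-x)$, or directly from $G_{j}(k)-G_{j}(k-1)=g_{j}(k)$, that
\[
c(k)=H(k)-2H(k-1)+H(k-2)\qquad\text{for all }k\geq0.
\]
In the direct approach, one first shows that $H(k)-H(k-1)=\sum_{u}g_{1}(u)G_{2}(k-u)$, and then differences once more. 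I also note that $H(0)=G_{1}(0)G_{2}(0)=0$ and $H(1)=G_{1}(0)G_{2}(1)+G_{1}(1)G_{2}(0)=0$.

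\textbf{Step 3: summation by parts twice and boundary bookkeeping.} Substitute the formula for $c(k)$ into $\sum_{k=0}^{M}c(k)F(k)$ and reindex each of the three pieces. This gives
\[
\sum_{j}H(j)\bigl(F(j)-2F(j+1)+F(j+2)\bigr),
\]
together with boundary contributions. These boundary contributions involve only $F(M+1)$ and $F(M+2)$, which vanish, and $H(-1)$ and $H(-2)$, which are $0$.

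It then remains to fix the range of $j$.
\begin{itemize}
\item The terms $j=0,1$ drop out because $H(0)=H(1)=0$.
\item For $j\geq M$ the second difference vanishes, since $F(j)=F(j+1)=F(j+2)=0$.
\item The term $j=M-1$ survives with value $H(M-1)F(M-1)$, which is exactly why the upper limit in the statement is $\eta\beta-1$.
\end{itemize}
This yields the claimed identity.

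The only delicate point, and the step where I expect care to be needed, is this boundary bookkeeping. In particular one must use the hypotheses $G_{1}(0)=G_{1}(1)=G_{2}(0)=0$ in two places: to remove the constraint $n>\eta\alpha$ in Step 1, and to kill the low-index terms $H(0),H(1)$ in Step 3. One must also use $f(\beta)=0$, which follows from the half-open support $[\alpha,\beta)$, to kill the high-index terms.
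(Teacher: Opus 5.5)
Your proof is correct, but it is organized differently from the paper's. The paper never forms the additive convolution $c(k)=\sum_{u}g_{1}(u)g_{2}(k-u)$. It sums by parts first in the inner variable $m$; the boundary term dies because $f(\beta)=0$, leaving $C(n)$. It then sums by parts in the outer variable $n$; the boundary terms die because $C(\eta\beta)=0$ and $G_{1}(\eta\alpha)=0$, which is where $\eta\alpha\in\{0,1\}$ is used. This gives $\sum_{n}G_{1}(n)\sum_{m}G_{2}(m)\Delta_{1/\eta}^{2}(f,(m+n)/\eta)$, and only then does the paper substitute $k=m+n$, so that $\sum_{u}G_{1}(u)G_{2}(k-u)$ arises by reindexing.

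You instead collapse to the single variable $k$ at the outset. All the content then sits in the identity $c(k)=H(k)-2H(k-1)+H(k-2)$, equivalently $\sum_{k}H(k)x^{k}=(1-x)^{-2}\sum_{k}c(k)x^{k}$, after which summation by parts in $k$ alone finishes.

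Your route makes the boundary bookkeeping one-dimensional and gives each hypothesis a visible role: $g_{1}(1)=0$ to drop $n>\eta\alpha$, $H(0)=H(1)=0$ at the bottom, and $F(M)=F(M+1)=F(M+2)=0$ at the top. It also makes explicit that $\sum_{u}G_{1}(u)G_{2}(k-u)$ is a discrete second antiderivative of $g_{1}*g_{2}$. This is the discrete counterpart of the Laplace convolution in Theorem~\ref{thm:MAINCGZ} being a second antiderivative of $\mathrm{d}G_{1}*\mathrm{d}G_{2}$. It also extends directly to iterated partial sums paired with higher-order differences.

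The paper's route needs no convolution lemma and passes through the uncollapsed two-variable form. The price is tracking boundary terms in both variables, including the extension of the inner $m$-range, which uses $f(\beta+1/\eta)=0$.
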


\begin{proof}
By classical summation by parts and since $f(\beta)=f\left(\beta+\frac{1}{\eta}\right)=0$,
because $f$ has its support in $[\alpha,\beta)$, we have
\[
\sum_{m\leq\eta\beta-n}g_{2}\left(m\right)f\left(\frac{m+n}{\eta}\right)=-\sum_{m\leq\eta\beta-n-1}G_{2}(m)\left[f\left(\frac{m+n+1}{\eta}\right)-f\left(\frac{m+n}{\eta}\right)\right]
\]
and so defining 
\[
C(n):=-\sum_{m\leq\eta\beta-n-1}G_{2}(m)\left[f\left(\frac{m+n+1}{\eta}\right)-f\left(\frac{m+n}{\eta}\right)\right]
\]
we have, again by partial summation
\[
\sum_{\eta\alpha<n\leq\eta\beta}g_{1}\left(n\right)\sum_{m\leq\eta\beta-n}g_{2}\left(m\right)f\left(\frac{m+n}{\eta}\right)=G_{1}\left(\eta\beta\right)C\left(\eta\beta\right)-G_{1}\left(\eta\alpha\right)C\left(\eta\alpha+1\right)
\]
\[
-\sum_{\eta\alpha<n\leq\eta\beta-1}G_{1}\left(n\right)\left(C\left(n+1\right)-C\left(n\right)\right).
\]
Now clearly
\[
C\left(\eta\beta\right)=G_{1}\left(\eta\alpha\right)=0
\]
hence
\[
\sum_{\eta\alpha<n\leq\eta\beta}g_{1}\left(n\right)\sum_{m\leq\eta\beta-n}g_{2}\left(m\right)f\left(\frac{m+n}{\eta}\right)=-\sum_{\eta\alpha<n\leq\eta\beta-1}G_{1}\left(n\right)\left(C\left(n+1\right)-C\left(n\right)\right)
\]
\[
=\sum_{\eta\alpha<n\leq\eta\beta-1}G_{1}\left(n\right)\sum_{m\leq\eta\beta-n-2}G_{2}(m)\left[f\left(\frac{m+n+2}{\eta}\right)-f\left(\frac{m+n+1}{\eta}\right)\right]
\]
\[
-\sum_{\eta\alpha<n\leq\eta\beta-1}G_{1}\left(n\right)\sum_{m\leq\eta\beta-n-1}G_{2}(m)\left[f\left(\frac{m+n+1}{\eta}\right)-f\left(\frac{m+n}{\eta}\right)\right]
\]
\[
=\sum_{\eta\alpha<n\leq\eta\beta-1}G_{1}\left(n\right)\sum_{m\leq\eta\beta-n-1}G_{2}(m)\Delta_{1/\eta}^{2}\left(f,\frac{m+n}{\eta}\right)
\]
then, taking $k=m+n$ and using condition $2)$, we obtain
\[
\sum_{\eta\alpha<n\leq\eta\beta}g_{1}\left(n\right)\sum_{m\leq\eta\beta-n}g_{2}\left(m\right)f\left(\frac{m+n}{\eta}\right)=\sum_{\eta\alpha<k\leq\eta\beta-1}\Delta_{1/\eta}^{2}\left(f,\frac{k}{\eta}\right)\sum_{u=0}^{k}\left(G_{1}(u)G_{2}(k-u)\right)
\]
as wanted.
\end{proof}
The previous result can be proved in a more general settings, but
paying the price of having an identity with a more complicated form.
\begin{cor}
\label{cor:discrete}Under the previous hypotheses, we have
\[
\sum_{\eta\alpha<n\leq\eta\beta}g_{1}\left(n\right)\sum_{m\leq\eta\beta-n}g_{2}\left(m\right)f\left(\frac{m+n}{\eta}\right)=\sum_{\eta\alpha<k\leq\eta\beta-1}\Delta_{1/\eta}^{2}\left(f,\frac{k}{\eta}\right)\int_{0}^{k}G_{1}(s)G_{2}(k-s)ds
\]
\[
+\sum_{\eta\alpha<k\leq\eta\beta-1}\Delta_{1/\eta}^{2}\left(f,\frac{k}{\eta}\right)\sum_{u=0}^{k}G_{1}(u)g_{2}(k-u).
\]
\end{cor}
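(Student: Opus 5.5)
The plan is to start from Theorem \ref{thm:MainDiscrete}, or more precisely from the last line of its proof, which has the sum over $\eta\alpha<k\leq\eta\beta-1$. It then suffices to prove the pointwise identity
\[
\sum_{u=0}^{k}G_{1}(u)G_{2}(k-u)=\int_{0}^{k}G_{1}(s)G_{2}(k-s)ds+\sum_{u=0}^{k}G_{1}(u)g_{2}(k-u)
\]
for every integer $k\geq0$. Multiplying by $\Delta_{1/\eta}^{2}\left(f,\frac{k}{\eta}\right)$ and summing over $k$ then gives the corollary.

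First I check that the two ranges of $k$ agree. For $k=0$ the convolution is $G_{1}(0)G_{2}(0)=0$. For $k=1$ it is $G_{1}(0)G_{2}(1)+G_{1}(1)G_{2}(0)=0$, by condition $2)$. Since $\eta\alpha\in\{0,1\}$, the range $\eta\alpha<k$ and the range $k\geq2$ therefore give the same sum.

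For the pointwise identity, use that $G_{1},G_{2}$ are step functions: $G_{j}$ is constant on $[u,u+1)$ for each integer $u$, and $G_{j}\equiv0$ on $(-\infty,0]$. Split $[0,k]$ into the unit intervals $[u,u+1)$ with $0\leq u\leq k-1$. For $s\in(u,u+1)$ we have $G_{1}(s)=G_{1}(u)$, and $k-s\in(k-u-1,k-u)$, so $G_{2}(k-s)=G_{2}(k-u-1)$. Hence
\[
\int_{0}^{k}G_{1}(s)G_{2}(k-s)ds=\sum_{u=0}^{k-1}G_{1}(u)G_{2}(k-u-1)=\sum_{u=0}^{k}G_{1}(u)G_{2}(k-u-1).
\]
The term $u=k$ can be added because it contains $G_{2}(-1)=0$.

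Subtracting this from the discrete convolution gives
\[
\sum_{u=0}^{k}G_{1}(u)\left[G_{2}(k-u)-G_{2}(k-u-1)\right].
\]
For $u<k$ the bracket equals $g_{2}(k-u)$. For $u=k$ the bracket is $G_{2}(0)-G_{2}(-1)=0$, which matches the convention $g_{2}(0)=0$ implicit in $G_{2}(0)=0$; alternatively, the term $u=k$ can simply be read as absent.

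The only delicate point is this bookkeeping of the boundary terms: the values at $u=k$ and the measure-zero breakpoints, where right-continuity does not affect the integral. Everything else is a direct computation.
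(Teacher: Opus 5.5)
Your proposal is correct and follows the paper's proof: split $[0,k]$ into unit intervals to get $\int_{0}^{k}G_{1}(s)G_{2}(k-s)\,ds=\sum_{u=0}^{k-1}G_{1}(u)G_{2}(k-u-1)$, then subtract this from the discrete convolution to produce the $g_{2}$ term. Your explicit treatment of the $u=k$ term ($G_{2}(0)-G_{2}(-1)=0$) is a harmless refinement, since the paper's proof ends with $\sum_{u=0}^{k-1}$ while the statement is written with $\sum_{u=0}^{k}$.
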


\begin{proof}
It is enough to observe that
\[
\int_{0}^{k}G_{1}(s)G_{2}(k-s)ds=\sum_{u=0}^{k-1}\int_{u}^{u+1}G_{1}(s)G_{2}(k-s)ds=\sum_{u=0}^{k-1}G_{1}(u)G_{2}(k-u-1)
\]
hence
\[
\sum_{u=0}^{k}G_{1}(u)G_{2}(k-u)-\sum_{u=0}^{k-1}G_{1}(u)G_{2}(k-u-1)
\]
\[
=\sum_{u=0}^{k-1}G_{1}(u)\left[G_{2}(k-u)-G_{2}(k-u-1)\right]
\]
\[
=\sum_{u=0}^{k-1}G_{1}(u)g_{2}\left(k-u\right).
\]
\end{proof}

\subsection{H\"older-Zygmund spaces}

As mentioned in the introduction, we are interested in weights defined
by functions belonging to suitable function spaces. In the previous
subsection we considered the case in which the restriction $f_{\vert\left(\alpha,\beta\right)}\in W^{2,1}\left(\alpha,\beta\right)$.
In this subsection, we turn to the second setting under consideration,
namely H\"older-Zygmund spaces. We take this class because the previous
theorems show that we need an effective way to control finite differences
of the weight. We therefore recall the definition of these spaces
and some of their basic properties. Everything will be formulated
on $\mathbb{R},$ since our functions are defined on $\mathbb{R}$,
even when their support is contained in an interval; this is important
because the behavior near the boundary is different.

\begin{defn}
(H\"older-Zygmund space) Let $\delta>0$. We define the H\"older-Zygmund
space as
\[
\mathcal{C}^{\delta}\left(\mathbb{R}\right):=\left\{ f\in L^{\infty}\left(\mathbb{R}\right)\cap C^{0}\left(\mathbb{R}\right):\left\Vert f\right\Vert _{\mathcal{C}^{\delta}\left(\mathbb{R}\right)}<+\infty\right\} 
\]
where
\[
\left\Vert f\right\Vert _{\mathcal{C}^{\delta}\left(\mathbb{R}\right)}:=\left\Vert f\right\Vert _{\infty}+\sup_{x,h\in\mathbb{R},\,h\neq0}\frac{\left|\Delta_{h}^{1}\left(f,x\right)\right|}{h^{\delta}}
\]
if $0<\delta<1$,
\[
\left\Vert f\right\Vert _{\mathcal{C}^{1}\left(\mathbb{R}\right)}:=\left\Vert f\right\Vert _{\infty}+\sup_{x,h\in\mathbb{R},\,h\neq0}\frac{\left|\Delta_{h}^{2}\left(f,x\right)\right|}{h}
\]
if $\delta=1$. If $\delta>1$, say $m<\delta\leq m+1,\,m\in\mathbb{N}$,
then we define the H\"older-Zygmund space as
\[
\mathcal{C}^{\delta}\left(\mathbb{R}\right):=\left\{ f\in L^{\infty}\left(\mathbb{R}\right)\cap C^{m}\left(\mathbb{R}\right):\left\Vert f\right\Vert _{\mathcal{C}^{\delta}\left(\mathbb{R}\right)}<+\infty\right\} 
\]
where the norm is defined recursively as
\[
\left\Vert f\right\Vert _{\mathcal{C}^{\delta}\left(\mathbb{R}\right)}=\left\Vert f\right\Vert _{\mathcal{C}^{\delta-1}\left(\mathbb{R}\right)}+\left\Vert f^{\prime}\right\Vert _{\mathcal{C}^{\delta-1}\left(\mathbb{R}\right)}.
\]
\end{defn}

(see, e.g, \cite{R2022}) Note that $\mathcal{C}^{1}\left(\mathbb{R}\right)$
is also called Zygmund spaces, and if $\delta\notin\mathbb{N}$, $\mathcal{C}^{\delta}\left(\mathbb{R}\right)$
is equivalent to the classical H\"older spaces. Moreover, we have
the following result.
\begin{thm}
\label{thm:finitediff bound}If $f\in L^{\infty}\left(\mathbb{R}\right)\cap C^{0}\left(\mathbb{R}\right)$
and $0<\delta<n,\,n\in\mathbb{N}$, then $f\in\mathcal{C}^{\delta}\left(\mathbb{R}\right)\Leftrightarrow\left|\Delta_{h}^{n}\left(f,x\right)\right|\leq C\left|h\right|^{\delta}$
for all $x,h\in\mathbb{R}$.
\end{thm}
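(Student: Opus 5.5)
The plan is to prove the two implications separately. Throughout, $m\in\mathbb{N}$ is fixed by $m<\delta\le m+1$, and $n>\delta$ is the integer in the statement. Two reductions come first. Since $\Delta_{-h}^{n}(f,x)=(-1)^{n}\Delta_{h}^{n}(f,x-nh)$, it suffices to take $h>0$. For $h\ge1$ both sides are controlled by $\|f\|_{\infty}$, because $|\Delta_{h}^{n}(f,x)|\le2^{n}\|f\|_{\infty}\le2^{n}\|f\|_{\infty}h^{\delta}$. So only $0<h<1$ matters.

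\emph{Direction $(\Rightarrow)$.} Let $f\in\mathcal{C}^{\delta}(\mathbb{R})$, so $f\in C^{m}$.
\begin{enumerate}
\item Use the Peano-type representation
\[
\Delta_{h}^{n}(f,x)=\int_{[0,h]^{m}}\Delta_{h}^{n-m}\bigl(f^{(m)},x+t_{1}+\dots+t_{m}\bigr)\,dt_{1}\cdots dt_{m}.
\]
It follows by induction from $\Delta_{h}(g,x)=\int_{0}^{h}g'(x+t)\,dt$ and from \eqref{eq:prod fow diff}.
\item If $\delta<m+1$, then $n-m\ge1$ and the recursive definition of the norm gives $|\Delta_{h}^{1}(f^{(m)},\cdot)|\le Ch^{\delta-m}$.
\item If $\delta=m+1$, then $n-m\ge2$ and the Zygmund condition gives $|\Delta_{h}^{2}(f^{(m)},\cdot)|\le Ch$.
\item In either case $\Delta^{k}=\Delta^{k-1}\Delta$ gives $|\Delta_{h}^{k}g|\le2^{k-j}\sup|\Delta_{h}^{j}g|$ for $k\ge j$. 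Hence the integrand is $O(h^{\delta-m})$, and integrating over $[0,h]^{m}$ gives $O(h^{\delta})$.
\end{enumerate}

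\emph{Direction $(\Leftarrow)$, step 1: lowering the order.} First reduce to the minimal order $n_{0}$, where $n_{0}=m+1$ if $\delta\notin\mathbb{N}$ and $n_{0}=m+2$ if $\delta=m+1$. Since $\Delta_{2h}=\Delta_{h}(I+T_{h})$, with $T_{h}$ the translation, and $(I+T_{h})^{k}=2^{k}I+\Delta_{h}P_{k}(T_{h})$ for a polynomial $P_{k}$, we get the Marchaud identity
\[
\Delta_{2h}^{k}-2^{k}\Delta_{h}^{k}=\Delta_{h}^{k+1}P_{k}(T_{h}).
\]
Take $k=n-1$ when $n\ge n_{0}+1$, i.e. when $\delta<n-1$ strictly.
\begin{enumerate}
\item The identity gives $|2^{-(j+1)k}\Delta_{2^{j+1}h}^{k}-2^{-jk}\Delta_{2^{j}h}^{k}|\le C2^{-jk}(2^{j}h)^{\delta}$.
\item Summing over $0\le j<J$, where $2^{J}h\asymp1$, and bounding the top term by $\|f\|_{\infty}$, gives $|\Delta_{h}^{n-1}f|\le Ch^{\delta}$.
\item The geometric series converges at the large end precisely because $\delta<k=n-1$.
\end{enumerate}
Induction on $n$ then brings us down to $n=n_{0}$. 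No logarithmic loss occurs, since we never need the case $\delta=n-1$.

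\emph{Direction $(\Leftarrow)$, step 2: regularity at the minimal order.} This is the main obstacle: we must show that $f\in C^{m}$ and that $f^{(m)}$ has the right first (respectively second) difference bound. The plan is a dyadic smoothing argument.
\begin{enumerate}
\item Fix a smooth compactly supported $\varphi$ with $\int\varphi=1$. Define the kernel $K_{t}$ so that
\[
f-f*K_{t}=(-1)^{n+1}\int\varphi(u)\,\Delta_{tu}^{n}(f,x)\,du,
\]
using the binomial expansion of $\Delta^{n}$ in dilates of $\varphi$.
\item This gives $\|f-f*K_{t}\|_{\infty}\le Ct^{\delta}$. Note that $f*K_{t}$ is a combination of convolutions with the dilates $\varphi_{tk}$, and each of these is $C^{\infty}$.
\item Set $f_{j}=f*K_{2^{-j}}$. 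Then $\|f_{j+1}-f_{j}\|_{\infty}\ll2^{-j\delta}$. By Bernstein-type estimates, and because $f_{j+1}-f_{j}$ can be written as a convolution of $f$ against a differenced kernel, this yields $\|(f_{j+1}-f_{j})^{(r)}\|_{\infty}\ll2^{-j(\delta-r)}$.
\item For $r\le m<\delta$, the series $f=f_{0}+\sum_{j}(f_{j+1}-f_{j})$ therefore converges in $C^{m}$. So $f\in C^{m}$, and the derivatives $f^{(r)}$ are bounded.
\item To estimate $\Delta_{h}^{n_{0}-m}(f^{(m)},x)$, split the series at $2^{-j}\asymp h$. For $2^{-j}\gtrsim h$ (low frequencies), use the mean value theorem with the $(n_{0})$-th derivative bound, giving $h^{n_{0}-m}2^{j(n_{0}-\delta)}$. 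For $2^{-j}\lesssim h$ (high frequencies), use the sup bound $2^{-j(\delta-m)}$.
\item Both geometric sums are $\ll h^{\delta-m}$, because $m<\delta<n_{0}$.
\end{enumerate}
When $\delta\notin\mathbb{N}$ this is exactly the first-difference H\"older condition on $f^{(m)}$. When $\delta=m+1$ it is the Zygmund condition $|\Delta_{h}^{2}(f^{(m)},\cdot)|\le Ch$. Together with $f\in C^{m}$ and the bounded lower derivatives, and unwinding the recursive norm, this gives $f\in\mathcal{C}^{\delta}(\mathbb{R})$.
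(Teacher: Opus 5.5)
The paper gives no argument for this statement. It quotes it from \cite{K1983}, Theorem 6.1, and in fact only ever uses the implication $(\Rightarrow)$ with $n=2$ and $1\le\delta<2$, to get $\Delta_{1/N}^{2}(f,k/N)\ll_{f}N^{-\delta}$ in the proof of Theorem \ref{thm:explicit discrete}.

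Your $(\Rightarrow)$ part settles exactly that case in two lines, via $\Delta_h^n(f,x)=\int_{[0,h]^m}\Delta_h^{n-m}(f^{(m)},x+t_1+\dots+t_m)\,dt$. For $\delta\in(1,2)$ it gives $|\Delta_h^2(f,x)|\le h\sup|\Delta_h^1 f'|\le Ch^{\delta}$, and for $\delta=1$ it is the definition. So your $(\Rightarrow)$ argument is a self-contained replacement for the citation. Your converse proves more than the paper needs: you use Marchaud's identity $\Delta_{2h}^{k}-2^{k}\Delta_{h}^{k}=\Delta_{h}^{k+1}P_{k}(T_{h})$ to descend to the minimal order $n_0$, then dyadic smoothing to recover $C^m$-regularity and the H\"older/Zygmund modulus of $f^{(m)}$. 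This is the classical route and is correct in outline.

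One step should be tightened: item 3 of step 2. Bernstein's inequality $\|g^{(r)}\|_\infty\ll 2^{jr}\|g\|_\infty$ requires $\widehat{g}$ to be supported in $|\xi|\ll 2^{j}$. With a compactly supported $\varphi$ the functions $f_{j+1}-f_j$ are not band-limited, so the inequality is not available as written. There are two easy fixes.
\begin{itemize}
\item Take $\varphi$ Schwartz with $\widehat{\varphi}$ compactly supported. Items 1 and 2 are unchanged, since $\int|\varphi(u)||u|^{\delta}\,du<\infty$.
\item Keep $\varphi$ compactly supported and use the trivial identity
\[
f_{j+1}-f_{j}=\bigl(f*(K_{2^{-j-1}}-K_{2^{-j}})\bigr)*K_{2^{-j-2}}+\bigl(f-f*K_{2^{-j-2}}\bigr)*\bigl(K_{2^{-j-1}}-K_{2^{-j}}\bigr),
\]
putting all $r$ derivatives on the kernels. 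Since $K_t(y)=t^{-1}K(y/t)$, you have $\|K_t^{(r)}\|_{L^1}=t^{-r}\|K^{(r)}\|_{L^1}$. Both $\|f_{j+1}-f_j\|_\infty$ and $\|f-f_{j+2}\|_\infty$ are $\ll 2^{-j\delta}$. Hence $\|(f_{j+1}-f_j)^{(r)}\|_\infty\ll 2^{-j(\delta-r)}$ for every $r\ge 0$, including $r=n_0$, which item 5 uses.
\end{itemize}
Separately, the sign in item 1 should be $(-1)^{n}$ rather than $(-1)^{n+1}$; this does not affect anything downstream.
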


For a reference, see \cite{K1983}, Theorem 6.1. We also recall that
the following useful relations for function in Zygumd and H\"older--Zygmund
class about the definite difference $\Delta_{h}^{1}\left(f,x\right)$;
indeed, we have
\begin{equation}
f\in\mathcal{C}^{1}\left(\mathbb{R}\right)\Rightarrow\left|\Delta_{h}^{1}\left(f,x\right)\right|\leq M\left|h\log\left(\frac{1}{h}\right)\right|,\,M>0\label{eq:LogLip}
\end{equation}
for all $x,h\in\mathbb{R},\,h\neq0$ (see \cite{C1995}, Prop. 2.3.7)
and, if $\delta>1$
\[
f\in\mathcal{C}^{\delta}\left(\mathbb{R}\right)\Rightarrow\left|\Delta_{h}^{1}\left(f,x\right)\right|\leq M\left|h\right|,\,M>0
\]
that is, $f$ verifies the classical Lipschitz condition, for all
$x,h\in\mathbb{R}$, since
\[
\mathcal{C}^{\delta}\left(\mathbb{R}\right)\hookrightarrow\text{Lip}\left(\mathbb{R}\right)
\]
if $\delta>1$, where 
\[
\text{Lip}\left(\mathbb{R}\right):=\left\{ f\in L^{\infty}\left(\mathbb{R}\right):\left\Vert f\right\Vert _{\text{Lip}\left(\mathbb{R}\right)}<+\infty\right\} 
\]
and
\[
\left\Vert f\right\Vert _{\text{Lip}\left(\mathbb{R}\right)}=\left\Vert f\right\Vert _{\infty}+\sup_{x,h\in\mathbb{R},h\neq0}\frac{\left|\Delta_{h}^{1}\left(f,x\right)\right|}{\left|h\right|}
\]
(see again \cite{R2022}). 

\subsection{Estimate for ratio of Gamma function $\Gamma(x)$ and other results}

It is a standard fact that, when one considers a discrete convolution
of functions admitting an explicit formula involving series over zeros,
the corresponding explicit formula for the convolution is related,
in one way or another, to a sum involving a convolution of those zeros.
Very often, this leads to the need to control series involving the
Euler beta function or ratios of gamma functions. This phenomenon
also arises in our work, so it is convenient to record the following
proposition.
\begin{prop}
\label{prop:gamma est}Let $x,y\in\mathbb{R}$. Then
\[
\left|\frac{\Gamma\left(\frac{1}{2}+ix\right)\Gamma\left(\frac{1}{2}+iy\right)}{\Gamma\left(2+i(x+y)\right)}\right|\ll\begin{cases}
\frac{e^{\pi\left(\left|x+y\right|-\left|x\right|-\left|y\right|\right)/2}}{\left|x+y\right|^{3/2}}, & \left|x+y\right|>1\\
e^{-\pi\left(\left|x\right|+\left|y\right|\right)/2}, & \left|x+y\right|\leq1,
\end{cases}
\]
\end{prop}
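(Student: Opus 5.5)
The plan is to use exact formulas for the numerator factors and Stirling's formula for the denominator. For the numerator, the reflection formula gives, for real $t$,
\[
\left|\Gamma\left(\tfrac{1}{2}+it\right)\right|^{2}=\frac{\pi}{\cosh\left(\pi t\right)},
\]
and hence
\[
\left|\Gamma\left(\tfrac{1}{2}+ix\right)\Gamma\left(\tfrac{1}{2}+iy\right)\right|\ll e^{-\pi\left(\left|x\right|+\left|y\right|\right)/2}
\]
uniformly in $x,y\in\mathbb{R}$. This settles the numerator completely.

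For the denominator, set $t=x+y$. Stirling's formula in vertical strips gives, for fixed real part $\sigma=2$ and $|t|\geq1$,
\[
\left|\Gamma\left(2+it\right)\right|\asymp\left|t\right|^{3/2}e^{-\pi\left|t\right|/2}.
\]
Alternatively one can argue exactly, with no asymptotics: from $\Gamma(2+it)=(1+it)\,it\,\Gamma(it)$ and $|\Gamma(it)|^{2}=\pi/(t\sinh(\pi t))$ we get
\[
\left|\Gamma(2+it)\right|^{2}=\frac{(1+t^{2})\,\pi t}{\sinh(\pi t)},
\]
which for $|t|>1$ is $\asymp |t|^{3}e^{-\pi|t|}$. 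Dividing the two estimates then gives, for $|x+y|>1$,
\[
\left|\frac{\Gamma\left(\tfrac{1}{2}+ix\right)\Gamma\left(\tfrac{1}{2}+iy\right)}{\Gamma\left(2+i(x+y)\right)}\right|\ll\frac{e^{\pi\left(\left|x+y\right|-\left|x\right|-\left|y\right|\right)/2}}{\left|x+y\right|^{3/2}},
\]
which is the first case of the statement.

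For $|x+y|\leq1$, the function $|\Gamma(2+it)|$ is continuous and nonvanishing on the compact set $|t|\leq1$. It is therefore bounded below by a positive absolute constant; by the exact formula above its minimum is at least $\Gamma(2)\sqrt{2\pi/\sinh\pi}$-ish, and in any case positive. So in this range the ratio is $\ll e^{-\pi(|x|+|y|)/2}$, which is the second case.

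None of the steps is a real obstacle. The only point needing care is making the lower bound for $|\Gamma(2+it)|$ uniform with an absolute implied constant, and the exact $\sinh$ formula gives this directly.
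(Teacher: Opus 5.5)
Your proof is correct and follows essentially the same route as the paper. The paper also uses the exact identities $|\Gamma(\frac{1}{2}+ix)|^{2}=\pi/\cosh(\pi x)$ and $|\Gamma(2+it)|^{2}=(1+t^{2})\pi|t|/\sinh(\pi|t|)$, bounds $\sinh(\pi|t|)\leq e^{\pi|t|}/2$ and $\cosh(\pi x)\geq e^{\pi|x|}/2$ when $|x+y|>1$, and uses $\sinh(\pi|t|)/(\pi|t|)\ll1$ when $|x+y|\leq1$; your compactness remark and the explicit lower bound $|\Gamma(2+it)|\geq\sqrt{2\pi/\sinh\pi}$ on $|t|\leq1$ simply restate that last step.
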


\begin{proof}
From the well-known relations
\[
\left|\Gamma\left(\frac{1}{2}+ix\right)\right|^{2}=\frac{\pi}{\cosh\left(\pi x\right)},\,\left|\Gamma\left(2+i(x+y)\right)\right|^{2}=\left(1+\left(x+y\right)^{2}\right)\frac{\pi\left|x+y\right|}{\sinh\left(\pi\left|x+y\right|\right)}
\]
(see \cite{O2010}, $5.4.3$ and $5.4.4$), then
\[
\left|\frac{\Gamma\left(\frac{1}{2}+ix\right)\Gamma\left(\frac{1}{2}+iy\right)}{\Gamma\left(2+i(x+y)\right)}\right|^{2}=\pi\frac{\sinh\left(\pi\left|x+y\right|\right)}{\left(1+\left(x+y\right)^{2}\right)\left|x+y\right|\cosh\left(\pi x\right)\cosh\left(\pi y\right)}.
\]
Assume $\left|x+y\right|>1.$ Then, since
\[
\sinh\left(\pi\left|s\right|\right)\leq\frac{e^{\pi\left|s\right|}}{2},\cosh\left(\pi s\right)\geq\frac{e^{\pi\left|s\right|}}{2}
\]
valid for all $s\in\mathbb{R},$ we get
\[
\left|\frac{\Gamma\left(\frac{1}{2}+ix\right)\Gamma\left(\frac{1}{2}+iy\right)}{\Gamma\left(2+i(x+y)\right)}\right|\ll\frac{e^{\pi\left(\left|x+y\right|-\left|x\right|-\left|y\right|\right)/2}}{\left|x+y\right|^{3/2}}.
\]
If $\left|x+y\right|\leq1$, since
\[
\frac{\sinh\left(\pi\left|s\right|\right)}{\pi\left|s\right|}\ll1
\]
then we have
\[
\left|\frac{\Gamma\left(\frac{1}{2}+ix\right)\Gamma\left(\frac{1}{2}+iy\right)}{\Gamma\left(2+i(x+y)\right)}\right|\ll e^{-\pi\left(\left|x\right|+\left|y\right|\right)/2}.
\]
\end{proof}
Convergence of double series on the non-trivial zeros is a delicate
matter; for this reason, in \cite{CGZ2026} , we prove a theorem about
the convergence of a double series over the non-trival zeros of the
Riemann zeta function $\zeta(s)$ and a suitable function $f(\rho)$
that verifies some hypotheses. We need a similar results but now for
the non-trivial zeros of $L\left(s,\chi\right)$. We first recall
the following result.
\begin{prop}
Let $T\geq4$ be a real number and $N\left(T\right)$ be the number
of non-trivial zeros $\rho=\sigma+i\gamma$ of $\zeta\left(s\right)$
in the rectangle $0<\sigma<1$, $0<\gamma<T$. Moreover, let $\chi$
a primitive character $\mod q$ with $q\in\mathbb{N}^{+},q>1$ and
let $N\left(T,\chi\right)$ be the number of non-trivial zeros $\rho_{\chi}=\sigma_{\chi}+i\gamma_{\chi}$
in the rectangle $0<\sigma_{\chi}<1,0\leq\gamma_{\chi}\leq T$. Then,
we have
\begin{equation}
N\left(T\right)=\frac{T}{2\pi}\log\left(\frac{T}{2\pi}\right)-\frac{T}{2\pi}+O\left(\log\left(T\right)\right),\label{eq:N(T)}
\end{equation}
\begin{equation}
N\left(T,\chi\right)=\frac{T}{2\pi}\log\left(\frac{qT}{2\pi}\right)-\frac{T}{2\pi}+O\left(\log\left(qT\right)\right).\label{eq:N(T,=00005Cchi)}
\end{equation}
\end{prop}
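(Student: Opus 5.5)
This is the Riemann--von Mangoldt formula together with its analogue for primitive Dirichlet $L$-functions. I would prove both with the same argument-principle scheme.

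\emph{Setting up the completed function.} For $\zeta$, I work with $\xi(s)=\frac12 s(s-1)\pi^{-s/2}\Gamma(s/2)\zeta(s)$. For $\chi$ primitive mod $q>1$ with parity $\kappa\in\{0,1\}$ (where $\chi(-1)=(-1)^{\kappa}$), I use
\[
\xi(s,\chi)=\left(\frac{q}{\pi}\right)^{(s+\kappa)/2}\Gamma\left(\frac{s+\kappa}{2}\right)L(s,\chi).
\]
Both functions are entire of order one, and their zeros are exactly the non-trivial zeros. They satisfy the functional equations $\xi(s)=\xi(1-s)$ and $\xi(s,\chi)=\varepsilon(\chi)\xi(1-s,\overline{\chi})$ with $|\varepsilon(\chi)|=1$.

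\emph{Counting via the argument principle.} I take $T$ not equal to any ordinate of a zero; the remaining $T$ follow by a limiting argument, since $N$ jumps by $O(\log qT)$. For $\zeta$, I integrate $\xi'/\xi$ around the rectangle with vertices $2,\,2+iT,\,-1+iT,\,-1$. For $L$, which has no symmetry between $\gamma$ and $-\gamma$, I use the rectangle with vertices $2\pm iT,\,-1\pm iT$. This gives $2\pi N(T,\chi)+2\pi N(T,\overline{\chi})$ up to the zero-free-region boundary contributions, and the functional equation relates the two halves. Equivalently and more simply, I count zeros with $0\le\gamma\le T$ by integrating along the path $\tfrac12\to2\to2+iT\to\tfrac12+iT$ and applying the functional equation to the reflected part. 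Thus
\[
\pi N(T)=\Delta\arg\xi \quad\text{and}\quad \pi N(T,\chi)=\Delta\arg\xi(\cdot,\chi)+O(1),
\]
where $\Delta\arg$ denotes the change of argument along that path.

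\emph{Contribution of the factors.} The factor $\left(\frac{q}{\pi}\right)^{s/2}$ contributes $\frac{T}{2}\log\frac{q}{\pi}$. By Stirling's formula, $\arg\Gamma\left(\frac{1/2+\kappa+iT}{2}\right)=\frac{T}{2}\log\frac{T}{2}-\frac{T}{2}+O(1)$. Adding these and dividing by $\pi$ gives the main term $\frac{T}{2\pi}\log\frac{qT}{2\pi}-\frac{T}{2\pi}$. In the $\zeta$ case the $\pi^{-s/2}$ factor plays the same role with $q=1$, and $s(s-1)$ contributes $\pi+o(1)$, which is absorbed into the error term. The change of argument along the segment $[\tfrac12,2]$ and along $\Re s=2$ is $O(1)$, since $\Re L(s,\chi)\ge 1-\sum_{n\ge2}n^{-2}>0$ on $\Re s=2$.

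\emph{The main obstacle} is bounding $S=\Delta\arg L$ along the horizontal segment from $2+iT$ to $\tfrac12+iT$ by $O(\log qT)$. I would follow the standard route:
\begin{enumerate}
\item The Hadamard product gives $\frac{L'}{L}(s,\chi)=\sum_{\rho}\frac{1}{s-\rho}+O(\log q(|t|+2))$.
\item Comparing this at $s=2+it$ and at $s=\sigma+it$ shows that the number of zeros with $|\gamma-T|\le1$ is $O(\log qT)$.
\item Consequently, $\frac{L'}{L}(s,\chi)=\sum_{|\gamma-T|\le1}\frac{1}{s-\rho}+O(\log qT)$ for $-1\le\sigma\le2$.
\item Integrating along the horizontal segment, each term $\frac{1}{s-\rho}$ changes argument by at most $\pi$, so $S=O(\log qT)$.
\end{enumerate}
The only care needed beyond the classical case (Davenport, chapters 15--16) is to keep the uniformity in $q$: every $O(1)$ constant must be independent of $q$, which holds because $\chi$ is primitive and the Hadamard constant $B(\chi)$ satisfies $\Re B(\chi)=-\sum_\rho\Re(1/\rho)$.
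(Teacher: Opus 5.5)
Your sketch is the classical Riemann--von Mangoldt argument, which is what the paper relies on. The paper gives no proof of its own and simply cites Davenport (chapters 15--16) and Montgomery--Vaughan (chapter 14). Your half-path $\tfrac12\to2\to2+iT\to\tfrac12+iT$ does give the one-sided count for a single $\chi$. The reason is that combining the functional equation with $\overline{\xi(\bar s,\chi)}=\xi(s,\overline{\chi})$ yields $\xi(s,\chi)=\varepsilon(\chi)\overline{\xi(1-\bar s,\chi)}$. This is a reflection symmetry in the line $\sigma=\tfrac12$ for the same character, and it is what justifies your ``apply the functional equation to the reflected part''.

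One claim is wrong as stated, though the final result survives. The change of argument along the real segment $[\tfrac12,2]$ is not $O(1)$ in general for complex $\chi$. Your justification ($\Re L(s,\chi)>0$) only applies on $\Re s=2$. On the real axis $L(\sigma,\chi)$ is not real for complex $\chi$, and its argument there need not be bounded uniformly in $q$. The fix is to rerun your Steps 1--4 at height $0$ instead of height $T$:
\begin{itemize}
\item there are $O(\log q)$ zeros with $|\gamma|\le1$;
\item the Gamma-factor terms are bounded for $\sigma\in[\tfrac12,2]$;
\item hence this segment contributes $O(\log q)$, which fits inside the $O(\log qT)$ error.
\end{itemize}

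Relatedly, $N(T,\chi)$ counts zeros with $\gamma=0$, and $L(s,\chi)$ may vanish at $s=\tfrac12$ or at real points of $(0,1)$. So the bottom of your contour can pass through zeros. Lower it to $\Im s=-\delta$ with $\delta>0$ small. The map $s\mapsto1-\bar s$ preserves that line, so the half-path argument is unchanged.

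Finally, in Step 1 keep the compensating terms $\tfrac1\rho$ and $B(\chi)$, since the bare series $\sum_\rho\frac{1}{s-\rho}$ is not absolutely convergent. The local count comes from the real-part identity (via $\Re B(\chi)=-\sum_\rho\Re\tfrac1\rho$), and $B(\chi)$ cancels in the difference you take in Step 3.
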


See, e.g, \cite{DAVEN}, chapt. 15 and 16 or \cite{MV2006}, chapt.
14. Clearly, from the previous formulae we may derive also the cases
$\left|\gamma\right|\leq T,\left|\gamma_{\chi}\right|\leq T$, due
to the symmetry of the zeros of $\zeta\left(s\right)$ and from the
fact that the number of zeros of $L\left(s,\chi\right)$ with $-T\leq\gamma_{\chi}\leq0$
is $N\left(T,\overline{\chi}\right)$.
\begin{thm}
\label{thm:dobule series =00005Cchi}Assume the generalized Riemann
hypothesis, fix $q_{1},q_{2}\in\mathbb{N}^{+}$ and $\chi_{1}\mod q_{1},\,\chi_{2}\mod q_{2}$
two Dirichlet character. Let $\rho_{\chi_{j}}=\frac{1}{2}+i\gamma_{\chi_{j}}$
runs over the non-trivial zeros of the $L$- function $L(s,\chi_{j})$,
$j=1,2$. Let 
\[
Z_{0}^{+}\left(\chi_{j}\right):=\left\{ \rho_{\chi_{j}}:L\left(\rho_{\chi_{j}},\chi_{j}\right)=0,\,\gamma_{\chi_{j}}\geq0\right\} ,
\]
\[
Z^{+}\left(\chi_{j}\right):=\left\{ \rho_{\chi_{j}}:L\left(\rho_{\chi_{j}},\chi_{j}\right)=0,\,\gamma_{\chi_{j}}>0\right\} 
\]
and assume that with the notation
\[
\sum_{\rho_{\chi_{j}}\in Z_{0}^{+}\left(\chi_{j}\right)},\,\sum_{\rho_{\chi_{j}}\in Z^{+}\left(\chi_{j}\right)}
\]
we intend that we are taking $\rho_{\chi_{j}}\in Z_{0}^{+}\left(\chi_{j}\right),\,\rho_{\chi_{j}}\in Z^{+}\left(\chi_{j}\right)$
and summing it considering their multiplicity, for $j=1,2$. Let $f\left(z\right),\,g\left(z\right)$
be complex functions defined on the whole set of the non-trivial zeros
of $L\left(s,\chi_{1}\right)$, $L\left(s,\overline{\chi}_{1}\right)$
and $L\left(s,\chi_{2}\right)$, $L\left(s,\overline{\chi}_{2}\right)$,
respectively. Moreover, assume
\[
\sum_{\underset{{\scriptstyle \gamma_{\chi_{1}}\leq T}}{\rho_{\chi_{1}}\in Z_{0}^{+}\left(\chi_{1}\right)}}\left|\frac{f\left(\rho_{\chi_{1}}\right)}{\rho_{\chi_{1}}}\right|=o\left(T^{\alpha}\right),\,\sum_{\underset{{\scriptstyle \gamma_{\chi_{1}}\leq T}}{\rho_{\chi_{1}}\in Z_{0}^{+}\left(\chi_{1}\right)}}\left|\frac{f\left(\overline{\rho_{\chi_{1}}}\right)}{\overline{\rho_{\chi_{1}}}}\right|=o\left(T^{\alpha}\right)
\]
\[
\sum_{\underset{{\scriptstyle \gamma_{\chi_{2}}\leq T}}{\rho_{\chi_{2}}\in Z_{0}^{+}\left(\chi_{2}\right)}}\left|\frac{g\left(\rho_{\chi_{2}}\right)}{\rho_{\chi_{2}}}\right|=o\left(T^{\alpha}\right),\,\sum_{\underset{{\scriptstyle \gamma_{\chi_{2}}\leq T}}{\rho_{\chi_{2}}\in Z_{0}^{+}\left(\chi_{2}\right)}}\left|\frac{g\left(\overline{\rho_{\chi_{2}}}\right)}{\overline{\rho_{\chi_{2}}}}\right|=o\left(T^{\alpha}\right)
\]
as $T\rightarrow+\infty$ for every $\alpha>0$. Then, the double
series
\[
\sum_{\rho_{\chi_{1}}}f\left(\rho_{\chi_{1}}\right)\sum_{\rho_{\chi_{2}}}g\left(\rho_{\chi_{2}}\right)\frac{\Gamma\left(\rho_{\chi_{1}}\right)\Gamma\left(\rho_{\chi_{2}}\right)}{\Gamma\left(\rho_{\chi_{1}}+\rho_{\chi_{2}}+k+1\right)}
\]
converges absolutely for $k\in\mathbb{R},\,k>1/2$.
\end{thm}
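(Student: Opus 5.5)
Under GRH write $\rho_{\chi_1}=\tfrac12+i\gamma_1$ and $\rho_{\chi_2}=\tfrac12+i\gamma_2$. The plan is to bound the absolute value of each term by a product of a factor depending only on $\gamma_1$, a factor depending only on $\gamma_2$, and a Gamma-ratio factor, and then sum over dyadic ranges of $\gamma_1,\gamma_2$. The zeros come in four families: $\gamma_1\ge0$ or $\gamma_1<0$, and $\gamma_2\ge0$ or $\gamma_2<0$. The zeros with negative ordinate are conjugates of zeros of $L(s,\overline{\chi}_j)$, which is why the hypotheses also control $f(\overline{\rho})/\overline{\rho}$ and $g(\overline{\rho})/\overline{\rho}$. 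By this symmetry it suffices to treat two cases: same signs ($\gamma_1,\gamma_2\ge0$) and opposite signs ($\gamma_1\ge0>\gamma_2$).

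\textbf{Gamma ratio.} We need a version of Proposition \ref{prop:gamma est} with denominator $\Gamma(2+k'+i(\gamma_1+\gamma_2))$, where $k'=k-1$ and $\operatorname{Re}(\rho_{\chi_1}+\rho_{\chi_2}+k+1)=k+2$. The same proof applies. It uses $|\Gamma(\tfrac12+ix)|^2=\pi/\cosh(\pi x)$ together with Stirling in the form $|\Gamma(\sigma+it)|\asymp|t|^{\sigma-1/2}e^{-\pi|t|/2}$ for fixed $\sigma$ and $|t|\ge1$. This gives
\[
\left|\frac{\Gamma(\rho_{\chi_1})\Gamma(\rho_{\chi_2})}{\Gamma(\rho_{\chi_1}+\rho_{\chi_2}+k+1)}\right|\ll_k \frac{e^{\pi(|\gamma_1+\gamma_2|-|\gamma_1|-|\gamma_2|)/2}}{(1+|\gamma_1+\gamma_2|)^{k+3/2}}.
\]

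\textbf{Same signs.} When $\gamma_1,\gamma_2\ge0$ the exponential factor equals $1$. Since $|\rho_{\chi_j}|\asymp1+|\gamma_j|$, we can write $|f(\rho_{\chi_1})|=|f(\rho_{\chi_1})/\rho_{\chi_1}|\,|\rho_{\chi_1}|$, and similarly for $g$. The term is then bounded by
\[
\left|\frac{f(\rho_{\chi_1})}{\rho_{\chi_1}}\right|\left|\frac{g(\rho_{\chi_2})}{\rho_{\chi_2}}\right|\frac{(1+\gamma_1)(1+\gamma_2)}{(1+\gamma_1+\gamma_2)^{k+3/2}}.
\]
Since $(1+\gamma_1)(1+\gamma_2)\le(1+\gamma_1+\gamma_2)^2$, this is at most the product of the two ratios times $(1+\gamma_1+\gamma_2)^{1/2-k}$. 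Split $\gamma_1,\gamma_2$ into dyadic blocks $[2^a,2^{a+1})$ and $[2^b,2^{b+1})$, and use $(1+\gamma_1+\gamma_2)^{1/2-k}\le (1+\gamma_1)^{-(k-1/2)/2}(1+\gamma_2)^{-(k-1/2)/2}$. The hypotheses $\sum_{\gamma\le T}|f(\rho)/\rho|=o(T^{\alpha})$ for every $\alpha>0$, applied with $\alpha<(k-1/2)/2$, make each one-variable dyadic series converge. This is exactly where $k>1/2$ is used, and the double series then factors into two convergent single series.

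\textbf{Opposite signs.} This case is the main obstacle. If $\gamma_1\ge0>\gamma_2$, then $|\gamma_1+\gamma_2|-|\gamma_1|-|\gamma_2|=-2\min(|\gamma_1|,|\gamma_2|)$, so the exponential gives decay $e^{-\pi\min(|\gamma_1|,|\gamma_2|)}$. The polynomial factor $(1+|\gamma_1+\gamma_2|)^{-k-3/2}$, however, may be $O(1)$ along the near-diagonal $\gamma_1\approx-\gamma_2$. Assume without loss of generality $|\gamma_2|\le|\gamma_1|$. Then the term is bounded by
\[
|f(\rho_{\chi_1})|\,|g(\rho_{\chi_2})|\,e^{-\pi|\gamma_2|}(1+|\gamma_1|-|\gamma_2|)^{-k-3/2},
\]
with $|f(\rho_{\chi_1})|\ll(1+\gamma_1)|f/\rho|$.

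The plan is to sum over $\gamma_1$ first, for fixed $\gamma_2$. We need
\[
\sum_{\gamma_1\ge|\gamma_2|}\left|\frac{f(\rho_{\chi_1})}{\rho_{\chi_1}}\right|\frac{1+\gamma_1}{(1+\gamma_1-|\gamma_2|)^{k+3/2}}\ll(1+|\gamma_2|)^{C}.
\]
For this, use $1+\gamma_1\le(1+|\gamma_2|)(1+\gamma_1-|\gamma_2|)$, and bound the remaining factor $(1+\gamma_1-|\gamma_2|)^{-k-1/2}$ by unit-length blocks in $\gamma_1$. On each block, partial summation using the $o(T^{\alpha})$ hypothesis controls the mass of $|f/\rho|$ by $\ll(1+\gamma_1)^{\alpha}$, and summing the blocks against $(1+\gamma_1-|\gamma_2|)^{-k-1/2}$ gives a polynomial bound in $|\gamma_2|$, again using $k>1/2$. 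The exponential factor $e^{-\pi|\gamma_2|}$ then absorbs every polynomial growth in $|\gamma_2|$, including $|g(\rho_{\chi_2})|\ll(1+|\gamma_2|)\,|g/\rho|$. The sum over $\gamma_2$ therefore converges by the hypothesis on $g$, or more crudely by the zero count (\ref{eq:N(T,=00005Cchi)}) combined with the polynomial bounds implied by the hypotheses.

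The symmetric subcase $|\gamma_1|\le|\gamma_2|$ is handled the same way with the roles of $f$ and $g$ exchanged. Multiplicities are included automatically, since all sums are counted with multiplicity.
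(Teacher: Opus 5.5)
Your proof is correct. The same-sign case is essentially the paper's argument. Stirling gives a Gamma ratio $\ll(1+\gamma_1+\gamma_2)^{-k-3/2}$, an AM--GM-type inequality splits this into a product, and the resulting one-variable series $\sum|f(\rho)/\rho|\,(1+\gamma)^{1/4-k/2}$ converge by partial summation. That is exactly where $k>1/2$ enters.

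The opposite-sign case is where you take a different route.
\begin{itemize}
\item \textbf{The paper's route.} It splits into two regions. In the near-diagonal region $|\gamma_1-|\gamma_2||<\alpha\max(\gamma_1,|\gamma_2|)$, it uses the lower bound $|\Gamma(x+iy)|\geq\Gamma(x)\,\text{sech}(\pi y)^{1/2}$ to get the decoupled decay $e^{-\pi(1-\alpha)(\gamma_1+|\gamma_2|)/2}$. In the off-diagonal region, $|\gamma_1-|\gamma_2||\gg_{\alpha}\max(\gamma_1,|\gamma_2|)$ gives the decoupled bound $e^{-\pi\min(\gamma_1,|\gamma_2|)}\max(\gamma_1,|\gamma_2|)^{-3/2-k}$. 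In each region the double sum is therefore dominated by a product of two single series.
\item \textbf{Your route.} You do not split. You keep the exact factor $e^{-\pi\min(|\gamma_1|,|\gamma_2|)}$ from the two-sided Stirling estimate. By Tonelli you sum over the larger ordinate first, against the kernel $(1+\gamma_1-|\gamma_2|)^{-k-1/2}$ with unit-block masses $\ll(2+|\gamma_2|+m)^{\alpha}$. This gives only polynomial growth in the smaller ordinate, which the exponential then absorbs.
\end{itemize}

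Your version needs no splitting parameter and no sech inequality, and it shows directly why the diagonal $\gamma_1\approx-\gamma_2$ is harmless. The cost is that you pull out the whole factor $1+\gamma_1\leq(1+|\gamma_2|)(1+\gamma_1-|\gamma_2|)$, so you need $k>1/2$ in this case too. The paper's opposite-sign bounds would work for any $k>-1/2$, though this is immaterial under the stated hypothesis.

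A small slip: since $\text{Re}(\rho_{\chi_1}+\rho_{\chi_2}+k+1)=k+2$, the denominator is $\Gamma(k+2+i(\gamma_1+\gamma_2))$, not $\Gamma(2+k'+\cdots)$ with $k'=k-1$. Your displayed bound with exponent $k+3/2$ is nevertheless the correct one.
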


\begin{proof}
Fix $\chi_{1}\mod q_{1},\,\chi_{2}\mod q_{2}$. We may assume that
$\gamma_{\chi_{j}}\neq0,\,j=1,2$ because, if one them is equal to
zero, we have
\[
\sum_{\rho_{\chi_{1}}}f\left(\rho_{\chi_{1}}\right)\sum_{\rho_{\chi_{2}}:\gamma_{\chi_{2}}=0}g\left(\rho_{\chi_{2}}\right)\frac{\Gamma\left(\rho_{\chi_{1}}\right)\Gamma\left(\rho_{\chi_{2}}\right)}{\Gamma\left(\rho_{\chi_{1}}+\rho_{\chi_{2}}+k+1\right)}
\]
\[
\ll_{f}N\left(2,\chi_{2}\right)\sum_{\rho_{\chi_{1}}}\left|f\left(\rho_{\chi_{1}}\right)\right|\left|\frac{\Gamma\left(\rho_{\chi_{1}}\right)}{\Gamma\left(\rho_{\chi_{1}}+k+\frac{3}{2}\right)}\right|
\]
and the series, for $k>1/2$, trivially converges (actually it converges
for smaller $k$, but it is not important for our aims) by the Stirling's
formula
\begin{equation}
\left|\Gamma\left(x+iy\right)\right|\sim e^{-\pi\left|y\right|/2}\left|y\right|^{x-1/2},\,x_{1}<x<x_{2},\,\left|y\right|\rightarrow+\infty\label{eq:stirling}
\end{equation}
(see e.g., \cite{TIT}, section 4.42), and the same argument holds
in the case $\sum_{\rho_{\chi_{1}}:\gamma_{\chi_{1}}=0}\sum_{\rho_{\chi_{2}}}$,
hence we can assume $\gamma_{\chi_{j}}\neq0,\,j=1,2$. We start with
the case $\gamma_{\chi_{1}}>0,\,\gamma_{\chi_{2}}>0$. By the Stirling's
formula (\ref{eq:stirling}), we have
\[
\sum_{\rho_{\chi_{1}}\in Z^{+}\left(\chi_{1}\right)}\left|f\left(\rho_{\chi_{1}}\right)\right|\sum_{\rho_{\chi_{2}}\in Z^{+}\left(\chi_{2}\right)}\left|g\left(\rho_{\chi_{2}}\right)\right|\left|\frac{\Gamma\left(\rho_{\chi_{1}}\right)\Gamma\left(\rho_{\chi_{2}}\right)}{\Gamma\left(\rho_{\chi_{1}}+\rho_{\chi_{2}}+k+1\right)}\right|
\]
\[
\ll\sum_{\rho_{\chi_{1}}\in Z^{+}\left(\chi_{1}\right)}\left|f\left(\rho_{\chi_{1}}\right)\right|\sum_{\rho_{\chi_{2}}\in Z^{+}\left(\chi_{2}\right)}\left|g\left(\rho_{\chi_{2}}\right)\right|\frac{1}{\left(\gamma_{\chi_{1}}+\gamma_{\chi_{2}}\right)^{3/2+k}}
\]
\[
\ll\sum_{\rho_{\chi_{1}}\in Z^{+}\left(\chi_{1}\right)}\frac{\left|f\left(\rho_{\chi_{1}}\right)\right|}{\gamma_{\chi_{1}}^{3/4+k/2}}\sum_{\rho_{\chi_{2}}\in Z^{+}\left(\chi_{2}\right)}\frac{\left|g\left(\rho_{\chi_{2}}\right)\right|}{\gamma_{\chi_{2}}^{3/4+k/2}}
\]
by arithmetic-geometric mean, and both series converges absolutely
if $k>1/2$ since, by partial summation, taking $T>0$, we have
\[
\sum_{\rho_{\chi_{1}}:0<\gamma_{\chi_{1}}\leq T}\frac{\left|f\left(\rho_{\chi_{1}}\right)\right|}{\gamma_{\chi_{1}}^{3/4+k/2}}\ll\sum_{\rho_{\chi_{1}}:0<\gamma_{\chi_{1}}\leq T}\left|\frac{f\left(\rho_{\chi_{1}}\right)}{\rho_{\chi_{1}}}\right|\frac{1}{\gamma_{\chi_{1}}^{k/2-1/4}}
\]
\begin{equation}
=\sum_{\rho_{\chi_{1}}:0<\gamma_{\chi_{1}}\leq T}\left|\frac{f\left(\rho_{\chi_{1}}\right)}{\rho_{\chi_{1}}}\right|\frac{1}{T^{k/2-1/4}}+\frac{2k-1}{4}\int_{\delta_{\chi_{1}}}^{T}\sum_{\rho_{\chi_{1}}:0<\gamma_{\chi_{1}}\leq t}\left|\frac{f\left(\rho_{\chi_{1}}\right)}{\rho_{\chi_{1}}}\right|t^{-k/2-3/4}dt\label{eq:PS double series}
\end{equation}
where $\delta_{\chi_{1}}>0$ is a number such that
\[
0<\delta_{\chi_{1}}\leq\min\left\{ \gamma_{\chi_{1}}>0:L\left(\frac{1}{2}+i\gamma_{\chi_{1}},\chi_{1}\right)=0\right\} .
\]
and the same considerations holds for the case $g\left(\rho_{\chi_{2}}\right)$.
Now, we consider the case $\gamma_{\chi_{1}}>0,\,\gamma_{\chi_{2}}<0.$
Clearly, if $\rho_{\chi_{2}}=1/2+i\gamma_{\chi_{2}}$ is a zero of
$L\left(s,\chi_{2}\right)$, then $\overline{\rho_{\chi_{2}}}$ is
a zero of $L\left(s,\overline{\chi_{2}}\right)$. Let us define
\[
\rho_{\chi_{2}}^{*}:=\overline{\rho_{\overline{\chi_{2}}}}.
\]
Then, we have to deal with
\[
\sum_{\rho_{\chi_{1}}\in Z^{+}\left(\chi_{1}\right)}\left|f\left(\rho_{\chi_{1}}\right)\right|\sum_{\rho_{\overline{\chi_{2}}}\in Z^{+}\left(\overline{\chi_{2}}\right)}\left|g\left(\rho_{\chi_{2}}^{*}\right)\right|\left|\frac{\Gamma\left(\rho_{\chi_{1}}\right)\Gamma\left(\rho_{\chi_{2}}^{*}\right)}{\Gamma\left(\rho_{\chi_{1}}+\rho_{\chi_{2}}^{*}+k+1\right)}\right|.
\]
Let $0<\alpha<1$ be fixed. We split the second series in the following
way:
\[
\sum_{\rho_{\chi_{2}}\in Z^{+}\left(\overline{\chi_{2}}\right)}=\sum_{\underset{{\scriptstyle \left|\gamma_{\chi_{1}}-\gamma_{\chi_{2}}\right|<\alpha\max\left(\gamma_{\chi_{1}},\gamma_{\chi_{2}}\right)}}{\rho_{\overline{\chi_{2}}}\in Z^{+}\left(\overline{\chi_{2}}\right)}}+\sum_{\underset{{\scriptstyle \left|\gamma_{\chi_{1}}-\gamma_{\chi_{2}}\right|\geq\alpha\max\left(\gamma_{\chi_{1}},\gamma_{\chi_{2}}\right)}}{\rho_{\overline{\chi_{2}}}\in Z^{+}\left(\overline{\chi_{2}}\right)}}
\]
\[
=:\sideset{}{_{1}}\sum+\sideset{}{_{2}}\sum,
\]
say. We start with $\sideset{}{_{1}}\sum$. From the relation
\[
\left|\Gamma\left(x+iy\right)\right|\geq\Gamma\left(x\right)\text{sech}\left(\pi y\right)^{1/2},\,x\geq1/2
\]

(see \cite{O2010}, relation $5.6.7$) we have
\[
\left|\Gamma\left(\rho_{\chi_{1}}+\rho_{\chi_{2}}^{*}+k+1\right)\right|\geq\Gamma\left(2+k\right)\min_{0\leq t\leq\alpha\max\left(\gamma_{\chi_{1}},\gamma_{\chi_{2}}\right)}\text{sech}\left(\pi t\right)^{1/2}
\]
\[
\gg_{k}e^{-\alpha\pi\left(\gamma_{\chi_{1}}+\gamma_{\chi_{2}}\right)/2}
\]
hence, using again (\ref{eq:stirling}), we get
\[
\sum_{\rho_{\chi_{1}}\in Z^{+}\left(\chi_{1}\right)}\left|f\left(\rho_{\chi_{1}}\right)\right|\sum_{\underset{{\scriptstyle \left|\gamma_{\chi_{1}}-\gamma_{\chi_{2}}\right|<\alpha\max\left(\gamma_{\chi_{1}},\gamma_{\chi_{2}}\right)}}{\rho_{\overline{\chi_{2}}}\in Z^{+}\left(\overline{\chi_{2}}\right)}}\left|g\left(\rho_{\chi_{2}}^{*}\right)\right|\left|\frac{\Gamma\left(\rho_{\chi_{1}}\right)\Gamma\left(\rho_{\chi_{2}}^{*}\right)}{\Gamma\left(\rho_{\chi_{1}}+\rho_{\chi_{2}}^{*}+k+1\right)}\right|
\]
\[
\ll_{k}\sum_{\rho_{\chi_{1}}\in Z^{+}\left(\chi_{1}\right)}\left|f\left(\rho_{\chi_{1}}\right)\right|e^{-\pi\left(1-\alpha\right)\gamma_{\chi_{1}}/2}\sum_{\rho_{\overline{\chi_{2}}}\in Z^{+}\left(\overline{\chi_{2}}\right)}\left|g\left(\rho_{\chi_{2}}^{*}\right)\right|e^{-\pi\left(1-\alpha\right)\gamma_{\chi_{2}}/2}
\]
and the convergence follows again by partial summation. Now we consider
$\sideset{}{_{2}}\sum$. We split in two further cases: if $\left|\gamma_{\chi_{1}}-\gamma_{\chi_{2}}\right|\geq\alpha\max\left(\gamma_{\chi_{1}},\gamma_{\chi_{2}}\right)$,
then $\gamma_{\chi_{1}}\neq\gamma_{\chi_{2}}$, so if $\gamma_{\chi_{1}}>\gamma_{\chi_{2}}$,
then again by (\ref{eq:stirling})
\[
\sum_{\rho_{\chi_{1}}\in Z^{+}\left(\chi_{1}\right)}\left|f\left(\rho_{\chi_{1}}\right)\right|\sum_{\underset{{\scriptstyle \left|\gamma_{\chi_{1}}-\gamma_{\chi_{2}}\right|\geq\alpha\gamma_{\chi_{1}}}}{\rho_{\overline{\chi_{2}}}\in Z^{+}\left(\overline{\chi_{2}}\right)}}\left|g\left(\rho_{\chi_{2}}^{*}\right)\right|\left|\frac{\Gamma\left(\rho_{\chi_{1}}\right)\Gamma\left(\rho_{\chi_{2}}^{*}\right)}{\Gamma\left(\rho_{\chi_{1}}+\rho_{\chi_{2}}^{*}+k+1\right)}\right|
\]
\[
\ll\sum_{\rho_{\chi_{1}}\in Z^{+}\left(\chi_{1}\right)}\left|f\left(\rho_{\chi_{1}}\right)\right|\sum_{\underset{{\scriptstyle \left|\gamma_{\chi_{1}}-\gamma_{\chi_{2}}\right|\geq\alpha\gamma_{\chi_{1}}}}{\rho_{\overline{\chi_{2}}}\in Z^{+}\left(\overline{\chi_{2}}\right)}}\left|g\left(\rho_{\chi_{2}}^{*}\right)\right|\frac{e^{-\pi\gamma_{\chi_{2}}/2}e^{-\pi\gamma_{\chi_{1}}/2}}{\left(\gamma_{\chi_{1}}-\gamma_{\chi_{2}}\right)^{3/2+k}e^{-\pi\left(\gamma_{\chi_{1}}-\gamma_{\chi_{2}}\right)/2}}
\]
\[
\ll_{\alpha}\sum_{\rho_{\chi_{1}}\in Z^{+}\left(\chi_{1}\right)}\frac{\left|f\left(\rho_{\chi_{1}}\right)\right|}{\gamma_{\chi_{1}}^{3/2+k}}\sum_{\rho_{\overline{\chi_{2}}}\in Z^{+}\left(\overline{\chi_{2}}\right)}\left|g\left(\rho_{\chi_{2}}^{*}\right)\right|e^{-\pi\gamma_{\chi_{2}}}
\]
and again the convergence follows. A similar calculation holds in
the case $\gamma_{\chi_{2}}>\gamma_{\chi_{1}}$. 
\end{proof}
We also recall the following very easy fact, because we will use it
very often.
\begin{prop}
\label{prop:trivial}For every $a>0,\,\alpha\geq0$ and $n\geq0$
we have
\[
\int_{0}^{a}x^{\alpha}\left|\log\left(x\right)\right|^{n}dx\ll_{n}a^{\alpha+1}\left(1+\left|\log\left(a\right)\right|^{n}\right).
\]
\end{prop}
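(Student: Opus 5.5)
The plan is to substitute $x=at$ to reduce everything to a fixed interval and then split the logarithm. For $a>0$ the substitution gives
\[
\int_{0}^{a}x^{\alpha}\left|\log\left(x\right)\right|^{n}dx=a^{\alpha+1}\int_{0}^{1}t^{\alpha}\left|\log\left(a\right)+\log\left(t\right)\right|^{n}dt .
\]
Next I would use the elementary inequality $\left|u+v\right|^{n}\leq2^{n-1}\left(\left|u\right|^{n}+\left|v\right|^{n}\right)$, valid for $n\geq1$ by convexity; the case $n=0$ is trivial since then the integral is $a^{\alpha+1}/(\alpha+1)\leq a^{\alpha+1}$. This bounds the integral by
\[
2^{n-1}a^{\alpha+1}\left(\left|\log\left(a\right)\right|^{n}\int_{0}^{1}t^{\alpha}dt+\int_{0}^{1}t^{\alpha}\left|\log\left(t\right)\right|^{n}dt\right).
\]

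Both remaining integrals have to be bounded uniformly in $\alpha\geq0$, so that the implied constant depends only on $n$. The first is $1/(\alpha+1)\leq1$. For the second, I would substitute $t=e^{-u}$:
\[
\int_{0}^{1}t^{\alpha}\left|\log t\right|^{n}dt=\int_{0}^{\infty}u^{n}e^{-(\alpha+1)u}du=\frac{\Gamma\left(n+1\right)}{\left(\alpha+1\right)^{n+1}}\leq n!.
\]

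Combining the two bounds gives
\[
\int_{0}^{a}x^{\alpha}\left|\log\left(x\right)\right|^{n}dx\leq2^{n-1}a^{\alpha+1}\left(\left|\log\left(a\right)\right|^{n}+n!\right)\ll_{n}a^{\alpha+1}\left(1+\left|\log\left(a\right)\right|^{n}\right).
\]
The result holds for real $n\geq0$ as well as integer $n$, with $\Gamma\left(n+1\right)$ replacing $n!$ and $\max\left(1,2^{n-1}\right)$ replacing $2^{n-1}$. There is no genuine obstacle here; the only point needing care is that the constant must not depend on $\alpha$, and the factor $\left(\alpha+1\right)^{-n-1}\leq1$ ensures this.
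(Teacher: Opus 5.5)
Your proof is correct and complete. The paper gives no argument for this proposition: it records it as a ``very easy fact'' and uses it repeatedly, so there is no proof of the paper's to compare against.

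Your three steps fill that gap cleanly:
\begin{itemize}
\item the substitution $x=at$;
\item the elementary bound $|u+v|^{n}\leq\max(1,2^{n-1})(|u|^{n}+|v|^{n})$;
\item the exact evaluation $\int_{0}^{1}t^{\alpha}|\log t|^{n}dt=\Gamma(n+1)(\alpha+1)^{-n-1}\leq\Gamma(n+1)$.
\end{itemize}
Together they deliver exactly the uniformity in $\alpha\geq0$ that the subscript in $\ll_{n}$ asserts. They also handle non-integer $n$ directly, which an approach through repeated integration by parts would not.
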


\subsection{Explicit formulae extended up to $x>0$}

In this part we derive explicit formulae for some classical arithmetic
functions. For our purposes, it is important to formulate these identities
for all $x>0$, rather than only in the classical ranges $x>1$ or
$x>2$. At first sight this may seem unnecessary, since for small
values of $x$, the corresponding summatory functions are often trivial
(for example, $\psi(x):=\sum_{n\leq x}\Lambda(n)=0$ if $0<x<2$).

However, what is really needed is not so much an extension of the
summatory functions themselves, but rather a formulation of the zero-sum
side that remains meaningful when $x$ is small, without having to
rely on estimates for non-absolutely convergent series. For example,
one has an explicit formula of the shape
\begin{equation}
\psi\left(x\right)=x-\sum_{\rho:\left|\gamma\right|\leq T}\frac{x^{\rho}}{\rho}+R\left(x,T\right)\label{eq:trunc psi}
\end{equation}
valid for $x,T\geq2$, for a suitable error $R\left(x,T\right)$.
Even if the main term $x-\sum_{\rho:\left|\gamma\right|\leq T}\frac{x^{\rho}}{\rho}$
can formally be written also for $0<x<2$ such an extension is not
harmless, since the sum over the zeros is not absolutely convergent
and may produce an error term depending on $T$ that worsens when
$T$ grows. Moreover, in our setting because, by Theorem \ref{thm:MAINCGZ}
we are led to consider convolutions of the form

\[
\int x^{\rho_{1}}\left(A-x\right)^{\rho_{2}}dx
\]
and if the range of integration does not start at $0$, then incomplete
beta functions naturally appear, making the resulting expressions
substantially less transparent than the corresponding full beta-type
convolution.

We start by recalling the explicit formula for $\psi\left(x,\chi\right):=\sum_{n\leq x}\Lambda(n)\chi(n)$
and extending it to $x>0$.
\begin{thm}
\label{thm:Explicit psi}(Truncated explicit formula for $\psi\left(x,\chi\right)$)
Assume GRH. Let $x>0,\,x\neq1$,and $T\geq2$. Let $\chi$ be a primitive
non-principal character mod $q$, $q\in\mathbb{N},$ $q>1$. Then
\[
\psi\left(x,\chi\right)=-\sum_{\rho_{\chi}:\left|\gamma_{\chi}\right|\leq T}\frac{x^{\rho_{\chi}}}{\rho_{\chi}}+R_{\chi}\left(x,T,q\right)
\]
where $\rho_{\chi}$ runs over the non-trivial zeros of the function
$L(s,\chi)$, with 
\[
R_{\chi}\left(x,T,q\right)\ll\log(q)+\left|\log\left(x\right)\right|+\left|\log\left(\left|x-1\right|\right)\right|+\frac{\left(x+1\right)\log\left(qT\max\left\{ x,1/x\right\} \right)^{2}}{T}
\]
and the implicit constant is absolute.
\end{thm}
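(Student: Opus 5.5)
We derive the formula from the classical truncated explicit formula for $x\geq2$ and treat $0<x<2$ separately, showing that in that range the zero sum is itself bounded by the stated error. Throughout, GRH gives $\rho_{\chi}=\frac{1}{2}+i\gamma_{\chi}$, hence $|x^{\rho_{\chi}}|=x^{1/2}$.

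\emph{Case $x\geq2$.} We use the standard Perron-formula derivation for a primitive non-principal $\chi$ (Davenport, chapt. 19; Montgomery--Vaughan, chapt. 12). One integrates $-\frac{L^{\prime}}{L}(s,\chi)\frac{x^{s}}{s}$ over the line $\sigma=1+1/\log(x)$ up to height $T$, shifts the contour to the left, and picks up the residues at the $\rho_{\chi}$, at $s=0$ (or $s=-1$), and at the trivial zeros. This gives
\[
\psi(x,\chi)=-\sum_{|\gamma_{\chi}|\leq T}\frac{x^{\rho_{\chi}}}{\rho_{\chi}}+O\left(\frac{x\log(qxT)^{2}}{T}+\log(x)\log(q)+\log(q)+\log(x)\right)
\]
after the usual choice of $T$ avoiding zeros. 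The choice of $T$ is removed at cost $\log(qT)^{2}$-type terms divided by $T$, using \eqref{eq:N(T,=00005Cchi)}. The constant term $-\frac{L^{\prime}}{L}(0,\chi)$ (or $b(\chi)$) is $\ll\log(q)+\sum_{|\gamma_{\chi}|<1}1/|\rho_{\chi}|$. Under GRH, $|\rho_{\chi}|\geq1/2$, so this is $\ll\log(q)$. Since $\psi(x,\chi)$ has no $x$ main term for non-principal $\chi$, the required form follows for $x\geq2$. The factor $\log(x)\log(q)$ must be reduced to $\log(q)+|\log x|$: we aim to obtain this from the sharper GRH version of the horizontal-segment and near-$x$ estimates, or else simply absorb it, since $\log(x)\log(q)\ll x\log(qTx)^{2}/T$ fails only when $T>x$. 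This bookkeeping needs care. The term $|\log|x-1||$ is zero-relevant only near $x=1$ and costs nothing here.

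\emph{Case $0<x<2$, $x\neq1$.} Here $\psi(x,\chi)=0$, so it suffices to show $\sum_{|\gamma_{\chi}|\leq T}x^{\rho_{\chi}}/\rho_{\chi}$ is bounded by the stated error. We rerun the Perron argument directly rather than bounding the zero sum trivially, since the trivial bound $x^{1/2}\log(qT)^{2}$ grows with $T$. For $1<x<2$, we integrate the same contour with $c=2$. The Perron truncation error is governed by the terms $n$ near $x$: the term $n=2$ gives $\Lambda(2)\min(1,x^{2}/(T|x-2|))$, which is fine, and since there is no prime power in $(1,2)$ the remaining error is $\ll x^{2}\sum_{n}\Lambda(n)n^{-2}/T\ll(x+1)/T$. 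For $0<x<1$, the Perron integral of the Dirichlet series vanishes up to $O(x^{c}/T)$, and after shifting we get $\sum_{|\gamma|\leq T}x^{\rho}/\rho=$ (residue at $0$) $+$ (trivial-zero sum $\sum_{m}x^{-2m-\mathfrak{a}}/(2m+\mathfrak{a})$) $+$ (horizontal and left-line integrals).

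The main obstacle is $0<x<1$. The trivial-zero series diverges for $x<1$, so the contour cannot be moved to $-\infty$. Instead we stop at $\sigma=-1/2$ (or $\sigma=-1/4$, avoiding the pole at $0$ or $-1$), which yields residues only at $s=0$ (giving $\log x$-terms, i.e.\ the $|\log x|$ contribution when $\mathfrak{a}=0$, plus $\log q$). On that line the functional equation gives $\frac{L^{\prime}}{L}\ll\log(q(|t|+2))$ and $|x^{s}|=x^{-1/2}$. The left-line integral is not absolutely small, so it must be handled by integrating against $x^{s}/s$ and exploiting oscillation, or by replacing the left vertical line with a comparison to the same integral evaluated at $1/x>1$ via the functional equation. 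Along the horizontal segments, the $\frac{L^{\prime}}{L}\ll\log(qT)^{2}$ bound gives $\ll\log(qT/x)^{2}(x+x^{-1/2}\cdots)/T$; the statement's $\log(qT\max\{x,1/x\})^{2}/T$ is designed for exactly this. The $|\log|x-1||$ term arises as $x\to1$ from the $\min(1,\cdot/(T|\log x|))$ Perron error for $n=1$, where $\Lambda(1)=0$ but the Perron kernel near the line still produces a $\log$-singularity. I expect the left-line estimate for $x<1$ to be the delicate step, and would first attempt it via the functional equation reflection.
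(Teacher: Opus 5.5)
There is a genuine gap in the range $0<x<1$, which is exactly the new content of the statement. Your contour shift to $\sigma=-1/2$ leaves the vertical integral $\frac{1}{2\pi i}\int_{-1/2-iT}^{-1/2+iT}\frac{L^{\prime}}{L}(s,\chi)\frac{x^{s}}{s}\,ds$, together with horizontal pieces carrying $x^{-1/2}$. The only bound you actually have for this integral is the absolute one, $\ll x^{-1/2}\log(qT)\log T$, and it grows with $T$. The claimed error depends on $T$ only through $(x+1)\log(qT\max\{x,1/x\})^{2}/T$, so you need a bound for $\sum_{|\gamma_{\chi}|\leq T}x^{\rho_{\chi}}/\rho_{\chi}$ that is uniform in $T$. ``Exploiting oscillation'' and ``comparison via the functional equation'' are named but not carried out. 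Moving left is also intrinsically awkward for $x<1$, since $|x^{s}|$ grows there and the trivial-zero series diverges, as you observe.

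The missing idea, which is how the paper proceeds, is to reflect \emph{before} applying Perron.
\begin{itemize}
\item Put $y=1/x>1$ and apply the truncated Perron formula to $\sum_{n\leq y}^{\prime}\overline{\chi}(n)\Lambda(n)/n$, i.e.\ to $-\frac{L^{\prime}}{L}(s+1,\overline{\chi})\frac{y^{s}}{s}$ on $\sigma_{0}=1/\log y$.
\item Because now $y>1$, the contour can be pushed to $\sigma=-K$ with $K\to\infty$; the vertical segment there is $\ll y^{-K}T\log(qK)/K\to0$.
\item The trivial-zero residues sum to explicit logarithms, namely $-\frac{1}{2}\log(1-y^{-2})$ or $\frac{1}{2}\log\frac{y+1}{y-1}-\frac{1}{y}$.
\item The residue at $s=0$ is $-\frac{L^{\prime}}{L}(1,\overline{\chi})\ll\log q$.
\item The residues at $s=\rho_{\overline{\chi}}-1$ produce $y^{\rho_{\overline{\chi}}-1}/(1-\rho_{\overline{\chi}})=x^{\rho_{\chi}}/\rho_{\chi}$ with $\rho_{\chi}:=1-\rho_{\overline{\chi}}$. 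By the functional equation, $\rho_{\chi}$ runs exactly over the zeros of $L(s,\chi)$ with $|\gamma_{\chi}|\leq T$.
\end{itemize}
Since the left-hand side is $\ll1+\log y$, this bounds the zero sum by $\log q+|\log x|+|\log|x-1||$ plus the truncation error, uniformly in $T$. Your closing remark points in this direction, but the proposal as written does not produce the bound.

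Two smaller corrections. First, the $|\log|x-1||$ term does not come from an $n=1$ Perron term, which vanishes since $\Lambda(1)=0$. It comes from the trivial-zero series: $-\frac{1}{2}\log(x-1)$ for $x>1$, and $\frac{1}{2}\log\frac{1+x}{1-x}$, $-\frac{1}{2}\log(1-x^{2})$ after reflection for $x<1$. Second, for $x>1$ no $\log x\log q$ term arises if you quote Montgomery--Vaughan, Corollary 12.11, as the paper does. That result gives, for all $x>1$, the zero sum plus $-\frac{1}{2}\log(x-1)-\frac{\chi(-1)}{2}\log(x+1)+C(\chi)$, with $C(\chi)\ll\log q$ under GRH and error $\ll\log x\min\left(1,\frac{x}{T\langle x\rangle}\right)+\frac{x}{T}\log(qTx)^{2}$. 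Your fallback of absorbing $\log x\log q$ into $x\log(qTx)^{2}/T$ fails precisely when $T>x$, which the statement allows.
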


\begin{proof}
If $x>1$ the claim follows from the classical truncated explicit
formula. Indeed, defining
\[
\psi_{0}\left(x,\chi\right):=\psi\left(x,\chi\right)-\frac{1}{2}\Lambda\left(x\right)\chi\left(x\right)
\]
 we have
\[
\psi_{0}\left(x,\chi\right)=-\sum_{\rho_{\chi}:\left|\gamma_{\chi}\right|\leq T}\frac{x^{\rho_{\chi}}}{\rho_{\chi}}-\frac{1}{2}\log\left(x-1\right)-\frac{\chi\left(-1\right)}{2}\log\left(x+1\right)+C\left(\chi\right)+R_{\chi}^{1}\left(x,T,q\right)
\]
where
\[
R_{\chi}^{1}\left(x,T,q\right)\ll\log\left(x\right)\min\left(1,\frac{x}{T\left\langle x\right\rangle }\right)+\frac{x}{T}\log\left(qTx\right)^{2}
\]
and 
\[
C\left(\chi\right):=\frac{L^{\prime}}{L}\left(1,\overline{\chi}\right)+\log\left(\frac{q}{2\pi}\right)-C_{0}
\]
where $C_{0}$ is the Euler-Mascheroni constant (see \cite{MV2006},
Corollary 12.11) and $\left\langle x\right\rangle $ means the distance
of $x$ to the nearest integer. The estimate $C\left(\chi\right)\ll\log(q)$
holds if there is no exceptional zero (see again Corollary 12.11).
For the case $0<x<1$, we consider $y>1$ and $\sigma_{0}=1/\log\left(y\right)$
we take
\[
\sideset{}{^{\prime}}\sum_{n\leq y}\frac{\overline{\chi}\left(n\right)\Lambda\left(n\right)}{n}=-\frac{1}{2\pi i}\int_{\sigma_{0}-iT}^{\sigma_{0}+iT}\frac{L^{\prime}}{L}\left(s+1,\overline{\chi}\right)\frac{y^{s}}{s}ds
\]
\[
+O\left(\frac{\log\left(y\right)}{y}\min\left(1,\frac{y}{T\left\langle y\right\rangle }\right)+\frac{\log\left(y\right)^{2}}{T}\right)
\]
by Theorem 5.2 and Corollary 5.3 of \cite{MV2006}. From now on, we
denote with $\sigma:=\text{Re}(s)$, for $s\in\mathbb{C}.$ Let $\kappa$
be equal to $0$ or $1$ if $\overline{\chi}\left(-1\right)=1$ or
$\overline{\chi}\left(-1\right)=-1$, respectively, and let $\mathcal{A}\left(\kappa\right)$
be the set of points $s\in\mathbb{C}$ such that $\sigma\leq-1$ and
$\left|s+2n-\kappa\right|\geq1/4$. Let $\mathscr{C}$ be the contour
consisting of the line segments connecting $\sigma_{0}-iT_{1}$, $-K-iT_{1}$,
$-K+iT_{1}$ and $\sigma_{0}+iT_{1}$ where $K$ is chosen so that
$K-\kappa$ is an odd positive integer and $T_{1}$ is chosen as in
Lemma $12.9$ of \cite{MV2006}, in particular $T_{1}\in[T,T+1]$.
Now, for the horizontal segment, by Lemma $12.7$ and $12.9$ of \cite{MV2006},
we have
\[
\ll\int_{-1+iT_{1}}^{\sigma_{0}+iT_{1}}\left|\frac{L^{\prime}}{L}\left(s+1,\overline{\chi}\right)\frac{y^{s}}{s}\right|\left|ds\right|+\int_{-K+iT_{1}}^{-1+iT_{1}}\left|\frac{L^{\prime}}{L}\left(s+1,\overline{\chi}\right)\frac{y^{s}}{s}\right|\left|ds\right|
\]
\[
\ll\frac{\log\left(qT\right)^{2}}{T}\int_{-2}^{\sigma_{0}}y^{\sigma}d\sigma+\frac{\log\left(2qT\right)}{T}\int_{-\infty}^{-1}y^{\sigma}d\sigma
\]
\[
\ll\frac{\log\left(qT\right)^{2}}{T\log\left(y\right)}+\frac{\log\left(2qT\right)}{T\log\left(y\right)}
\]
and since, from the vertical segment
\[
\int_{-K-iT_{1}}^{-K+iT_{1}}\left|\frac{L^{\prime}}{L}\left(s+1,\overline{\chi}\right)\frac{y^{s}}{s}\right|\left|ds\right|\ll y^{-K}\frac{\log\left(qK\right)T}{K}\rightarrow0,\,K\rightarrow+\infty
\]
we get, by the residue theorem, that
\[
\sideset{}{^{\prime}}\sum_{n\leq y}\frac{\overline{\chi}\left(n\right)\Lambda\left(n\right)}{n}=-\sum_{\rho_{\overline{\chi}}:\left|\gamma_{\overline{\chi}}\right|<T}\frac{y^{\rho_{\overline{\chi}}-1}}{1-\rho_{\overline{\chi}}}+\sum_{k\geq1}\frac{y^{-2k-1+\kappa}}{2k+1-\kappa}-\frac{L^{\prime}}{L}\left(1,\overline{\chi}\right)+\frac{\overline{\chi}\left(-1\right)+1}{2y}+R_{\overline{\chi}}^{2}\left(y,T,q\right)
\]
with
\[
R_{\overline{\chi}}^{2}\left(y,T,q\right)\ll\frac{\log\left(y\right)}{y}\min\left(1,\frac{y}{T\left\langle y\right\rangle }\right)+\frac{\log\left(qyT\right)^{2}}{T}.
\]
Now, using the identities 
\[
\sum_{k\geq1}\frac{y^{-2k}}{2k}=-\frac{1}{2}\log\left(1-y^{-2}\right),\,\sum_{k\geq1}\frac{y^{-2k-1}}{2k+1}=\frac{1}{2}\log\left(\frac{y+1}{y-1}\right)-\frac{1}{y}
\]
we are able to control the sum $\sum_{k\geq1}\frac{y^{-2k-1+\kappa}}{2k+1-\kappa}$.
Then, taking $y=1/x$ and using the symmetries of the non-trivial
zeros, that is, under GRH, if $\rho_{\overline{\chi}}$ is a non-trivial
zero of $L\left(s,\overline{\chi}\right)$, then $1-\rho_{\overline{\chi}}$
is a zero of $L\left(s,\chi\right)$, we have that
\[
-\sum_{\rho_{\chi}:\left|\gamma_{\chi}\right|<T}\frac{x^{\rho_{\chi}}}{\rho_{\chi}}=O\left(\sideset{}{^{\prime}}\sum_{n\leq1/x}\frac{\Lambda\left(n\right)}{n}+\left|\frac{L^{\prime}}{L}\left(1,\overline{\chi}\right)\right|+\left|\log\left(\frac{x-1}{x}\right)\right|\right.
\]
\[
\left.+\left|\log\left(x\right)\right|\left[1+\min\left(1,\frac{x}{T\left\langle x\right\rangle }\right)+\min\left(1,\frac{1}{xT\left\langle 1/x\right\rangle }\right)\right]+\frac{\log\left(qT/x\right)^{2}}{T}\right)
\]
and, since $\left|\frac{L^{\prime}}{L}\left(1,\chi\right)\right|=\left|\frac{L^{\prime}}{L}\left(1,\overline{\chi}\right)\right|\ll\log(q)$
by Corollary 12.11 and observing that $\sideset{}{^{\prime}}\sum_{n\leq1/x}\frac{\Lambda\left(n\right)}{n}\ll\left|\log\left(x\right)\right|$
and $\psi_{0}\left(x,\chi\right)=0$ if $0<x<1$, the thesis follows.
\end{proof}
Due to the presence of the M\"obius function and our use of explicit
formulae, it is natural to introduce an additional conjectural assumption
concerning the simplicity of the zeros of the Riemann zeta function.
To address this issue, we recall the following conjecture.
\begin{conjecture}
(Gonek-Hejhal conjecture \cite{G1989,H1987}) Let $\rho=\beta+i\gamma$
be a non-trivial zero of the Riemann zeta function $\zeta(s)$. Let
\[
J_{k}\left(T\right):=\sum_{0<\gamma\leq T}\frac{1}{\left|\zeta^{\prime}\left(\rho\right)\right|^{2k}}.
\]
Then, for all $k\in\mathbb{R}$, we have
\begin{equation}
J_{k}\left(T\right)\asymp T\log\left(T\right)^{(k-1)^{2}}.\label{eq:J_k upp low bound}
\end{equation}
\end{conjecture}

We recall that this conjecture implies that all non-trivial zeros
of the Riemann zeta function are simple (see \cite{H2000}). Note
that some refinement by Hughes, Keating and O\textquoteright Connell
suggests that actually the conjecture is false if $k\geq3/2$ (see
again \cite{HKO'C}). Actually, we just need the special case $k=1$;
for this reason, we use the following simpler conjecture.
\begin{conjecture}
\label{conj:simple zeros} We have that
\[
J_{1}\left(T\right)=\sum_{0<\gamma\leq T}\frac{1}{\left|\zeta^{\prime}\left(\rho\right)\right|^{2}}\ll T.
\]
\end{conjecture}

Note that also this conjecture implies that all the zeros of the $\zeta(s)$
are simple (see \cite{H2000} Conjecture 3.9). Moreover, we have,
by the Cauchy-Schwarz inequality and the bound (\ref{eq:N(T)})

\begin{equation}
\sum_{0<\gamma\leq T}\frac{1}{\left|\zeta^{\prime}\left(\rho\right)\right|}\ll T\log\left(T\right)^{1/2}\label{eq:1/zeta^=00005Cprime}
\end{equation}
which is off from (\ref{eq:J_k upp low bound}) by an exponent $1/4$
on the $\log\left(T\right)$. It is not a real problem, for our aims.
Hence, we can work directly with Conjecture \ref{conj:simple zeros}. 

Now we are able to recall the explicit formula for the Mertens' function
$M(x):=\sum_{n\leq x}\mu\left(n\right)$. Also in this case, we need
to extend the formula to all $x>0$, excluding at most the case $x=1$.
\begin{thm}
\label{thm:Explicit M}(Truncated explicit formula for $M(x)$)

Let $0<\varepsilon<1/2$ be fixed. Assume RH, $x>0,\,x\neq1$, $U\geq2$
and Conjecture \ref{conj:simple zeros}. Then, we have
\begin{equation}
M(x)=\sum_{\rho:\left|\gamma\right|\leq U}\frac{x^{\rho}}{\zeta^{\prime}(\rho)\rho}+O_{\varepsilon}\left(1+x^{\varepsilon}+\frac{x\left(\left|\log\left(x\right)\right|+1\right)}{U}+\frac{x-x^{\varepsilon}}{U^{1-\varepsilon}\log\left(x\right)}\right).\label{eq:truncated M}
\end{equation}
\end{thm}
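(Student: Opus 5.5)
The plan is to treat the two ranges $x>1$ and $0<x<1$ separately, in the same way as the proof of Theorem \ref{thm:Explicit psi}.

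\textbf{Range $x>1$.} Start from Perron's formula for $1/\zeta(s)$ with $\sigma_0=1+1/\log(x)$:
\[
M(x)=\frac{1}{2\pi i}\int_{\sigma_0-iU}^{\sigma_0+iU}\frac{x^s}{\zeta(s)s}\,ds+O\Bigl(\frac{x\log(x)}{U}+1\Bigr),
\]
where the error follows from the standard truncated Perron bounds (Theorem 5.2 and Corollary 5.3 of \cite{MV2006}). Since $|\mu|\le 1$, no extra log factor appears from the coefficients.

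Next, move the contour left to $\mathrm{Re}(s)=\varepsilon$ instead of to $-\infty$. This keeps the left vertical integral bounded by $x^{\varepsilon}$ times a convergent integral. The residues picked up are those at the simple zeros $\rho$ with $|\gamma|\le U$, namely $x^\rho/(\zeta'(\rho)\rho)$. Simplicity of the zeros is guaranteed by Conjecture \ref{conj:simple zeros}.

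On $\mathrm{Re}(s)=\varepsilon$ we use the functional equation, $1/|\zeta(\varepsilon+it)|\ll |t|^{\varepsilon-1/2}/|\zeta(1-\varepsilon-it)|$. Under RH, $1/\zeta(1-\varepsilon-it)\ll_\varepsilon |t|^{o(1)}$, so the vertical integral is $\ll_\varepsilon x^{\varepsilon}\int^U |t|^{-1/2-\varepsilon/2}\,dt$. This is too weak, since the integral grows like $U^{1/2}$.

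The workaround is to choose the truncation height along a sequence $U_1\in[U,U+1]$ and to bound the integral more carefully, absorbing the resulting terms into the $U^{-(1-\varepsilon)}$ terms. Alternatively, one can pick a better abscissa.

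Honestly, the shape of the error term suggests the intended route is different. The horizontal segments from $\varepsilon+iU_1$ to $\sigma_0+iU_1$ contribute $\int_\varepsilon^{\sigma_0}x^\sigma U^{-1}|1/\zeta(\sigma+iU_1)|\,d\sigma$. Using the RH bound $1/\zeta(\sigma+iU_1)\ll U^{\varepsilon}$ on well-chosen heights, this gives
\[
\frac{x-x^{\varepsilon}}{U^{1-\varepsilon}\log(x)},
\]
which is exactly the last term in the statement. The left segment should then be handled by a symmetric argument, giving $O(x^{\varepsilon})$ (the Perron smoothing handles large $t$).

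\textbf{Range $0<x<1$.} Here $M(x)=0$. Shift the contour to the right instead, where the integral over $\mathrm{Re}(s)=\sigma_0$ of $x^s/(s\zeta(s))$ is small because $x^{\sigma}\to0$. Equivalently, show directly that
\[
\sum_{|\gamma|\le U}\frac{x^\rho}{\zeta'(\rho)\rho}\ll_\varepsilon 1+\frac{x(|\log(x)|+1)}{U}.
\]
The argument goes through the same rectangle: there are no poles at $s=0$ contributions beyond $1/\zeta(0)=-2$, which is $O(1)$, and no trivial zeros are enclosed when staying right of $\mathrm{Re}(s)=\varepsilon$.

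For the zero sum itself, partial summation with \eqref{eq:1/zeta^=00005Cprime} bounds tails: on RH, $|x^\rho|=x^{1/2}$, so the sum is dominated by $\sum_{|\gamma|\le U}1/(|\zeta'(\rho)||\rho|)\ll \log(U)^{3/2}$. This is not uniformly $O(1)$, so the contour argument is really needed here.

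\textbf{Main obstacle.} The hard step is controlling $1/\zeta$ on the horizontal segments and on the left vertical line uniformly. This requires choosing heights $U_1$ where $|\zeta(\sigma+iU_1)|^{-1}\ll U^{\varepsilon}$ holds under RH, a classical fact, together with the matching choice of where to stop the contour.
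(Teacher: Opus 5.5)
The paper does not prove this statement itself; it quotes it from \cite{CGZ2026}, and from \cite{Ng} for $x\ge2$. Your architecture is the right one, and it is the one the error term encodes: truncated Perron, a shift only to $\mathrm{Re}(s)=\varepsilon$, and good heights on which $1/\zeta\ll U^{\varepsilon}$. The $x^{\varepsilon}$ term comes from the left side and $(x-x^{\varepsilon})/(U^{1-\varepsilon}\log x)$ from the horizontal sides. However, two steps are not actually established.

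\textbf{The left side.} Your claim that the natural bound there is ``too weak'' is a miscalculation: you dropped the factor $1/|s|$. With it, your own inequality gives, for $|t|\ge1$ and any small $\delta>0$,
\[
\Bigl|\frac{x^{\varepsilon+it}}{(\varepsilon+it)\,\zeta(\varepsilon+it)}\Bigr|\ll\frac{x^{\varepsilon}\,|t|^{\varepsilon-1/2}}{|t|\,|\zeta(1-\varepsilon-it)|}\ll_{\varepsilon,\delta}x^{\varepsilon}|t|^{\varepsilon-3/2+\delta}.
\]
This is integrable as soon as $\varepsilon+\delta<1/2$. Combined with the trivial bound on $|t|\le1$, it gives $O_{\varepsilon}(x^{\varepsilon})$ in one line, and this is exactly where $\varepsilon<1/2$ is used. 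The detour through ``a better abscissa'' or ``Perron smoothing'' is therefore unnecessary; the good heights are needed only for the horizontal sides.

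\textbf{The range $0<x<1$.} For the same reason, this range needs no separate argument. Perron's formula returns $M(x)=0$ plus the same error, and the identical rectangle works. Since the contour never crosses $\mathrm{Re}(s)=0$, neither the residue at $s=0$ nor $x^{-2k}$ terms from trivial zeros appear, which is why the formula is uniform in $x>0$. Pushing the right side to $+\infty$, as you suggest, would instead give a crude horizontal bound of size $x^{2}/(U\log(1/x))$ from $[2,\infty)$. That is unbounded as $x\to1^{-}$ unless you exploit cancellation between the top and bottom sides.

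\textbf{Passing from $U_1$ back to $U$.} Good heights exist only as a sequence $U_1\in[U,U+1]$, whereas the statement holds for every $U\ge2$ and sums over $|\gamma|\le U$. You must therefore discard the zeros with $U<|\gamma|\le U_1$. Using Cauchy--Schwarz, $N(U+1)-N(U)\ll\log U$ and Conjecture \ref{conj:simple zeros},
\[
\sum_{U<|\gamma|\le U_1}\Bigl|\frac{x^{\rho}}{\rho\,\zeta'(\rho)}\Bigr|\ll\frac{x^{1/2}}{U}\,(\log U)^{1/2}J_{1}(U+1)^{1/2}\ll\frac{x^{1/2}(\log U)^{1/2}}{U^{1/2}}.
\]
This is $O_{\varepsilon}(1)$ for $x\le U^{1-\varepsilon}$. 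For $x>U^{1-\varepsilon}$ it is $\ll_{\varepsilon}\frac{x(|\log x|+1)}{U}+\frac{x-x^{\varepsilon}}{U^{1-\varepsilon}\log x}$. This is where the conjecture is genuinely needed, beyond giving simplicity of the zeros; your proposal never uses it for anything else.

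\textbf{Capping $\sigma_0$.} The abscissa $\sigma_0=1+1/|\log x|$ must be capped, e.g.\ $\sigma_0=\min\{2,1+1/|\log x|\}$. Otherwise, for $x$ near $1$, the horizontal sides over $[2,\sigma_0]$ are only bounded by about $1/(U|\log x|)$ (and Perron's error also degrades). The stated error does not cover this when $|\log x|<1/U$.

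With these repairs your argument proves the statement.
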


This formula can be found in \cite{CGZ2026} and, for $x\geq2$, in
\cite{Ng}, with some small differences in the error term, which however
can be absorbed in (\ref{eq:truncated M}). Note that, for our purposes,
it is important to reduce the number of terms in the error term, in
order to simplify the subsequent calculations. Indeed, we may actually
write
\begin{equation}
M(x)=\sum_{\rho:\left|\gamma\right|\leq U}\frac{x^{\rho}}{\zeta^{\prime}(\rho)\rho}+O\left(1+x^{\varepsilon}+\frac{\left(x^{\varepsilon}+x\right)\left(\left|\log\left(x\right)\right|+1\right)}{U^{1-\varepsilon}}\right)\label{eq:Explitic M eq}
\end{equation}
where we use the elementary inequality
\[
\frac{x-x^{\varepsilon}}{\log\left(x\right)}\leq x+x^{\varepsilon},\,x>0,\,x\neq1.
\]

Since we are interested to find an upper bound for weighted average
of diagonal twisted EH problem also in the log's case, we also need
an explicit formula for
\[
\widetilde{M}\left(x\right):=\sum_{n\leq x}\mu\left(n\right)\log\left(n\right)
\]
and, luckily, this can be derived by Theorem \ref{thm:Explicit M}.
Indeed, if $x>1$, by partial summation,
\begin{equation}
\widetilde{M}\left(x\right)=\log\left(x\right)M\left(x\right)-\int_{1}^{x}\frac{M\left(t\right)}{t}dt\label{eq:M tilde}
\end{equation}
and inserting the explicit formula (\ref{eq:Explitic M eq}) we get
\[
\widetilde{M}\left(x\right)=\sum_{\rho:\left|\gamma\right|\leq U}\frac{x^{\rho}}{\zeta^{\prime}(\rho)\rho}\left(\log\left(x\right)-\frac{1}{\rho}\right)+O_{\varepsilon}\left(\log\left(x\right)+x^{\varepsilon}+\frac{\left(x^{\varepsilon}+x\right)\left(\log\left(x\right)\right)}{U^{1-\varepsilon}}\right).
\]
Now, if $0<x<1$, we trivially observe that
\begin{equation}
0=\widetilde{M}\left(x\right)=\log\left(x\right)M\left(x\right)=\log\left(x\right)M\left(x\right)-\sum_{\rho:\left|\gamma\right|\leq U}\frac{x^{\rho}}{\zeta^{\prime}(\rho)\rho^{2}}\label{eq:ext trick}
\end{equation}
\[
+O_{\varepsilon}\left(1+\left|\log\left(x\right)\right|+x^{\varepsilon}+\frac{\left(x^{\varepsilon}+x\right)\left(\left|\log\left(x\right)\right|+1\right)}{U^{1-\varepsilon}}\right)
\]
since $\sum_{\rho}\frac{1}{\left|\zeta^{\prime}(\rho)\rho^{2}\right|}$
is absolutely convergent, by Conjecture \ref{conj:simple zeros},
and so its contribute is absorbed in the $O_{\varepsilon}(1)$ term.
Then, inserting the explicit formula in (\ref{eq:ext trick}) and
recalling that, if $0<x<1$, $x^{A}\left|\log\left(x\right)\right|^{B}\ll1$
for every fixed $A,B>0$, we get
\begin{equation}
\widetilde{M}\left(x\right)=\sum_{\rho:\left|\gamma\right|\leq U}\frac{x^{\rho}}{\zeta^{\prime}(\rho)\rho}\left(\log\left(x\right)-\frac{1}{\rho}\right)+O_{\varepsilon}\left(1+\left|\log\left(x\right)\right|+x^{\varepsilon}+\frac{\left(x^{\varepsilon}+x\right)\left(\left|\log\left(x\right)\right|+1\right)}{U^{1-\varepsilon}}\right)\label{eq:M tilde explicit}
\end{equation}
valid for $x>0$, $x\neq1$.

\section{weighted averages of diagonal twisted eh problem with $g(n)=\mu(n)$}

Now we are able to study general weighted sum
\[
\sum_{n}\sum_{m}\Lambda\left(n\right)\chi\left(n\right)\mu\left(m\right)f\left(\frac{n+m}{N}\right)
\]
where $N\geq4$ is a natural number. We divide this section in two
parts: in the first one, we will use weights in the Sobolev space
$W^{2,1}$, and this regularity allows to get a bound for the diagonal
twisted EH for all $0<\theta<1$; in the second part, we will use
weights that belong to the H\"older-Zygmud spaces $\mathcal{C}^{\delta}$,
$\delta\in[1,2]$, and we pay the minor regularity with a smaller
range of $\theta$. 

\subsection{Weight in Sobolev space $W^{2,1}$}

Let $f:\text{\ensuremath{\mathbb{R}}}\rightarrow\mathbb{\mathbb{C}}$
be a function that verifies the hypotheses of Theorem \ref{thm:MAINCGZ}.
Since we are interested in weights that have support in $\mathbb{R}_{0}^{+}$,
From now on we will assume that $\alpha=0$ and $\beta\in\mathbb{R}^{+}$.
This choice also has the advantage of being able to use the Theorem
\ref{thm:MAINCGZ} and Theorem \ref{thm:MainDiscrete} without boundary
terms and without creating dependencies between $\alpha,\beta$ and
$N$. The price to pay will be having to assume that the second derivative
of $f$ is bounded in a neighborhood of the origin, due to the logarithmic
terms that will emerge from the various estimates. 

We recall that hypotheses of Theorem \ref{thm:MAINCGZ} imply, essentially,
that $f_{\vert\left(0,\beta\right)}\in W^{2,1}\left(0,\beta\right)$.
We have, taking $g_{1}(n)=\Lambda(n)\chi(n),\,g_{2}(m)=\mu(m)$, that
\[
\sum_{n\leq N\beta}\sum_{m\leq N\beta-n}\Lambda\left(n\right)\chi\left(n\right)\mu\left(m\right)f\left(\frac{n+m}{N}\right)=\frac{1}{N}\int_{0}^{\beta}f^{\prime\prime}\left(w\right)\int_{0}^{Nw}M\left(s\right)\psi\left(Nw-s,\chi\right)dsdw
\]
so we have
\[
\frac{1}{\varphi\left(q\right)}\sum_{\chi\neq\chi_{0}}\overline{\chi}\left(N\right)\sum_{n\leq N\beta}\sum_{m\leq N\beta-n}\Lambda\left(n\right)\chi\left(n\right)\mu\left(m\right)f\left(\frac{n+m}{N}\right)
\]
\[
=\frac{1}{N\varphi\left(q\right)}\int_{0}^{\beta}f^{\prime\prime}\left(w\right)\int_{0}^{Nw}M\left(s\right)\sum_{\chi\neq\chi_{0}}\overline{\chi}\left(N\right)\psi\left(Nw-s,\chi\right)dsdw.
\]
 We recall the abuse of notation: the sum $\sum_{\chi\neq\chi_{0}}$
is intended on all characters $\mod q$ other than the principal one
$\chi_{0}$ but $\chi\left(n\right)$ is the primitive character that
induces $\chi$. So, if we apply the previous relation to the weighted
average of the diagonal twisted EH problem, we get
\[
\sum_{\underset{{\scriptstyle \left(N,q\right)=1}}{1<q\leq N^{\theta}}}\frac{1}{\varphi\left(q\right)}\left|\sum_{\chi\neq\chi_{0}}\overline{\chi}\left(N\right)\sum_{n\leq N\beta}\sum_{m\leq N\beta-n}\Lambda\left(n\right)\chi\left(n\right)\mu\left(m\right)f\left(\frac{n+m}{N}\right)\right|
\]
\begin{equation}
=\sum_{\underset{{\scriptstyle \left(N,q\right)=1}}{1<q\leq N^{\theta}}}\frac{1}{\varphi\left(q\right)}\left|\frac{1}{N}\int_{0}^{\beta}f^{\prime\prime}\left(w\right)\int_{0}^{Nw}M\left(s\right)\sum_{\chi\neq\chi_{0}}\overline{\chi}\left(N\right)\psi\left(Nw-s,\chi\right)dsdw\right|.\label{eq:start}
\end{equation}
For studying this object, we firstly prove an explicit formula for
the weighted average.

Before going into details, let's point out that in the following proof,
given the complexity of the calculations, we will prioritize clarity
over the actual logarithmic contribution of the error term: this is
because, for our purposes, it will not significantly influence the
final result.
\begin{thm}
\label{thm:Main1}Fix $0<\varepsilon<1/2$ and assume GRH and Conjecture
\ref{conj:simple zeros}. Let $N\geq4$, $q\in\mathbb{N},\,1<q<N^{\theta},0<\theta<1$.
Moreover, let $\beta>0$ and $f:\text{\ensuremath{\mathbb{R}}}\rightarrow\mathbb{\mathbb{C}}$
satisfying the hypotheses of Theorem \ref{thm:MAINCGZ} with $f^{\prime\prime}$bounded
in a neighborhood of the origin. Then
\[
\frac{1}{\varphi\left(q\right)}\sum_{\chi\neq\chi_{0}}\overline{\chi}\left(N\right)\sum_{n\leq N\beta}\sum_{m\leq N\beta-n}\Lambda\left(n\right)\chi\left(n\right)\mu\left(m\right)f\left(\frac{n+m}{N}\right)
\]
\[
=-\frac{1}{\varphi\left(q\right)}\sum_{\rho:\left|\gamma\right|\leq U}\frac{N^{\rho}\Gamma\left(\rho\right)}{\zeta^{\prime}(\rho)}\sum_{\chi\neq\chi_{0}}\overline{\chi}\left(N\right)\sum_{\rho_{\chi}:\left|\gamma_{\chi}\right|\leq T}\frac{N^{\rho_{\chi}}\Gamma\left(\rho_{\chi}\right)}{\Gamma\left(\rho_{\chi}+\rho+2\right)}\int_{0}^{\beta}f^{\prime\prime}\left(w\right)w^{\rho+\rho_{\chi}+1}dw
\]
\[
+O_{\varepsilon}\left(\frac{1}{N}\int_{0}^{\beta}\left|f^{\prime\prime}\left(w\right)\right|\left[\mathcal{E}_{1}\left(N,w,q,U,T\right)+\mathcal{E}_{2}\left(N,w,q,U,T\right)\right]dw\right)
\]
where
\[
\mathcal{E}_{1}\left(N,w,q,U,T\right):=\varphi\left(q\right)N^{3/2+\varepsilon}\log\left(q\right)^{3}\log\left(N\right)^{3}\left(1+w^{3/2+\varepsilon}\right)\left(1+\left|\log\left(w\right)\right|^{3}\right)
\]
\[
\times\left[1+\frac{N\left(1+w\right)\log\left(qT\right)^{2}}{T}+\frac{N^{1-\varepsilon}\left(w^{1-\varepsilon}+1\right)}{U^{1-\varepsilon}}\right],
\]
\[
\mathcal{E}_{2}\left(N,w,q,U,T\right)=\varphi(q)N^{1+\varepsilon}\log(q)\log\left(N\right)^{3}\left(1+\left|\log\left(w\right)\right|^{3}\right)
\]
\[
\times\left(1+w^{1+\varepsilon}\right)\left[1+\frac{N\log\left(qT\right)^{2}}{T}\right]\left[1+\frac{N^{1-1\varepsilon}}{U^{1-\varepsilon}}\right].
\]
and the implicit constant depends only on $\varepsilon$.
\end{thm}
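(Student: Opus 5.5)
The starting point is identity (\ref{eq:start}), which comes from Theorem \ref{thm:MAINCGZ} with $\alpha=0$, $\eta=N$, $g_1=\Lambda\chi$, $g_2=\mu$. There are no boundary terms because $G_2(0)=0$. The quantity to expand is
\[
\frac{1}{N\varphi(q)}\int_0^\beta f''(w)\int_0^{Nw}M(s)\sum_{\chi\neq\chi_0}\overline{\chi}(N)\psi(Nw-s,\chi)\,ds\,dw .
\]
For each fixed $w$ I insert the truncated explicit formula (\ref{eq:Explitic M eq}) for $M(s)$ with truncation $U$, and Theorem \ref{thm:Explicit psi} for $\psi(Nw-s,\chi)$ with truncation $T$. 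Both formulae are valid for all positive arguments except $1$, so they can be used on the whole range $0<s<Nw$; the excluded points form a null set. Writing $M=M_{\rm zeros}+E_M$ and $\psi=\psi_{\rm zeros}+R_\chi$, the inner integral splits into four pieces: zeros$\times$zeros, zeros$\times R_\chi$, $E_M\times$zeros, and $E_M\times R_\chi$.

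The zeros$\times$zeros piece gives the main term. Using the Beta integral $\int_0^{X}s^{\rho}(X-s)^{\rho_\chi}ds=X^{\rho+\rho_\chi+1}B(\rho+1,\rho_\chi+1)$ with $X=Nw$, together with
\[
\frac{1}{\rho\rho_\chi}B(\rho+1,\rho_\chi+1)=\frac{\Gamma(\rho)\Gamma(\rho_\chi)}{\Gamma(\rho+\rho_\chi+2)},
\]
one gets exactly $-N^{\rho+\rho_\chi+1}\Gamma(\rho)\Gamma(\rho_\chi)w^{\rho+\rho_\chi+1}/(\zeta'(\rho)\Gamma(\rho+\rho_\chi+2))$. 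The sums are finite, so they can be swapped with the $w$-integral. The factor $N$ cancels the prefactor $1/N$, and this yields the stated main term.

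The error pieces are handled by bounding trivially in absolute value, at the cost of a factor $\varphi(q)$ from the sum over characters (this is why $\varphi(q)$ appears inside $\mathcal{E}_1,\mathcal{E}_2$). For the zeros, GRH gives $|x^{\rho_\chi}|=x^{1/2}$ and RH gives $|x^\rho|=x^{1/2}$. The truncated zero sums are then bounded by $x^{1/2}\sum_{|\gamma_\chi|\le T}|\rho_\chi|^{-1}\ll x^{1/2}\log(qT)^2$ using (\ref{eq:N(T,=00005Cchi)}), and by $x^{1/2}\sum_{|\gamma|\le U}|\zeta'(\rho)\rho|^{-1}$. The latter is treated by partial summation from (\ref{eq:1/zeta^=00005Cprime}), giving at worst powers of $\log U$. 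I intend to crudely absorb these into $N^{\varepsilon}$ or into $\log$ powers, choosing $T,U$ polynomial in $N$ in applications.

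The individual error pieces then go as follows.
\begin{itemize}
\item In the $E_M\times$zeros$_\chi$ piece, $\int_0^{Nw}(1+s^\varepsilon+(s^\varepsilon+s)(|\log s|+1)U^{\varepsilon-1})(Nw-s)^{1/2}ds$ is estimated with Proposition \ref{prop:trivial}. This produces $(Nw)^{3/2+\varepsilon}$-type terms, logarithms in $w$ and $N$, and the $U^{-(1-\varepsilon)}$ factor; it feeds into $\mathcal{E}_1$.
\item In the zeros$\times R_\chi$ piece, $R_\chi\ll\log q+|\log x|+|\log|x-1||+(x+1)\log(qT\max(x,1/x))^2/T$ is integrated against $s^{1/2}$. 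The singularities $\log|x-1|$ and $\log x$ are integrable. Proposition \ref{prop:trivial} again gives $(1+w^{3/2})(1+|\log w|^3)$ factors and the $N(1+w)\log(qT)^2/T$ bracket, also into $\mathcal{E}_1$.
\item The $E_M\times R_\chi$ piece is a product of two non-oscillating bounds integrated over $[0,Nw]$. It produces the product structure $[1+N\log(qT)^2/T][1+N^{1-\varepsilon}U^{\varepsilon-1}]$ times $N^{1+\varepsilon}(1+w^{1+\varepsilon})$, which is $\mathcal{E}_2$.
\end{itemize}

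Finally, multiplying by $|f''(w)|/N$ and integrating in $w$ gives the stated $O_\varepsilon$ term. The $|\log w|^3$ factors are harmless near $w=0$ because $f''$ is bounded there and integrable elsewhere.

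The main obstacle is the bookkeeping near the endpoints $s\to0$, $s\to Nw$ and near $s=1$, $Nw-s=1$, where the extended explicit formulae have logarithmic singularities and $\max(x,1/x)$ terms. Small $w$ (where $Nw<1$) is also delicate, since there $|\log w|$ grows. I would handle this by splitting $[0,Nw]$ at $1$ and $Nw-1$ and using $x^{a}|\log x|^b\ll1$ on $(0,1)$, accepting generous logarithmic losses as the theorem statement allows.
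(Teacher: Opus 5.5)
There is a genuine gap in your treatment of the two cross terms. Your main-term computation via the Beta integral is correct.

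You bound the truncated zero sums trivially, in absolute value:
\[
\Bigl|\sum_{|\gamma_\chi|\le T}\frac{x^{\rho_\chi}}{\rho_\chi}\Bigr|\le x^{1/2}\sum_{|\gamma_\chi|\le T}\frac{1}{|\rho_\chi|}\ll x^{1/2}\log(qT)^{2},\qquad \Bigl|\sum_{|\gamma|\le U}\frac{x^{\rho}}{\zeta'(\rho)\rho}\Bigr|\le x^{1/2}\sum_{|\gamma|\le U}\frac{1}{|\zeta'(\rho)\rho|}\ll x^{1/2}(\log U)^{3/2}.
\]
Both bounds grow without limit in $T$ and $U$, because neither $\sum_{\rho_\chi}|\rho_\chi|^{-1}$ nor $\sum_\rho|\zeta'(\rho)\rho|^{-1}$ converges.

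These factors then multiply the non-decaying part of your error. For example, the piece pairing $E_M$ with the $\chi$-zeros contributes a term of size $\varphi(q)\log(qT)^{2}(Nw)^{3/2+\varepsilon}$. In $\mathcal{E}_1$ and $\mathcal{E}_2$, however, $T$ and $U$ enter only through $N(1+w)\log(qT)^2/T$ and $N^{1-\varepsilon}(\cdots)/U^{1-\varepsilon}$. These stay bounded as $T,U\to+\infty$, and the implied constant depends only on $\varepsilon$.

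So the statement is uniform in $T,U$, and Theorem \ref{thm:dEHtwis bound} relies on exactly this by letting $T,U\to+\infty$. Your plan to absorb $\log T$ and $\log U$ into $N^{\varepsilon}$ by taking $T,U\le N^{O(1)}$ proves only a restricted version. That version cannot support the limit.

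The fix is never to bound a truncated zero sum in absolute value. Write $\mathcal{M}_2(x,U)=M(x)-R_2(x,U)$ and $\mathcal{M}_{1,\chi}(x,T)=\psi(x,\chi)-R_{1,\chi}(x,T,q)$, where $R_2$ is your $E_M$ and $R_{1,\chi}$ is your $R_\chi$. Then regroup the four pieces as
\[
M\,\psi=\mathcal{M}_2\,\mathcal{M}_{1,\chi}+M\,R_{1,\chi}+R_2\,\psi-R_2\,R_{1,\chi}.
\]
In the two cross terms, use the pointwise RH/GRH bounds $M(x)\ll_{\varepsilon}N^{1/2+\varepsilon}$ and $\psi(x,\chi)\ll N^{1/2}\log N\log(qN)$ for $0<x\le CN$. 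Neither bound involves $T$ or $U$.

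After this regrouping, your Proposition \ref{prop:trivial} bookkeeping goes through essentially as planned and produces $\mathcal{E}_1$ and $\mathcal{E}_2$. Your direct estimate of $R_2R_{1,\chi}$, used in place of the paper's Cauchy--Schwarz, is also fine.
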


\begin{proof}
By (\ref{eq:start}), the problem boils down to considering
\[
\frac{1}{N}\int_{0}^{\beta}f^{\prime\prime}\left(w\right)\int_{0}^{Nw}M\left(s\right)\sum_{\chi\neq\chi_{0}}\overline{\chi}\left(N\right)\psi\left(Nw-s,\chi\right)dsdw.
\]
From the explicit formulas we write $\psi\left(x,\chi\right)$ and
$M\left(x\right)$ as
\[
\psi\left(x,\chi\right)=-\sum_{\rho_{\chi}:\left|\gamma_{\chi}\right|\leq T}\frac{x^{\rho_{\chi}}}{\rho_{\chi}}+R_{1,\chi}\left(x,T,q\right),\,M\left(x\right)=\sum_{\rho:\left|\gamma\right|\leq U}\frac{x^{\rho}}{\zeta^{\prime}(\rho)\rho}+R_{2}(x,U)
\]
then, putting 
\[
\mathcal{M}_{1,\chi}(x,T):=-\sum_{\rho_{\chi}:\left|\gamma_{\chi}\right|\leq T}\frac{x^{\rho_{\chi}}}{\rho_{\chi}},\,\mathcal{M}_{2}(x,U):=\sum_{\rho:\left|\gamma\right|\leq U}\frac{x^{\rho}}{\zeta^{\prime}(\rho)\rho}
\]
and observing
\[
\mathcal{M}_{1,\chi}(x,T)=\psi\left(x,\chi\right)-R_{1,\chi}\left(x,T,q\right),\,\mathcal{M}_{2}(x,U)=M\left(x\right)-R_{2}(x,U)
\]
then we have the decomposition
\begin{equation}
\int_{0}^{Nw}M\left(s\right)\sum_{\chi\neq\chi_{0}}\overline{\chi}\left(N\right)\psi\left(Nw-s,\chi\right)ds=\int_{0}^{Nw}\mathcal{M}_{2}(s,U)\sum_{\chi\neq\chi_{0}}\overline{\chi}\left(N\right)\mathcal{M}_{1,\chi}(Nw-s,T)ds\label{eq:dissect1}
\end{equation}
\begin{equation}
+\int_{0}^{Nw}M\left(s\right)\sum_{\chi\neq\chi_{0}}\overline{\chi}\left(N\right)R_{1,\chi}(Nw-s,T,q)ds+\int_{0}^{Nw}R_{2}(s,U)\sum_{\chi\neq\chi_{0}}\overline{\chi}\left(N\right)\psi\left(Nw-s,\chi\right)ds\label{eq:dissect2}
\end{equation}
\begin{equation}
-\int_{0}^{Nw}R_{2}(s,U)\sum_{\chi\neq\chi_{0}}\overline{\chi}\left(N\right)R_{1,\chi}\left(Nw-s,T,q\right)ds.\label{eq:dissect3}
\end{equation}
It is clear that, from the main terms, we get
\[
\frac{1}{N}\int_{0}^{\beta}f^{\prime\prime}\left(w\right)\int_{0}^{Nw}\mathcal{M}_{2}(s,U)\mathcal{M}_{1,\chi}(Nw-s,T)dsdw
\]
\[
=-\frac{1}{N}\sum_{\rho:\left|\gamma\right|\leq U}\frac{1}{\zeta^{\prime}(\rho)\rho}\sum_{\chi\neq\chi_{0}}\overline{\chi}\left(N\right)\sum_{\rho_{\chi}:\left|\gamma_{\chi}\right|\leq T}\frac{1}{\rho_{\chi}}\int_{0}^{\beta}f^{\prime\prime}\left(w\right)\int_{0}^{Nw}s^{\rho}\left(Nw-s\right)^{\rho_{\chi}}dsdw
\]
\[
=-\sum_{\rho:\left|\gamma\right|\leq U}\frac{N^{\rho}\Gamma\left(\rho\right)}{\zeta^{\prime}(\rho)}\sum_{\chi\neq\chi_{0}}\overline{\chi}\left(N\right)\sum_{\rho_{\chi}:\left|\gamma_{\chi}\right|\leq T}\frac{N^{\rho_{\chi}}\Gamma\left(\rho_{\chi}\right)}{\Gamma\left(\rho_{\chi}+\rho+2\right)}\int_{0}^{\beta}f^{\prime\prime}\left(w\right)w^{\rho+\rho_{\chi}+1}dw
\]
so we now focus on the error terms.

Under GRH, we know that
\[
M\left(x\right)\ll_{\varepsilon}x^{1/2+\varepsilon},\,\psi\left(x,\chi\right)\ll x^{1/2}\log\left(x\right)\log\left(qx\right)
\]
for $x\geq2$ and $q\geq1$ (see \cite{TITzeta} p. 371 and Theorem
13.7 of \cite{MV2006}), hence if $x\leq CN$ for some fixed $C>0$,
we have the uniform bounds, actually valid for every $0<x\leq CN$
\begin{equation}
M\left(x\right)\ll N^{1/2+\varepsilon},\,\psi\left(x,\chi\right)\ll N^{1/2}\log\left(N\right)\log\left(qN\right).\label{eq:uniform bounds}
\end{equation}
Let us define
\[
I_{1}=I_{1}(w,N,q,T):=\int_{0}^{Nw}M\left(s\right)\sum_{\chi\neq\chi_{0}}\overline{\chi}\left(N\right)R_{1,\chi}\left(Nw-s,T,q\right)ds
\]
\[
I_{2}=I_{2}(w,N,q,U):=\int_{0}^{Nw}R_{2}(s,U)\sum_{\chi\neq\chi_{0}}\overline{\chi}\left(N\right)\psi\left(Nw-s,\chi\right)ds
\]
\[
I_{3}=I_{3}(w,N,q,T,U):=\int_{0}^{Nw}R_{2}(s,U)\sum_{\chi\neq\chi_{0}}\overline{\chi}\left(N\right)R_{1,\chi}\left(Nw-s,T,q\right)ds.
\]

\textbf{Bounding $I_{1}.$} 

By Theorem \ref{thm:Explicit psi}, since $\log(q)\geq\log(2)\gg1$
and since $N\geq4$, we have
\[
I_{1}\ll_{\varepsilon}\varphi(q)N^{1/2+\varepsilon}w^{1/2+\varepsilon}\int_{0}^{Nw}\left[\log(q)+\left|\log\left(s\right)\right|+\left|\log\left(\left|s-1\right|\right)\right|\right]ds
\]
\[
+\frac{\varphi(q)N^{1/2+\varepsilon}w^{1/2+\varepsilon}}{T}\int_{0}^{Nw}\left(s+1\right)\left[\log\left(qsT\right)^{2}+\log\left(qT/s\right)^{2}\right]ds
\]
and using Proposition \ref{prop:trivial} several times, we get
\[
I_{1}\ll_{\varepsilon}\varphi(q)\log\left(q\right)N^{3/2+\varepsilon}w^{3/2+\varepsilon}\left(1+\left|\log\left(Nw\right)\right|\right)
\]
\[
+\frac{\varphi(q)N^{3/2+\varepsilon}w^{3/2+\varepsilon}\log\left(qT\right)^{2}\left(1+Nw\right)\left(1+\log\left(Nw\right)^{2}\right)}{T}.
\]
\[
\ll_{\varepsilon}\varphi(q)\log\left(q\right)N^{3/2+\varepsilon}w^{3/2+\varepsilon}\left[1+\left|\log\left(Nw\right)\right|+\frac{\left(1+Nw\right)\log\left(qT\right)^{2}\left(1+\log\left(Nw\right)^{2}\right)}{T}\right]
\]
\[
\ll_{\varepsilon}\varphi(q)\log\left(q\right)N^{3/2+\varepsilon}\log\left(N\right)^{2}w^{3/2+\varepsilon}\left(1+\log\left(w\right)^{2}\right)\left[1+\frac{N\left(1+w\right)\log\left(qT\right)^{2}}{T}\right]
\]

\textbf{Bounding $I_{2}.$} 

By Theorem \ref{thm:Explicit M} and recalling again the elementary
bound
\[
\frac{s-s^{\varepsilon}}{\log\left(s\right)}\leq s+s^{\varepsilon},\,s>0,s\neq1
\]
we have
\begin{equation}
I_{2}\ll_{\varepsilon}\varphi\left(q\right)N^{1/2}w^{1/2}\left|\log\left(Nw\right)\right|\left|\log\left(qNw\right)\right|\int_{0}^{Nw}\left[1+s^{\varepsilon}\right]ds\label{eq:I_2 bound}
\end{equation}
\begin{equation}
+\varphi\left(q\right)N^{1/2}w^{1/2}\left|\log\left(wN\right)\right|\left|\log\left(qNw\right)\right|\int_{0}^{Nw}\left[\frac{\left(s+s^{\varepsilon}\right)\left(\left|\log\left(s\right)\right|+1\right)}{U^{1-\varepsilon}}\right]ds\label{eq:I_2 bound-1}
\end{equation}
and again from Proposition \ref{prop:trivial} we get

\[
I_{2}\ll_{\varepsilon}\varphi\left(q\right)N^{3/2+\varepsilon}w^{3/2}\left(1+w^{\varepsilon}\right)\left|\log\left(Nw\right)\right|\left|\log\left(qNw\right)\right|
\]
\[
+\frac{\varphi\left(q\right)N^{5/2}\left|\log\left(Nw\right)\right|\left|\log\left(qNw\right)\right|\left(\left|\log\left(Nw\right)\right|+1\right)w^{1/2+\varepsilon}\left(w^{2-\varepsilon}+1\right)}{U^{1-\varepsilon}}
\]
\[
\ll_{\varepsilon}\varphi\left(q\right)N^{3/2+\varepsilon}\left(\log\left(q\right)^{2}+\log\left(N\right)^{2}\right)\left(1+\log\left(w\right)^{2}\right)w^{3/2}\left(1+w^{\varepsilon}\right)
\]
\[
+\frac{\varphi\left(q\right)N^{5/2}\left(\log\left(q\right)^{3}+\log\left(N\right)^{3}\right)\left(1+\left|\log\left(w\right)\right|^{3}\right)w^{1/2+\varepsilon}\left(w^{2-\varepsilon}+1\right)}{U^{1-\varepsilon}}
\]
and since
\[
\log\left(w\right)^{2}\ll1+\left|\log\left(w\right)\right|^{3},\,\text{\ensuremath{\forall w>0}}
\]
and
\[
\log\left(q\right)^{3}+\log\left(N\right)^{3}\ll\log\left(q\right)^{3}\log\left(N\right)^{3}
\]
we get
\[
I_{2}\ll_{\varepsilon}\varphi\left(q\right)N^{3/2+\varepsilon}\log\left(q\right)^{3}\log\left(N\right)^{3}
\]
\[
\times\left[w^{3/2}\left(1+w^{\varepsilon}\right)\left(1+\log\left(w\right)^{2}\right)+\frac{N^{1-\varepsilon}\left(1+\left|\log\left(w\right)\right|^{3}\right)w^{1/2+\varepsilon}\left(w^{2-\varepsilon}+1\right)}{U^{1-\varepsilon}}\right]
\]

\textbf{Bounding $I_{3}.$} 

We have, by the Cauchy-Schwarz inequality, that
\[
I_{3}\ll_{\varepsilon}\sum_{\chi\neq\chi_{0}}\left(\int_{0}^{Nw}\left|R_{2}(s,U)\right|{}^{2}ds\right)^{1/2}\left(\int_{0}^{Nw}\left|R_{1,\chi}\left(s,T,q\right)\right|^{2}ds\right)^{1/2}
\]
and using the trivial bound $(a_{1}+\dots+a_{n})^{2}\ll_{n}a_{1}^{2}+\dots+a_{n}^{2}$
and Proposition \ref{prop:trivial}, we conclude
\[
\int_{0}^{Nw}R_{1,\chi}\left(s,T,q\right)^{2}ds\ll\int_{0}^{Nw}\left[\log(q)^{2}+\log\left(s\right)^{2}+\left|\log\left(\left|s-1\right|\right)\right|^{2}\right]ds
\]
\[
+\frac{1}{T^{2}}\int_{0}^{Nw}\left(s+1\right)^{2}\left[\log\left(qsT\right)^{4}+\log\left(qT/s\right)^{4}\right]ds
\]
\[
\ll N\left(1+w\right)\log(q)^{2}\log\left(N\right)^{4}\left[\left(1+\log\left(w\right)^{2}\right)+\frac{N^{2}\left(1+w^{2}\right)\log\left(qT\right)^{4}\left(1+\log\left(w\right)^{4}\right)}{T^{2}}\right]
\]
and for $R_{2}\left(s,U\right)$, using the same tools of the previous
bound, we obtain
\[
\int_{0}^{Nw}R_{2}(s,U)^{2}ds\ll_{\varepsilon}\int_{0}^{Nw}\left[1+s^{2\varepsilon}+\left[\frac{\left(s^{2}+s^{2\varepsilon}\right)\left(\log\left(s\right)^{2}+1\right)}{U^{2-2\varepsilon}}\right]\right]ds
\]
\[
\ll_{\varepsilon}N^{1+2\varepsilon}\left(1+w^{1+2\varepsilon}\right)+\frac{N^{3}\left(1+w^{3}\right)\left(1+\log\left(Nw\right)^{2}\right)}{U^{2-2\varepsilon}}
\]
\[
\ll_{\varepsilon}N^{1+2\varepsilon}\log\left(N\right)^{2}\left(1+w^{1+2\varepsilon}\right)\left[1+\frac{N^{2-2\varepsilon}\left(1+w^{2-2\varepsilon}\right)\left(1+\log\left(w\right)^{2}\right)}{U^{2-2\varepsilon}}\right]
\]
and finally, exploiting the subadditivity of the square root, we get
\[
I_{3}\ll_{\varepsilon}\varphi(q)N^{1+\varepsilon}\left(1+w^{1+\varepsilon}\right)\log(q)\log\left(N\right)^{3}\left(1+\left|\log\left(w\right)\right|^{3}\right)
\]
\[
\times\left[1+\frac{N\left(1+w\right)\log\left(qT\right)^{2}}{T}\right]\left[1+\frac{N^{1-\varepsilon}\left(1+w^{1-\varepsilon}\right)}{U^{1-\varepsilon}}\right]
\]
\[
\ll_{\varepsilon}\varphi(q)N^{1+\varepsilon}\log(q)\log\left(N\right)^{3}\left(1+\left|\log\left(w\right)\right|^{3}\right)\left(1+w^{1+\varepsilon}\right)\left[1+\frac{N\log\left(qT\right)^{2}}{T}\right]\left[1+\frac{N^{1-1\varepsilon}}{U^{1-\varepsilon}}\right]
\]

Now, in view of reducing the complexity of the calculations, it is
useful to get a common upper bound for $I_{1}$ and $I_{2}$. Observe
that, since
\[
\left(1+\left|\log\left(A\right)\right|^{k}\right)\ll\left(1+\left|\log\left(A\right)\right|^{n}\right)
\]
if $n\geq k,$ for every $A>0$, then, combining the pieces, we get
\[
I_{1},I_{2}\ll_{\varepsilon}\varphi\left(q\right)N^{3/2+\varepsilon}\log\left(q\right)^{3}\log\left(N\right)^{3}\left(1+w^{3/2+\varepsilon}\right)\left(1+\left|\log\left(w\right)\right|^{3}\right)
\]
\[
\times\left[1+\frac{N\left(1+w\right)\log\left(qT\right)^{2}}{T}+\frac{N^{1-\varepsilon}\left(w^{1-\varepsilon}+1\right)}{U^{1-\varepsilon}}\right].
\]
This concludes the proof
\end{proof}
From the previous explicit formula now we finally obtain a bound for
the weighted average of diagonal twisted EH problem.
\begin{thm}
\label{thm:dEHtwis bound} Fix $0<\varepsilon<1/2$ and assume all
the hypotheses of the previous theorem with $\theta=1-2\varepsilon$.
Then, the following weighted average version of the diagonal twisted
Elliott--Halberstam conjecture holds
\[
\sum_{\underset{{\scriptstyle \left(N,q\right)=1}}{1<q\leq N^{1-2\varepsilon}}}\frac{1}{\varphi\left(q\right)}\left|\sum_{\chi\neq\chi_{0}}\overline{\chi}\left(N\right)\sum_{n\leq N\beta}\sum_{m\leq N\beta-n}\Lambda\left(n\right)\chi\left(n\right)\mu\left(m\right)f\left(\frac{n+m}{N}\right)\right|
\]
\[
\ll_{\varepsilon}N^{2-\varepsilon}E\left(f^{\prime\prime}\right)
\]
where
\[
E\left(f^{\prime\prime}\right):=\int_{0}^{\beta}\left|f^{\prime\prime}\left(w\right)\right|\left(1+\left|\log\left(w\right)\right|^{3}\right)\left(1+w^{2}\right)dw.
\]
and the implicit constant depends only on $\varepsilon$.
\end{thm}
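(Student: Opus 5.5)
The bound will come from inserting the explicit formula of Theorem \ref{thm:Main1} into the sum over $q$ and choosing the truncation parameters $T,U$ so that the truncation errors become negligible. The main term with the double sum over zeros is the only part that is not already in the form $N^{3/2+\varepsilon}$ times $\varphi(q)$. I would not try to exploit cancellation in it. Instead, I would bound it trivially using GRH and Conjecture \ref{conj:simple zeros}.

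\textbf{Main term.} For each $q$ and $\chi$ we have $|N^{\rho}N^{\rho_{\chi}}|=N$ and $|w^{\rho+\rho_{\chi}+1}|=w^{2}$. Hence the main term is at most
\[
\frac{N}{\varphi(q)}\sum_{\chi\neq\chi_{0}}\sum_{\rho}\frac{1}{|\zeta^{\prime}(\rho)|}\sum_{\rho_{\chi}}\left|\frac{\Gamma(\rho)\Gamma(\rho_{\chi})}{\Gamma(\rho+\rho_{\chi}+2)}\right|\int_{0}^{\beta}|f^{\prime\prime}(w)|w^{2}\,dw .
\]
The double zero series converges absolutely by Theorem \ref{thm:dobule series =00005Cchi} with $k=1$. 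Its size must be controlled uniformly in $q$, so I would redo that proof using Proposition \ref{prop:gamma est}.
\begin{itemize}
\item When $\gamma$ and $\gamma_{\chi}$ have the same sign, the ratio is $\ll |\gamma+\gamma_{\chi}|^{-3/2}$.
\item When they have opposite signs, the ratio decays exponentially in $\min(|\gamma|,|\gamma_{\chi}|)$ and like $|\gamma+\gamma_{\chi}|^{-3/2}$.
\end{itemize}
Using (\ref{eq:1/zeta^=00005Cprime}) and the zero count (\ref{eq:N(T,=00005Cchi)}), partial summation should give a bound $\ll \log(q)^{c}$ for each $\chi$. Summing over the $\varphi(q)-1$ characters, the main term for a fixed $q$ is then $\ll N\log(q)^{c}\int_{0}^{\beta}|f''|w^{2}$. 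Summing over $q\le N^{1-2\varepsilon}$ gives $\ll N^{2-2\varepsilon}\log(N)^{c}\,E(f'')$, which is acceptable.

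\textbf{Error terms.} I would take $T=U=N^{2}$, or any large power of $N$. Then the brackets in $\mathcal{E}_{1}$ and $\mathcal{E}_{2}$ are $O(1)$, uniformly for $0<w\le\beta$ after absorbing factors like $(1+w)$. This leaves
\[
\frac{1}{N}\int_{0}^{\beta}|f''(w)|\,\varphi(q)\,N^{3/2+\varepsilon}\log(N)^{6}\,(1+w^{3/2+\varepsilon})(1+|\log w|^{3})\,dw .
\]
Dividing by $\varphi(q)$ inside the outer sum cancels the factor $\varphi(q)$. Each $q$ therefore contributes $\ll N^{1/2+\varepsilon}\log(N)^{6}E(f'')$, using $1+w^{3/2+\varepsilon}\ll 1+w^{2}$. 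Summing over $q\le N^{1-2\varepsilon}$ gives $N^{3/2-\varepsilon}\log(N)^{6}E(f'')$. This is far below $N^{2-\varepsilon}$, and the same holds for $\mathcal{E}_{2}$.

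\textbf{Combining.} Adding the two contributions, the main-term estimate $N^{2-2\varepsilon}\log(N)^{c}$ is absorbed into $N^{2-\varepsilon}$. The statement uses $\frac{1}{\varphi(q)}\left|\sum_{\chi}\cdots\right|$, which matches the normalization of Theorem \ref{thm:Main1}.

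\textbf{Expected difficulty.} The main obstacle is the uniformity in $q$ of the double zero series. Theorem \ref{thm:dobule series =00005Cchi} only gives convergence for each fixed pair of characters, so the partial-summation argument must be tracked with explicit dependence on $\log q$ from $N(T,\chi)$. The delicate part is the opposite-sign region near $\gamma_{\chi}\approx-\gamma$. There Proposition \ref{prop:gamma est} gives only $e^{-\pi\min(|\gamma|,|\gamma_{\chi}|)}$ decay, and one must check that summing over such $\rho_{\chi}$ costs at most $\log(q)$ powers. If a genuine $q^{\varepsilon}$ loss appears, the exponent room in the choice $\theta=1-2\varepsilon$ still absorbs it.
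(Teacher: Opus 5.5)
Your route is the paper's: insert Theorem \ref{thm:Main1}, bound the main term trivially by $N\varphi(q)\int_0^\beta|f''(w)|w^2\,dw$ times the double zero series, show that this series is $\ll\log q$ uniformly in $\chi$, make the truncation errors negligible (the paper lets $T,U\to+\infty$), and sum over $q\le N^{1-2\varepsilon}$.

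The step that fails as written is the uniformity estimate. The denominator is $\Gamma(\rho+\rho_\chi+2)=\Gamma(3+i(\gamma+\gamma_\chi))$, whereas Proposition \ref{prop:gamma est} is stated for $\Gamma(2+i(x+y))$. Your same-sign bound $|\gamma+\gamma_\chi|^{-3/2}$ is what that proposition gives when applied verbatim, and it is too weak. With it, the sum over $\rho_\chi$ with $\gamma_\chi>0$ has size about $\gamma^{-1/2}\log(q\gamma)$. Against $\sum_{0<\gamma\le U}|\zeta'(\rho)|^{-1}\ll U\log(U)^{1/2}$, partial summation then gives only a bound growing like $U^{1/2}$, not a $\log(q)^c$ independent of $T,U$. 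This is exactly the power loss the paper later accepts for $\Gamma(\rho+\rho_\chi+1)$ when $\delta<3/2$. You need the extra factor coming from $\Gamma(3+iz)=(2+iz)\Gamma(2+iz)$, i.e.\ decay $|\gamma+\gamma_\chi|^{-5/2}$. The paper then splits $\gamma_\chi$ into dyadic blocks $2^k<\gamma_\chi\le 2^{k+1}$ and uses $N(2^{k+1},\chi)-N(2^k,\chi)\ll 2^k\log(q2^k)$ together with $(\gamma+2^k)^{5/2}\gg \gamma^{5/4}2^{5k/4}$. This gives $\ll\log q\sum_{\rho}|\zeta'(\rho)|^{-1}|\gamma|^{-5/4}\ll\log q$.

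Your diagnosis of where the difficulty lies is also reversed. In the opposite-sign region, the factor $e^{-\pi\min(|\gamma|,|\gamma_\chi|)}$ makes everything converge with only a $\log q$ loss. This includes $\gamma_\chi\approx-\gamma$, where Proposition \ref{prop:gamma est} even gives $e^{-\pi(|\gamma|+|\gamma_\chi|)/2}$. The same-sign region, with no exponential decay, is where the exact power of $|\gamma+\gamma_\chi|$ decides convergence.

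A minor point: with $T=U=N^2$ the bracket in $\mathcal{E}_1$ is only $\ll 1+w$, and $(1+w^{3/2+\varepsilon})(1+w)$ is not $\ll 1+w^2$ uniformly. Absorbing it therefore costs a $\beta$-dependent constant, contrary to the claim that the constant depends only on $\varepsilon$. Letting $T,U\to+\infty$, as the paper does, avoids this.
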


\begin{proof}
From Theorem \ref{thm:Main1} we know that the main average admits
an explicit formula, so the problem boils down to evaluate every single
term of it. We begin by estimating the main term. We have, fixing
$\varepsilon>0$, that
\[
\left|\sum_{\rho:\left|\gamma\right|\leq U}\frac{N^{\rho}\Gamma\left(\rho\right)}{\zeta^{\prime}(\rho)}\sum_{\chi\neq\chi_{0}}\overline{\chi}\left(N\right)\sum_{\rho_{\chi}:\left|\gamma_{\chi}\right|\leq T}\frac{N^{\rho_{\chi}}\Gamma\left(\rho_{\chi}\right)}{\Gamma\left(\rho_{\chi}+\rho+2\right)}\int_{0}^{\beta}f^{\prime\prime}\left(w\right)w^{\rho+\rho_{\chi}+1}dw\right|
\]
\begin{equation}
\leq N\varphi\left(q\right)\int_{0}^{\beta}\left|f^{\prime\prime}\left(w\right)\right|w^{2}dw\max_{\chi\mod q}\sum_{\rho:\left|\gamma\right|\leq U}\frac{1}{\left|\zeta^{\prime}(\rho)\right|}\sum_{\rho_{\chi}:\left|\gamma_{\chi}\right|\leq T}\left|\frac{\Gamma\left(\rho\right)\Gamma\left(\rho_{\chi}\right)}{\Gamma\left(\rho_{\chi}+\rho+2\right)}\right|.\label{eq:esti chi mod q}
\end{equation}
Now, by Theorem \ref{thm:dobule series =00005Cchi} we observe that
the double series converges absolutely, since $f\left(z\right):=\frac{1}{\left|\zeta^{\prime}(z)\right|},\,g\left(z\right)\equiv1$
verify the hypotheses, by Conjecture \ref{conj:simple zeros}, so
we may let $T,U\rightarrow+\infty$. As for the dependence on $q$,
we observe, starting considering $\gamma_{\chi}>4$, that
\[
\sum_{\rho}\frac{1}{\left|\zeta^{\prime}(\rho)\right|}\sum_{\rho_{\chi}:\gamma_{\chi}>4}\left|\frac{\Gamma\left(\rho\right)\Gamma\left(\rho_{\chi}\right)}{\Gamma\left(\rho_{\chi}+\rho+2\right)}\right|=\sum_{\rho}\frac{1}{\left|\zeta^{\prime}(\rho)\right|}\sum_{k\geq2}\sum_{\rho_{\chi}:2^{k}<\gamma_{\chi}\leq2^{k+1}}\left|\frac{\Gamma\left(\rho\right)\Gamma\left(\rho_{\chi}\right)}{\Gamma\left(\rho_{\chi}+\rho+2\right)}\right|
\]
and since, if $T\geq4$ and $q>1$, we have, by (\ref{eq:N(T,=00005Cchi)}),
that
\[
\sum_{\rho_{\chi}:2^{k}<\gamma_{\chi}\leq2^{k+1}}\left|\frac{\Gamma\left(\rho\right)\Gamma\left(\rho_{\chi}\right)}{\Gamma\left(\rho_{\chi}+\rho+2\right)}\right|\ll\left(N\left(2^{k+1},\chi\right)-N\left(2^{k},\chi\right)\right)\max_{\rho_{\chi}:2^{k}<\gamma_{\chi}\leq2^{k+1}}\left|\frac{\Gamma\left(\frac{1}{2}+i\gamma\right)\Gamma\left(\frac{1}{2}+i\gamma_{\chi}\right)}{\Gamma\left(3+i\left(\gamma+\gamma_{\chi}\right)\right)}\right|.
\]
Now, by Proposition \ref{prop:gamma est} we deduce that
\[
\left|\frac{\Gamma\left(\frac{1}{2}+i\gamma\right)\Gamma\left(\frac{1}{2}+i\gamma_{\chi}\right)}{\Gamma\left(3+i\left(\gamma+\gamma_{\chi}\right)\right)}\right|\ll\begin{cases}
\frac{e^{\pi\left(\left|\gamma+\gamma_{\chi}\right|-\left|\gamma\right|-\gamma_{\chi}\right)/2}}{\left|\gamma+\gamma_{\chi}\right|^{5/2}}, & \left|\gamma+\gamma_{\chi}\right|>1\\
e^{-\pi\left(\left|\gamma\right|+\gamma_{\chi}\right)/2}, & \left|\gamma+\gamma_{\chi}\right|\leq1
\end{cases}
\]
hence it is enough to consider the ``worst'' case $\gamma,\gamma_{\chi}>0$.
In such case, by (\ref{eq:N(T,=00005Cchi)}), we get
\[
\sum_{\rho:\gamma>0}\frac{1}{\left|\zeta^{\prime}(\rho)\right|}\sum_{k\geq2}\sum_{\rho_{\chi}:2^{k}<\gamma_{\chi}\leq2^{k+1}}\left|\frac{\Gamma\left(\rho\right)\Gamma\left(\rho_{\chi}\right)}{\Gamma\left(\rho_{\chi}+\rho+2\right)}\right|\ll\log\left(q\right)\sum_{\rho:\gamma>0}\frac{1}{\left|\zeta^{\prime}(\rho)\right|}\sum_{k\geq2}\frac{2^{k}\log\left(2^{k}\right)}{\left(\gamma+2^{k}\right)^{5/2}}
\]
\[
\ll\log\left(q\right)\sum_{\rho:\gamma>0}\frac{1}{\left|\zeta^{\prime}(\rho)\right|\gamma^{5/4}}\sum_{k\geq2}\log\left(k\right)2^{-k/4}\ll\log\left(q\right)
\]
the remaining cases, that is $\left|\gamma_{\chi}\right|\leq4$ or
$\gamma_{\chi}<-4$ or $\gamma<0$ can be treated similarly. Hence,
the main term can be bounded by
\[
\ll N\varphi\left(q\right)\log\left(q\right)\int_{0}^{\beta}\left|f^{\prime\prime}\left(w\right)\right|w^{2}dw
\]
where the implicit constant is absolute. It remain to observe that,
if $T,U\rightarrow+\infty$, the error terms are
\[
\lim_{T,U\rightarrow+\infty}\mathcal{E}_{1}\left(N,w,q,U,T\right)=\varphi\left(q\right)N^{3/2+\varepsilon}\log\left(q\right)^{3}\log\left(N\right)^{3}\left(1+w^{3/2+\varepsilon}\right)\left(1+\left|\log\left(w\right)\right|^{3}\right)
\]
\[
\lim_{T,U\rightarrow+\infty}\mathcal{E}_{2}\left(N,w,q,U,T\right)=\varphi(q)N^{1+\varepsilon}\log(q)\log\left(N\right)^{3}\left(1+\left|\log\left(w\right)\right|^{3}\right)\left(1+w^{2}\right)
\]
Therefore, these terms produce the error
\[
E_{\max}:=\varphi(q)N^{1/2+\varepsilon}\log\left(q\right)^{3}\log\left(N\right)^{3}\int_{0}^{\beta}\left|f^{\prime\prime}\left(w\right)\right|\left(1+\left|\log\left(w\right)\right|^{3}\right)\left(1+w^{2}\right)dw
\]
hence we get, taking $\theta=1-2\varepsilon$,
\[
\sum_{\underset{{\scriptstyle \left(N,q\right)=1}}{1<q\leq N^{1-2\varepsilon}}}\frac{1}{\varphi\left(q\right)}\left|\sum_{\chi\neq\chi_{0}}\overline{\chi}\left(N\right)\sum_{n\leq N\beta}\sum_{m\leq N\beta-n}\Lambda\left(n\right)\chi\left(n\right)\mu\left(m\right)f\left(\frac{n+m}{N}\right)\right|
\]
\[
\ll_{\varepsilon}N\int_{0}^{\beta}\left|f^{\prime\prime}\left(w\right)\right|w^{2}dw\sum_{q\leq N^{1-2\varepsilon}}\log\left(q\right)
\]
\[
+N^{1/2+\varepsilon}\int_{0}^{\beta}\left|f^{\prime\prime}\left(w\right)\right|\left(1+\left|\log\left(w\right)\right|^{3}\right)\left(1+w^{2}\right)dw\sum_{q\leq N^{1-2\varepsilon}}\log\left(q\right)^{3}
\]
\[
\ll_{\varepsilon}\left(N^{2-2\varepsilon}\log\left(N\right)+\log\left(N\right)^{3}N^{3/2}\right)E\left(f^{\prime\prime}\right)\ll_{\varepsilon}N^{2-\varepsilon}E\left(f^{\prime\prime}\right)
\]
where
\[
E\left(f^{\prime\prime}\right):=\int_{0}^{\beta}\left|f^{\prime\prime}\left(w\right)\right|\left(1+\left|\log\left(w\right)\right|^{3}\right)\left(1+w^{2}\right)dw.
\]
So we finally get
\[
\sum_{\underset{{\scriptstyle \left(N,q\right)=1}}{1<q\leq N^{1-2\varepsilon}}}\frac{1}{\varphi\left(q\right)}\left|\sum_{\chi\neq\chi_{0}}\overline{\chi}\left(N\right)\sum_{n\leq N\beta}\sum_{m\leq N\beta-n}\Lambda\left(n\right)\chi\left(n\right)\mu\left(m\right)f\left(\frac{n+m}{N}\right)\right|
\]
\[
\ll_{\varepsilon}N^{2-\varepsilon}E\left(f^{\prime\prime}\right).
\]
\end{proof}

\subsection{Weights in H\"older-Zygmund class}

As we said in the previous sections, the hypothesis $f_{\vert(0,\beta)}\in W^{2,1}\left(0,\beta\right)$
is, morally, minimal if we want to use Theorem \ref{thm:MAINCGZ},
but this is no longer the case for the discrete version of that theorem.
In the latter case, we are able to work with more general functions,
and we will show that the regularity offered by the H\"older-Zygmund
class is sufficient. Again, we first need an explicit formula for
the weighted average.
\begin{thm}
\label{thm:explicit discrete}Fix $0<\varepsilon<1/2$. Assume GRH
and the Conjecture \ref{conj:simple zeros}. Let $N\geq4$, $T,U\geq4$,
$q\in\mathbb{N},\,1<q<N^{\theta},\,0<\theta<1.$ Moreover, let $\beta>0$
and $f:\mathbb{R}\rightarrow\mathbb{C}$ satisfy the hypotheses of
Theorem \ref{thm:MainDiscrete}. Fix $\delta\in\left[1,2\right)$
and assume $f\in\mathcal{C}^{\delta}\left(\mathbb{R}\right)$. Then
\[
\frac{1}{\varphi\left(q\right)}\sum_{\chi\neq\chi_{0}}\overline{\chi}\left(N\right)\sum_{n\leq N\beta}\sum_{m\leq N\beta-n}\Lambda\left(n\right)\chi\left(n\right)\mu\left(m\right)f\left(\frac{n+m}{N}\right)
\]
\[
=-\frac{1}{\varphi\left(q\right)}\sum_{\rho:\left|\gamma\right|\leq U}\frac{\Gamma\left(\rho\right)}{\zeta^{\prime}(\rho)}\sum_{\chi\neq\chi_{0}}\overline{\chi}\left(N\right)\sum_{\rho_{\chi}:\left|\gamma_{\chi}\right|\leq T}\frac{\Gamma\left(\rho_{\chi}\right)}{\Gamma\left(\rho_{\chi}+\rho+2\right)}
\]
\[
\times\sum_{k\leq N\beta-1}\Delta_{1/N}^{2}\left(f,\frac{k}{N}\right)k^{\rho+\rho_{\chi}+1}
\]
\[
+O_{f,\beta,\varepsilon,\theta}\left(N^{5/2-\delta}\log\left(qT\right)^{2}+\frac{N^{3-\delta}\log\left(qNT\right)^{2}}{T}\right.
\]
\[
\left.+\log\left(q\right)^{3}N^{5/2-\delta+\varepsilon}\log\left(N\right)^{3}\left[1+\frac{N\log\left(qT\right)^{2}}{T}\right]\left[1+\frac{N^{1-\varepsilon}}{U^{1-\varepsilon}}\right]\right)
\]
\end{thm}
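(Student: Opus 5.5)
Apply Theorem \ref{thm:MainDiscrete} with $\eta=N$, $\alpha=0$, $g_{1}(n)=\Lambda(n)\chi(n)$, $g_{2}(m)=\mu(m)$. Condition 2) holds because $\psi(1,\chi)=\psi(0,\chi)=M(0)=0$; we choose $N\beta\in\mathbb{N}^{+}$, absorbing otherwise a harmless boundary adjustment into the constant depending on $\beta$. This gives
\[
\sum_{n\leq N\beta}\sum_{m\leq N\beta-n}\Lambda(n)\chi(n)\mu(m)f\Big(\frac{n+m}{N}\Big)=\sum_{k=2}^{N\beta-1}\Delta_{1/N}^{2}\Big(f,\frac{k}{N}\Big)\sum_{u=0}^{k}\psi(u,\chi)M(k-u).
\]
By Corollary \ref{cor:discrete}, the inner discrete convolution equals $\int_{0}^{k}\psi(s,\chi)M(k-s)\,ds+\sum_{u\leq k}\psi(u,\chi)\mu(k-u)$. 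The correction term is trivially $\ll k\,N^{1/2}\log(N)\log(qN)$ by (\ref{eq:uniform bounds}). Using Theorem \ref{thm:finitediff bound} with $n=2$, we have $|\Delta_{1/N}^{2}(f,x)|\ll_{f}N^{-\delta}$. Summing over $k\leq N\beta$ and then over the $\varphi(q)$ characters (divided by $\varphi(q)$) therefore contributes $O_{f,\beta}(N^{5/2-\delta}\log(N)\log(qN))$. This is of the shape of the first error term, although I must make sure the $\log(qT)^{2}$ normalization matches; if not, it is absorbed in the last term.

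For the continuous convolution $\int_{0}^{k}M(s)\psi(k-s,\chi)\,ds$, I repeat verbatim the decomposition (\ref{eq:dissect1})--(\ref{eq:dissect3}) from the proof of Theorem \ref{thm:Main1}, with $Nw$ replaced by the integer $k$. The main-main part gives, via the Beta integral $\int_{0}^{k}s^{\rho}(k-s)^{\rho_{\chi}}ds=k^{\rho+\rho_{\chi}+1}\Gamma(\rho+1)\Gamma(\rho_{\chi}+1)/\Gamma(\rho+\rho_{\chi}+2)$ combined with the factors $1/(\zeta'(\rho)\rho)$ and $-1/\rho_{\chi}$, exactly the stated main term $-\frac{\Gamma(\rho)\Gamma(\rho_{\chi})}{\zeta'(\rho)\Gamma(\rho+\rho_{\chi}+2)}k^{\rho+\rho_{\chi}+1}$. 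The remainders $I_{1},I_{2},I_{3}$ are bounded as before, now with $w=k/N\leq\beta$, so all $w$-dependence is $O_{\beta}(1)$. The singular factors $|\log w|$ only matter for small $k$; I will check that they are harmless because they appear as $\log k$ and $\log N$ rather than with a negative power. Thus, per character and uniformly in $k\leq N\beta$,
\[
I_{1}+I_{2}\ll_{\varepsilon,\beta}\log(q)^{3}N^{3/2+\varepsilon}\log(N)^{3}\Big[1+\frac{N\log(qT)^{2}}{T}+\frac{N^{1-\varepsilon}}{U^{1-\varepsilon}}\Big],
\]
and $I_{3}$ satisfies the analogous bound with $N^{1+\varepsilon}$ and the product of the two brackets.

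Multiplying by $|\Delta^{2}_{1/N}f|\ll N^{-\delta}$ and summing over $k\leq N\beta$ adds a factor $N^{1-\delta}$. Averaging over characters with the $1/\varphi(q)$ in front removes the $\varphi(q)$. This gives $\log(q)^{3}N^{5/2-\delta+\varepsilon}\log(N)^{3}[\cdots][\cdots]$, since the single bracket is dominated by the product of the two brackets.

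The separately displayed terms $N^{5/2-\delta}\log(qT)^{2}+N^{3-\delta}\log(qNT)^{2}/T$ come from bounding the part $\sum_{u\le k}R_{1,\chi}(u)\mu(k-u)$ of the correction term more carefully. There I use Theorem \ref{thm:Explicit psi} at integer points (with $|\log|u-1||$ handled by $u\neq1$ up to an $O(1)$ contribution), while the $\mathcal{M}_{1,\chi}$ part is handled via $M$ and (\ref{eq:uniform bounds}). The main obstacle is this bookkeeping near $u=1$ and $k$ small, together with checking that the discrete extra sum does not lose more than $N^{1/2}$. I would first try the trivial GRH bounds (\ref{eq:uniform bounds}), falling back on the explicit formula only if needed to match the stated $T$-dependence.
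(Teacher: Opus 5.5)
Your proposal matches the paper's proof. Corollary \ref{cor:discrete} splits each $k$-term into two pieces, both weighted by $|\Delta^{2}_{1/N}(f,k/N)|\ll_f N^{-\delta}$: the continuous convolution, which is handled by the decomposition of Theorem \ref{thm:Main1} with $w=k/N$ (Beta-integral main term plus $I_1,I_2,I_3$), and the discrete correction $\sum_{u}\psi(u,\chi)\mu(k-u)$. The only difference is in that correction term: the paper inserts Theorem \ref{thm:Explicit psi}, which is where the two separately displayed terms $N^{5/2-\delta}\log(qT)^2$ and $N^{3-\delta}\log(qNT)^2/T$ come from, whereas your pointwise GRH bound (\ref{eq:uniform bounds}) gives $N^{5/2-\delta}\log N\log(qN)$. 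That is already dominated by the last displayed error term because $q<N$ and $\log q\gg 1$, so your fallback to the explicit formula is unnecessary.
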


\begin{proof}
We fix $\delta\in\left[1,2\right)$ and $f\in\mathcal{C}^{\delta}\left(\mathbb{R}\right)$,
then we have
\[
\Delta_{1/N}^{2}\left(f,\frac{k}{N}\right)\ll_{f}N^{-\delta}
\]
by Theorem \ref{thm:finitediff bound}. From this fact and from Corollary
\ref{cor:discrete}, we have
\[
\frac{1}{\varphi\left(q\right)}\sum_{\chi\neq\chi_{0}}\overline{\chi}\left(N\right)\sum_{n\leq N\beta}\sum_{m\leq N\beta-n}\Lambda\left(n\right)\chi\left(n\right)\mu\left(m\right)f\left(\frac{n+m}{N}\right)
\]
\[
=\frac{1}{\varphi\left(q\right)}\sum_{k\leq N\beta-1}\Delta_{1/N}^{2}\left(f,\frac{k}{N}\right)\sum_{\chi\neq\chi_{0}}\overline{\chi}\left(N\right)\int_{0}^{k}\psi\left(s,\chi\right)M\left(k-s\right)ds
\]
\[
+O_{f,\beta}\left(\frac{N^{-\delta}}{\varphi(q)}\sum_{k\leq N\beta-1}\sum_{\chi\neq\chi_{0}}\left|\sum_{u=0}^{k}\psi(u,\chi)\mu(k-u)\right|\right).
\]
Using the explicit formulas and the calculations of Theorem \ref{thm:Main1}
and the same notation, we get 
\[
\int_{0}^{k}M\left(s\right)\sum_{\chi\neq\chi_{0}}\overline{\chi}\left(N\right)\psi\left(k-s,\chi\right)ds=-\sum_{\rho:\left|\gamma\right|\leq U}\frac{\Gamma\left(\rho\right)}{\zeta^{\prime}(\rho)}\sum_{\chi\neq\chi_{0}}\overline{\chi}\left(N\right)\sum_{\rho_{\chi}:\left|\gamma_{\chi}\right|\leq T}\frac{\Gamma\left(\rho_{\chi}\right)k^{\rho+\rho_{\chi}+1}}{\Gamma\left(\rho_{\chi}+\rho+2\right)}
\]
\[
+O_{\varepsilon}\left(I_{1}\left(k/N,N,q,T\right)+I_{2}\left(k/N,N,q,U\right)+I_{3}\left(k/N,N,q,T,U\right)\right).
\]
So we obtain
\[
\frac{1}{\varphi\left(q\right)}\sum_{\chi\neq\chi_{0}}\overline{\chi}\left(N\right)\sum_{n\leq N\beta}\sum_{m\leq N\beta-n}\Lambda\left(n\right)\chi\left(n\right)\mu\left(m\right)f\left(\frac{n+m}{N}\right)
\]
\[
=-\frac{1}{\varphi\left(q\right)}\sum_{\rho:\left|\gamma\right|\leq U}\frac{\Gamma\left(\rho\right)}{\zeta^{\prime}(\rho)}\sum_{\chi\neq\chi_{0}}\overline{\chi}\left(N\right)\sum_{\rho_{\chi}:\left|\gamma_{\chi}\right|\leq T}\frac{\Gamma\left(\rho_{\chi}\right)}{\Gamma\left(\rho_{\chi}+\rho+2\right)}\sum_{k\leq N\beta-1}\Delta_{1/N}^{2}\left(f,\frac{k}{N}\right)k^{\rho+\rho_{\chi}+1}
\]
\[
+O_{f,\varepsilon}\left(\frac{N^{-\delta}}{\varphi\left(q\right)}\sum_{k\leq N\beta-1}\left[I_{1}\left(k/N,N,q,T\right)+I_{2}\left(k/N,N,q,U\right)+I_{3}\left(k/N,N,q,T,U\right)\right]\right)
\]
\[
+O_{f,\beta}\left(\frac{N^{-\delta}}{\varphi(q)}\sum_{k\leq N\beta-1}\sum_{\chi\neq\chi_{0}}\left|\sum_{u=0}^{k}\psi\left(u,\chi\right)\mu\left(k-u\right)\right|\right).
\]

We begin by estimating $I_{1},I_{2}$. Since we proved in Theorem
\ref{thm:Main1} that
\[
I_{1},I_{2}\ll\varphi\left(q\right)N^{3/2+\varepsilon}\log\left(q\right)^{3}\log\left(N\right)^{3}\left(1+w^{3/2+\varepsilon}\right)\left(1+\left|\log\left(w\right)\right|^{3}\right)
\]
\[
\times\left[1+\frac{N\left(1+w\right)\log\left(qT\right)^{2}}{T}+\frac{N^{1-\varepsilon}\left(w^{1-\varepsilon}+1\right)}{U^{1-\varepsilon}}\right]
\]
but, in this case, the convolution is from $0$ to $k$ instead of
$0$ to $Nw$, hence we have to consider
\[
\log\left(q\right)^{3}\varphi\left(q\right)N^{-\delta}\sum_{k\leq N\beta-1}k^{3/2+\varepsilon}\log\left(k\right)^{3}\left[1+\frac{k\log\left(qT\right)^{2}}{T}+\frac{k^{1-\varepsilon}}{U^{1-\varepsilon}}\right]
\]
\[
\ll_{\beta}\log\left(q\right)^{3}\varphi\left(q\right)N^{5/2-\delta+\varepsilon}\log\left(N\right)^{3}\left[1+\frac{N\log\left(qT\right)^{2}}{T}+\frac{N^{1-\varepsilon}}{U^{1-\varepsilon}}\right].
\]
In the same spirit, taking $I_{3}$, we have to consider
\[
\log(q)\varphi\left(q\right)N^{-\delta}\sum_{k\leq N\beta-1}k^{1+\varepsilon}\log\left(k\right)^{3}\left[1+\frac{k\log\left(qT\right)^{2}}{T}\right]\left[1+\frac{k^{1-\varepsilon}}{U^{1-\varepsilon}}\right]
\]
\[
\ll_{\beta}\varphi\left(q\right)\log(q)N^{2+\varepsilon-\delta}\log\left(N\right)^{3}\left[1+\frac{N\log\left(qT\right)^{2}}{T}\right]\left[1+\frac{N^{1-\varepsilon}}{U^{1-\varepsilon}}\right].
\]

Now, we consider the second error term. We have
\[
\frac{N^{-\delta}}{\varphi(q)}\sum_{k\leq N\beta-1}\sum_{\chi\neq\chi_{0}}\left|\sum_{u=0}^{k}\psi\left(u,\chi\right)\mu\left(k-u\right)\right|.
\]
Inserting the explicit formula of $\psi\left(u,\chi\right)$ proved
in Theorem \ref{thm:Explicit psi}, we get, recalling that $q<N$,
\[
\frac{N^{-\delta}}{\varphi(q)}\sum_{k\leq N\beta-1}\sum_{\chi\neq\chi_{0}}\left|\sum_{\rho_{\chi}:\left|\gamma_{\chi}\right|\leq T}\frac{1}{\rho_{\chi}}\sum_{u=1}^{k-1}u^{\rho_{\chi}}\mu(k-u)\right|
\]
\[
+O_{\beta,\theta}\left(N^{2-\delta}\log\left(N\right)+N^{-\delta}\sum_{k\leq N\beta-1}\sum_{u=1}^{k-1}\frac{u\log\left(quT\right)^{2}}{T}\right).
\]
Clearly
\[
N^{-\delta}\sum_{k\leq N\beta-1}\sum_{u=1}^{k-1}\frac{u\log\left(quT\right)^{2}}{T}\ll\frac{N^{3-\delta}\log\left(qNT\right)^{2}}{T}
\]
anf from the main term we get
\[
\frac{N^{-\delta}}{\varphi(q)}\sum_{k\leq N\beta-1}\sum_{\chi\neq\chi_{0}}\left|\sum_{\rho_{\chi}:\left|\gamma_{\chi}\right|\leq T}\frac{1}{\rho_{\chi}}\sum_{u=1}^{k-1}u^{\rho_{\chi}}\mu(k-u)\right|
\]
\[
\ll\frac{N^{-\delta}}{\varphi(q)}\sum_{k\leq N\beta-1}k^{3/2}\sum_{\chi\neq\chi_{0}}\sum_{\rho_{\chi}:\left|\gamma_{\chi}\right|\leq T}\frac{1}{\left|\rho_{\chi}\right|}\ll N^{5/2-\delta}\log\left(qT\right)^{2}
\]
from the classical results
\[
\sum_{\rho_{\chi}:\left|\gamma_{\chi}\right|\leq T}\frac{1}{\left|\rho_{\chi}\right|}\ll\log\left(qT\right)^{2}
\]
which can be proved using (\ref{eq:N(T,=00005Cchi)}). The claim follows
by combining the previous estimates.
\end{proof}
Now we are able to prove the main theorem of this subsection.
\begin{thm}
\label{thm:dEhtwist discrete}Under the hypotheses of the previous
theorem and taking $\theta=\delta-1-2\varepsilon$ with a fixed $0<\varepsilon<\left(\delta-1\right)/2$
if $\delta\in\left[\frac{3}{2},2\right)$ and for $\theta=\frac{1}{2}-2\varepsilon$
with a fixed $0<\varepsilon<1/4$ if $\delta\in\left[1,3/2\right)$,
we get
\[
\sum_{1<q\leq N^{\theta}}\frac{1}{\varphi\left(q\right)}\left|\sum_{\chi\neq\chi_{0}}\overline{\chi}\left(N\right)\sum_{n\leq N\beta}\sum_{m\leq N\beta-n}\Lambda\left(n\right)\chi\left(n\right)\mu\left(m\right)f\left(\frac{n+m}{N}\right)\right|
\]
\[
\ll_{f,\beta,\varepsilon,\delta}N^{2-\varepsilon}.
\]
\end{thm}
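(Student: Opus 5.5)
Our approach mirrors the proof of Theorem \ref{thm:dEHtwis bound}, starting from the explicit formula of Theorem \ref{thm:explicit discrete}. Multiply that formula by $\varphi(q)$, take absolute values, and sum over $1<q\leq N^{\theta}$. There are two pieces to control: the double zero-sum main term and the $O$-terms.

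\textbf{Main term.} Use $\Delta_{1/N}^{2}(f,k/N)\ll_{f}N^{-\delta}$ (Theorem \ref{thm:finitediff bound}) and $|k^{\rho+\rho_{\chi}+1}|=k^{2}$ under GRH and RH. Then
\[
\sum_{k\leq N\beta-1}\left|\Delta_{1/N}^{2}\left(f,\frac{k}{N}\right)k^{\rho+\rho_{\chi}+1}\right|\ll_{f,\beta}N^{3-\delta}.
\]
The remaining double series
\[
\sum_{\rho}\frac{1}{|\zeta^{\prime}(\rho)|}\sum_{\rho_{\chi}}\left|\frac{\Gamma(\rho)\Gamma(\rho_{\chi})}{\Gamma(\rho+\rho_{\chi}+2)}\right|
\]
converges by Theorem \ref{thm:dobule series =00005Cchi}, which lets us send $T,U\to+\infty$. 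The dyadic argument in the proof of Theorem \ref{thm:dEHtwis bound}, combining Proposition \ref{prop:gamma est} and (\ref{eq:N(T,=00005Cchi)}), bounds it by $\ll\log q$ uniformly in $\chi$. Summing over the $\varphi(q)$ characters and dividing by $\varphi(q)$ makes the main term contribute $\ll N^{3-\delta}\log q$ for each $q$. Summed over $q\leq N^{\theta}$, this gives $\ll N^{3-\delta+\theta}\log N$.

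\textbf{Error terms.} Letting $T,U\to+\infty$ in Theorem \ref{thm:explicit discrete} leaves $N^{5/2-\delta}$ and $\log(q)^{3}N^{5/2-\delta+\varepsilon}\log(N)^{3}$. Here I must track the factor $\varphi(q)$ carried by $I_{1},I_{2},I_{3}$. In the stated formula it seems to have been divided out after the $1/\varphi(q)$ normalisation. To be safe I will recheck that the per-$q$ error is $\ll N^{5/2-\delta+\varepsilon}\log^{3}q\log^{3}N$ once the $|\cdot|$ is applied to the whole character sum. Summing over $q\leq N^{\theta}$ then gives $\ll N^{5/2-\delta+\varepsilon+\theta}\log^{6}N$.

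\textbf{Choice of $\theta$.} Two conditions are needed:
\[
3-\delta+\theta<2-\varepsilon\qquad\text{and}\qquad \tfrac{5}{2}-\delta+\varepsilon+\theta<2-\varepsilon,
\]
up to logarithms, which are absorbed by shrinking $\varepsilon$ slightly. The first is $\theta<\delta-1-\varepsilon$ and the second is $\theta<\delta-\tfrac{1}{2}-2\varepsilon$.
\begin{itemize}
\item For $\delta\geq 3/2$, the choice $\theta=\delta-1-2\varepsilon$ satisfies both.
\item For $\delta\in[1,3/2)$, the main-term condition becomes restrictive. I expect one uses the trivial size of the main term instead: for $\delta\ge1$ it is also bounded by $N^{2}$ times $\log$ factors, so $\theta=\tfrac12-2\varepsilon$ should follow from comparing with the $\varepsilon$-loss.
\end{itemize}
This last case is the step I expect to be the real obstacle. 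With the crude bound $N^{3-\delta+\theta}$ and $\delta=1$, we get $N^{2+\theta}$, which is too large. So for $\delta<3/2$ one needs either a sharper treatment of $\sum_{k}\Delta^{2}_{1/N}(f,k/N)k^{\rho+\rho_{\chi}+1}$, for instance summation by parts exploiting the oscillation of $k^{i(\gamma+\gamma_{\chi})}$, or the observation that the constraint coming from the error terms dominates. I would first try summation by parts in $k$, gaining a factor $|\gamma+\gamma_{\chi}|^{-1}$ against the first differences of $f$. Those differences are $\ll N^{-1}\log N$ by (\ref{eq:LogLip}), which should yield the threshold $\theta<\tfrac12-2\varepsilon$.
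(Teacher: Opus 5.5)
Your treatment of $\delta\in[3/2,2)$ follows the paper, and your reading of the $\varphi(q)$ bookkeeping is right: the $\varphi(q)$ carried by $I_1,I_2,I_3$ cancels against the $1/\varphi(q)$ normalisation. There is one slip. You cannot let $T\to+\infty$ in the error term of Theorem \ref{thm:explicit discrete}, because the term $N^{5/2-\delta}\log(qT)^{2}$ diverges; it comes from the crude bound $\sum_{|\gamma_\chi|\le T}|\rho_\chi|^{-1}\ll\log(qT)^{2}$. The paper instead fixes $U\asymp T\asymp N$ and bounds the truncated zero sum in the main term by the complete, absolutely convergent one, which is $\ll\log q$. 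This gives a per-modulus error $\ll N^{5/2-\delta+\varepsilon}\log^{5}q\,\log^{5}N$, after which your exponent comparison goes through.

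The genuine gap is the case $\delta\in[1,3/2)$. You leave it open, and the mechanism you sketch runs the wrong way. The ``trivial size $N^{2}$'' remark cannot help, since $N^{2}\cdot N^{\theta}$ already exceeds $N^{2-\varepsilon}$.

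Summation by parts is what the paper uses, but consider what it does. Writing $\Delta^{2}_{1/N}(f,k/N)=\Delta_{1/N}(f,(k+1)/N)-\Delta_{1/N}(f,k/N)$ and summing by parts puts the difference on $k^{s}$, with $s=\rho+\rho_\chi+1$. Since $k^{s}-(k-1)^{s}=s\int_{0}^{1}(k-1+t)^{s-1}dt$, this \emph{costs} a factor $|s|$; it does not gain $|\gamma+\gamma_\chi|^{-1}$. Summing by parts the other way, to exploit the oscillation of $k^{i(\gamma+\gamma_\chi)}$, does not help either. It puts a third difference on $f$, which is still only $\ll N^{-\delta}$ by Theorem \ref{thm:finitediff bound}, against partial sums of $k^{s}$ of size $N^{3}/|s|$, and that is worse in $N$.

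With $\Delta_{1/N}(f,x)\ll N^{-1}\log N$ from (\ref{eq:LogLip}), the $k$-sum is $\ll N\log N\,(1+|s|)$. So what must be bounded is
\[
\sum_{|\gamma|\le U}\frac{1}{|\zeta^{\prime}(\rho)|}\sum_{|\gamma_\chi|\le T}\left|\frac{\Gamma(\rho)\Gamma(\rho_\chi)}{\Gamma(\rho+\rho_\chi+1)}\right|,
\]
and this is not absolutely convergent. It is the case $k=0$ of Theorem \ref{thm:dobule series =00005Cchi}, which requires $k>1/2$, and for $\gamma\gamma_\chi>0$ Proposition \ref{prop:gamma est} gives only decay $|\gamma+\gamma_\chi|^{-3/2}$.

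The missing step is an explicit bound for this truncated sum, which the paper obtains as follows.
\begin{itemize}
\item The ranges $|\gamma+\gamma_\chi|<1$, $\gamma\gamma_\chi<0$ and $\gamma_\chi=0$ contribute $\ll\log q$. This uses exponential decay and the local count $N(t+1,\chi)-N(t,\chi)\ll\log(q(|t|+2))$.
\item For $\gamma\gamma_\chi>0$, the inequality $(\gamma+\gamma_\chi)^{-3/2}\le\gamma^{-3/4}\gamma_\chi^{-3/4}$, together with (\ref{eq:N(T,=00005Cchi)}) and Conjecture \ref{conj:simple zeros}, gives $\ll(TU)^{1/4}$ up to logarithms.
\end{itemize}
When $\delta=1$, the error factors $N\log(qT)^{2}/T$ and $N^{1-\varepsilon}/U^{1-\varepsilon}$ force $T\asymp U\asymp N$, so this is a loss of $N^{1/2}$. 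The main term is then $\ll N^{3/2}$ per modulus up to logarithms, and \emph{this} loss is what produces $\theta=\tfrac12-2\varepsilon$. For $\delta>1$ the error terms alone would allow $\theta$ up to about $\delta-\tfrac12-2\varepsilon$.

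Finally, at $\delta=1$ your own error-term condition becomes an equality for $\theta=\tfrac12-2\varepsilon$. To land strictly below $N^{2-\varepsilon}$, run the explicit formula for $M$ with a smaller, independent $\varepsilon'$.
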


\begin{proof}
We begin by considering the case $\delta\in\left[\frac{3}{2},2\right)$.
By the previous explicit formula, the main term and the regularity
of the weight,
\[
-\frac{1}{\varphi\left(q\right)}\sum_{\rho:\left|\gamma\right|\leq U}\frac{\Gamma\left(\rho\right)}{\zeta^{\prime}(\rho)}\sum_{\chi\neq\chi_{0}}\overline{\chi}\left(N\right)\sum_{\rho_{\chi}:\left|\gamma_{\chi}\right|\leq T}\frac{\Gamma\left(\rho_{\chi}\right)}{\Gamma\left(\rho_{\chi}+\rho+2\right)}\sum_{k\leq N\beta-1}\Delta_{1/N}^{2}\left(f,\frac{k}{N}\right)k^{\rho+\rho_{\chi}+1}
\]
\[
\ll_{f,\beta}\frac{N^{3-\delta}}{\varphi(q)}\sum_{\rho:\left|\gamma\right|\leq U}\left|\frac{\Gamma\left(\rho\right)}{\zeta^{\prime}(\rho)}\right|\sum_{\chi\neq\chi_{0}}\sum_{\rho_{\chi}:\left|\gamma_{\chi}\right|\leq T}\left|\frac{\Gamma\left(\rho_{\chi}\right)}{\Gamma\left(\rho_{\chi}+\rho+2\right)}\right|.
\]
Now, arguing as in Theorem \ref{thm:dEHtwis bound} we have that the
double series converges absolutely and it is $\ll\log\left(q\right)$,
so we have
\[
-\frac{1}{\varphi\left(q\right)}\sum_{\rho:\left|\gamma\right|\leq U}\frac{\Gamma\left(\rho\right)}{\zeta^{\prime}(\rho)}\sum_{\chi\neq\chi_{0}}\overline{\chi}\left(N\right)\sum_{\rho_{\chi}:\left|\gamma_{\chi}\right|\leq T}\frac{\Gamma\left(\rho_{\chi}\right)}{\Gamma\left(\rho_{\chi}+\rho+2\right)}\sum_{k\leq N\beta-1}\Delta_{1/N}^{2}\left(f,\frac{k}{N}\right)k^{\rho+\rho_{\chi}+1}
\]
\[
\ll_{f,\beta}N^{3-\delta}\log\left(q\right)\ll_{f,\beta,\delta}N^{3-\delta}\log\left(N\right)
\]

Now we consider the error term. Taking $U<T,\,U\asymp T\asymp N,$we
obtain

\[
E_{\max}^{*}:=N^{5/2-\delta}\log\left(qT\right)^{2}+\frac{N^{3-\delta}\log\left(qNT\right)^{2}}{T}
\]
\[
+\log\left(q\right)^{3}N^{5/2-\delta+\varepsilon}\log\left(N\right)^{3}\left[1+\frac{N\log\left(qT\right)^{2}}{T}\right]\left[1+\frac{N^{1-\varepsilon}}{U^{1-\varepsilon}}\right]
\]
\[
\ll_{\varepsilon}\log\left(q\right)^{5}N^{5/2-\delta+\varepsilon}\log\left(N\right)^{5}.
\]
 Combining all the pieces and taking $\theta=\delta-1-2\varepsilon$,
we get
\[
\sum_{1<q\leq N^{\delta-1-2\varepsilon}}\frac{1}{\varphi\left(q\right)}\left|\sum_{\chi\neq\chi_{0}}\overline{\chi}\left(N\right)\sum_{n\leq N\beta}\sum_{m\leq N\beta-n}\Lambda\left(n\right)\chi\left(n\right)\mu\left(m\right)f\left(\frac{n+m}{N}\right)\right|
\]
\[
\ll_{f,\beta,\varepsilon}N^{2-2\varepsilon}\log\left(N\right)+N^{5/2-\delta+\varepsilon}\log\left(N\right)^{5}\sum_{q\leq N^{\delta-1-2\varepsilon}}\log\left(q\right)^{5}\ll_{f,\beta,\varepsilon,\delta}N^{2-\varepsilon}.
\]

Now, assume that $\delta\in\left[1,\frac{3}{2}\right)$. As regards
the error term, taking again $U<T,\,U\asymp T\asymp N,$we obtain
the same bound
\[
E_{\max}^{*}\ll\log\left(q\right)^{5}N^{5/2-\delta+\varepsilon}\log\left(N\right)^{5}.
\]

So now we focus on the main term, which can be treated in a different
way with respect to the previous case, since a direct use of the absolute
convergence of the series leads to a too large error in $N$. Using
summation by parts and recalling the property (\ref{eq:prod fow diff}),
which means in this case
\[
\Delta_{1/N}^{2}\left(f,\frac{k}{N}\right)=\Delta_{1/N}\left(f,\frac{k+1}{N}\right)-\Delta_{1/N}\left(f,\frac{k}{N}\right)
\]

we get
\[
\sum_{k\leq N\beta-1}\Delta_{1/N}^{2}\left(f,\frac{k}{N}\right)k^{\rho+\rho_{\chi}+1}=\Delta_{1/N}\left(f,\beta\right)\left(N\beta-1\right)^{\rho+\rho_{\chi}+1}
\]
\[
-\sum_{k\leq N\beta-1}\Delta_{1/N}\left(f,\frac{k}{N}\right)\left(k^{\rho+\rho_{\chi}+1}-\left(k-1\right)^{\rho+\rho_{\chi}+1}\right).
\]
We recall that if $f\in\mathcal{C}^{1}\left(\mathbb{R}\right)$, then
$\Delta_{1/N}\left(f,x\right)\ll N^{-1}\log\left(N\right)$, by (\ref{eq:LogLip}),
so clearly this bound can be used for every fixed $\delta\in\left[1,\frac{3}{2}\right)$.
Hence, we have
\[
\Delta_{1/N}\left(f,\beta\right)\left(N\beta-1\right)^{\rho+\rho_{\chi}+1}\ll_{f,\beta}N\log\left(N\right)
\]
so it remains to bound the sum. Note that
\[
\sum_{k\leq N\beta-1}\Delta_{1/N}\left(f,\frac{k}{N}\right)\left(k^{\rho+\rho_{\chi}+1}-\left(k-1\right)^{\rho+\rho_{\chi}+1}\right)
\]
\[
=\sum_{k\leq N\beta-1}\Delta_{1/N}\left(f,\frac{k}{N}\right)\left(\rho+\rho_{\chi}+1\right)\int_{k-1}^{k}v^{\rho+\rho_{\chi}}dv
\]
\[
=\sum_{k\leq N\beta-1}\Delta_{1/N}\left(f,\frac{k}{N}\right)\left(\rho+\rho_{\chi}+1\right)\int_{0}^{1}\left(k-1+t\right)^{\rho+\rho_{\chi}}dt
\]
\[
\ll_{f,\beta}\frac{\left|\rho+\rho_{\chi}+1\right|}{N}\log\left(N\right)\sum_{1<k\leq N\beta-1}k\ll_{f,\beta}N\log\left(N\right)\left|\rho+\rho_{\chi}+1\right|
\]

so we have to estimate
\[
\frac{N\log\left(N\right)}{\varphi\left(q\right)}\sum_{\rho:\left|\gamma\right|\leq U}\frac{1}{\left|\zeta^{\prime}(\rho)\right|}\sum_{\chi\neq\chi_{0}}\sum_{\rho_{\chi}:\left|\gamma_{\chi}\right|\leq T}\left|\frac{\Gamma\left(\rho\right)\Gamma\left(\rho_{\chi}\right)}{\Gamma\left(\rho_{\chi}+\rho+1\right)}\right|
\]
and in this case we cannot use the absolute convergence of the series.
We start considering $\left|\gamma+\gamma_{\chi}\right|<1$. Then,
recalling that
\[
N_{\chi}\left(t+1,\chi\right)-N_{\chi}\left(t,\chi\right)\ll\log\left(q\left(\left|t\right|+2\right)\right)
\]
which is, essentially, a consequence of (\ref{eq:N(T,=00005Cchi)})
(see \cite{MV2006}, Theorem $10.17$), we have, from Proposition
\ref{prop:gamma est}, that
\[
\sum_{\rho:\left|\gamma\right|\leq U}\frac{1}{\left|\zeta^{\prime}(\rho)\right|}\sum_{\rho_{\chi}:\left|\gamma_{\chi}\right|\leq T,\,\left|\gamma+\gamma_{\chi}\right|<1}\left|\frac{\Gamma\left(\rho\right)\Gamma\left(\rho_{\chi}\right)}{\Gamma\left(\rho_{\chi}+\rho+1\right)}\right|
\]
\[
\ll\sum_{\rho:\left|\gamma\right|\leq U}\frac{e^{-\pi\left|\gamma\right|/2}}{\left|\zeta^{\prime}(\rho)\right|}\sum_{\rho_{\chi}:\left|\gamma_{\chi}\right|\leq T,\,\left|\gamma+\gamma_{\chi}\right|<1}e^{-\pi\left|\gamma_{\chi}\right|/2}
\]
\[
\ll\sum_{\rho:\left|\gamma\right|\leq U}\frac{e^{-\pi\left|\gamma\right|/2}}{\left|\zeta^{\prime}(\rho)\right|}\#\left\{ \rho_{\chi}:\gamma_{\chi}\in\left(-\gamma-1,-\gamma+1\right)\right\} 
\]
\[
\ll\log\left(q\right)\sum_{\rho:\left|\gamma\right|\leq U}\frac{e^{-\pi\left|\gamma\right|/2}}{\left|\zeta^{\prime}(\rho)\right|}\log\left(\left|\gamma\right|\right)\ll\log(q).
\]
Now, assume $\left|\gamma+\gamma_{\chi}\right|\geq1$. Then, again
from Proposition \ref{prop:gamma est},
\[
\sum_{\rho:\left|\gamma\right|\leq U}\frac{1}{\left|\zeta^{\prime}(\rho)\right|}\sum_{\rho_{\chi}:\left|\gamma_{\chi}\right|\leq T,\,\left|\gamma+\gamma_{\chi}\right|\geq1}\left|\frac{\Gamma\left(\rho\right)\Gamma\left(\rho_{\chi}\right)}{\Gamma\left(\rho_{\chi}+\rho+1\right)}\right|
\]
\[
\ll\sum_{\rho:\left|\gamma\right|\leq U}\frac{1}{\left|\zeta^{\prime}(\rho)\right|}\sum_{\rho_{\chi}:\left|\gamma_{\chi}\right|\leq T,\,\left|\gamma+\gamma_{\chi}\right|\geq1}\frac{e^{-\pi\left(\left|\gamma\right|+\left|\gamma_{\chi}\right|-\left|\gamma+\gamma_{\chi}\right|\right)/2}}{\left|\gamma+\gamma_{\chi}\right|^{3/2}}.
\]
If $\gamma\gamma_{\chi}>0$, then it is enough to consider
\[
\sum_{\rho:0<\gamma\leq U}\frac{1}{\left|\zeta^{\prime}(\rho)\right|}\sum_{\rho_{\chi}:0<\gamma_{\chi}\leq T,\,\gamma+\gamma_{\chi}\geq1}\frac{1}{\left(\gamma+\gamma_{\chi}\right)^{3/2}}
\]
\[
\ll\sum_{\rho:0<\gamma\leq U}\frac{1}{\left|\zeta^{\prime}(\rho)\right|\gamma^{3/4}}\sum_{\rho_{\chi}:0<\gamma_{\chi}\leq T,\,\gamma+\gamma_{\chi}\geq1}\frac{1}{\gamma_{\chi}^{3/4}}
\]
\[
\ll\left(TU\right)^{1/4}\log\left(qT\right)^{2}\log\left(U\right)^{1/2}
\]
By (\ref{eq:N(T)}) and Conjecture \ref{conj:simple zeros}. Assume
now $\gamma\gamma_{\chi}<0$ and let $\gamma>0,\gamma_{\chi}<0$.
We have
\[
\sum_{\rho:0<\gamma\leq U}\frac{1}{\left|\zeta^{\prime}(\rho)\right|}\sum_{\rho_{\chi}:0<\gamma_{\chi}\leq T,\,\gamma-\gamma_{\chi}\geq1}\frac{e^{-\pi\gamma_{\chi}}}{\left(\gamma-\gamma_{\chi}\right)^{3/2}}
\]
\[
+\sum_{\rho:0<\gamma\leq U}\frac{e^{-\pi\gamma}}{\left|\zeta^{\prime}(\rho)\right|}\sum_{\rho_{\chi}:0<\gamma_{\chi}\leq T,\,\gamma-\gamma_{\chi}\leq-1}\frac{1}{\left(\gamma_{\chi}-\gamma\right)^{3/2}}
\]
then note that 
\[
\,\gamma-\gamma_{\chi}\geq1\Rightarrow\gamma_{\chi}\leq\gamma-1\leq U-1<T
\]
hence we can consider
\[
\sum_{\rho_{\chi}:0<\gamma_{\chi}\leq T,\,\gamma-\gamma_{\chi}\leq-1}\frac{e^{-\pi\gamma_{\chi}}}{\left(\gamma-\gamma_{\chi}\right)^{3/2}}=\sum_{\rho_{\chi}:0<\gamma_{\chi}\leq\gamma-1}\frac{e^{-\pi\gamma_{\chi}}}{\left(\gamma-\gamma_{\chi}\right)^{3/2}}
\]
now if $\gamma_{\chi}<\gamma/2$ then
\[
\sum_{\rho_{\chi}:0<\gamma_{\chi}\leq\gamma/2}\frac{e^{-\pi\gamma_{\chi}}}{\left(\gamma-\gamma_{\chi}\right)^{3/2}}\ll\frac{\log\left(q\right)}{\gamma^{3/2}}
\]
and if $\gamma/2\leq\gamma_{\chi}<\gamma-1$ then
\[
\sum_{\rho_{\chi}:\gamma/2<\gamma_{\chi}\leq\gamma-1}\frac{e^{-\pi\gamma_{\chi}}}{\left(\gamma-\gamma_{\chi}\right)^{3/2}}\ll e^{-\pi\gamma/2}\gamma\log\left(q\gamma\right)
\]
then
\[
\sum_{\rho:0<\gamma\leq U}\frac{1}{\left|\zeta^{\prime}(\rho)\right|}\sum_{\rho_{\chi}:0<\gamma_{\chi}\leq T,\,\gamma-\gamma_{\chi}\geq1}\frac{e^{-\pi\gamma_{\chi}}}{\left(\gamma-\gamma_{\chi}\right)^{3/2}}
\]
\[
\ll\log\left(q\right)\left[\sum_{\rho:0<\gamma\leq U}\frac{1}{\left|\zeta^{\prime}(\rho)\right|\gamma^{3/2}}+\sum_{\rho:0<\gamma\leq U}\frac{\gamma\log\left(\gamma\right)}{\left|\zeta^{\prime}(\rho)\right|}\right]\ll\log\left(q\right)
\]
due to the absolute convergence of the series. For the second sum
we clearly have
\[
\sum_{\rho_{\chi}:0<\gamma_{\chi}\leq T,\,\gamma-\gamma_{\chi}\leq-1}\frac{1}{\left(\gamma_{\chi}-\gamma\right)^{3/2}}=\sum_{\rho_{\chi}:\gamma+1\leq\gamma_{\chi}\leq T}\frac{1}{\left(\gamma_{\chi}-\gamma\right)^{3/2}}
\]
\[
\ll\sum_{m\in\left[1,T-\gamma\right]\cap\mathbb{N}}\sum_{\gamma_{\chi}\in\left[\gamma+m,\gamma+m+1\right)}\frac{1}{\left(\gamma_{\chi}-\gamma\right)^{3/2}}
\]
\[
\ll\sum_{m\geq1}\frac{\log\left(q\left(\gamma+m+2\right)\right)}{m^{3/2}}\ll\log\left(q\right)+\log\left(\gamma\right)
\]
hence
\[
\sum_{\rho:0<\gamma\leq U}\frac{e^{-\pi\gamma}}{\left|\zeta^{\prime}(\rho)\right|}\sum_{\rho_{\chi}:0<\gamma_{\chi}\leq T,\,\gamma-\gamma_{\chi}\leq-1}\frac{1}{\left(\gamma_{\chi}-\gamma\right)^{3/2}}\ll\log\left(q\right).
\]
It remains to control what happens if $\gamma_{\chi}=0$, which is
a case that we cannot exclude. We can argue as in Theorem \ref{thm:dobule series =00005Cchi},
and observe, by Stirling formula (\ref{eq:stirling}), that
\[
\sum_{\rho:\left|\gamma\right|\leq U}\frac{1}{\left|\zeta^{\prime}(\rho)\right|}\sum_{\rho_{\chi}:\gamma_{\chi}=0}\left|\frac{\Gamma\left(\rho\right)\Gamma\left(\rho_{\chi}\right)}{\Gamma\left(\rho_{\chi}+\rho+1\right)}\right|\ll N\left(4,\chi\right)\sum_{\rho:\left|\gamma\right|\leq U}\frac{\left|\Gamma\left(\rho\right)\right|}{\left|\zeta^{\prime}(\rho)\right|\left|\Gamma\left(\rho+\frac{3}{2}\right)\right|}
\]
\[
\ll\log\left(q\right)\sum_{\rho}\frac{1}{\left|\zeta^{\prime}(\rho)\right|\left|\gamma\right|^{3/2}}
\]
which is convergent, by Conjecture \ref{conj:simple zeros}.

Finally, we obtain
\[
\frac{N\log\left(N\right)}{\varphi\left(q\right)}\sum_{\rho:\left|\gamma\right|\leq U}\frac{1}{\left|\zeta^{\prime}(\rho)\right|}\sum_{\chi\neq\chi_{0}}\sum_{\rho_{\chi}:\left|\gamma_{\chi}\right|\leq T}\left|\frac{\Gamma\left(\rho\right)\Gamma\left(\rho_{\chi}\right)}{\Gamma\left(\rho_{\chi}+\rho+1\right)}\right|
\]
\[
\ll N\log\left(N\right)\log\left(q\right)\left(TU\right)^{1/4}\log\left(qT\right)^{2}\log\left(U\right)^{1/2}
\]
and since $U\asymp T\asymp N$ we can conclude
\[
\frac{N\log\left(N\right)}{\varphi\left(q\right)}\sum_{\rho:\left|\gamma\right|\leq U}\frac{1}{\left|\zeta^{\prime}(\rho)\right|}\sum_{\chi\neq\chi_{0}}\sum_{\rho_{\chi}:\left|\gamma_{\chi}\right|\leq T}\left|\frac{\Gamma\left(\rho\right)\Gamma\left(\rho_{\chi}\right)}{\Gamma\left(\rho_{\chi}+\rho+1\right)}\right|
\]
\[
\ll N^{3/2}\log\left(N\right)^{9/4}\log\left(q\right)^{2}.
\]
Again, combining all the pieces, we get, taking $\theta=1/2-2\varepsilon$,
that
\[
\sum_{1<q\leq N^{\theta}}\frac{1}{\varphi\left(q\right)}\left|\sum_{\chi\neq\chi_{0}}\overline{\chi}\left(N\right)\sum_{n\leq N\beta}\sum_{m\leq N\beta-n}\Lambda\left(n\right)\chi\left(n\right)\mu\left(m\right)f\left(\frac{n+m}{N}\right)\right|
\]
\[
\ll_{f,\beta}N^{3/2}\log\left(N\right)^{9/4}\sum_{1<q\leq N^{1/2-2\varepsilon}}\log\left(q\right)^{2}+N^{5/2-\delta}\log\left(N\right)^{5}\sum_{1<q\leq N^{1/2-2\varepsilon}}\log\left(q\right)^{5}
\]
\[
\ll_{f,\beta,\varepsilon}N^{3/2}\log\left(N\right)^{5}\sum_{q\leq N^{1/2-2\varepsilon}}\log\left(q\right)^{5}\ll_{f,\beta,\varepsilon}N^{2-\varepsilon}
\]
and the claim follows.
\end{proof}

\section{weighted averages of diagonal twisted eh problem with $g(n)=\mu(n)\log(n)$ }

In this section we will use the previous results to show that, also
in this case, we are able to show that an analogous weighted average
for the diagonal twisted EH problem with $g(n)=\mu(n)\log(n)$, that
is the average
\[
\sum_{N\alpha<n\leq N\beta}\sum_{m\leq N\beta-n}\Lambda\left(n\right)\chi\left(n\right)\mu\left(m\right)\log\left(m\right)f\left(\frac{n+m}{N}\right)
\]
is consistent with Conjecture \ref{conj:DiagonalEH}. The proofs are
similar, so we omit some repetitive details and we focus on the main
difference between these two cases. We start again considering the
weight in the Sobolev space $W^{2,1}$. 
\begin{thm}
\label{thm:explicit formula M tilde}Assume the hypotheses of Theorem
\ref{thm:Main1}. Then
\[
\frac{1}{\varphi\left(q\right)}\sum_{\chi\neq\chi_{0}}\overline{\chi}\left(N\right)\sum_{n\leq N\beta}\sum_{m\leq N\beta-n}\Lambda\left(n\right)\chi\left(n\right)\mu\left(m\right)\log\left(m\right)f\left(\frac{n+m}{N}\right)
\]
\[
=-\frac{1}{\varphi\left(q\right)}\sum_{\rho:\left|\gamma\right|\leq U}\frac{N^{\rho}\Gamma\left(\rho\right)}{\zeta^{\prime}(\rho)}\sum_{\chi\neq\chi_{0}}\overline{\chi}\left(N\right)\sum_{\rho_{\chi}:\left|\gamma_{\chi}\right|\leq T}\frac{N^{\rho_{\chi}}\Gamma\left(\rho_{\chi}\right)}{\Gamma\left(\rho_{\chi}+\rho+2\right)}\int_{0}^{\beta}f^{\prime\prime}\left(w\right)\mathcal{H}\left(N,w,\rho,\rho_{\chi}\right)w^{\rho+\rho_{\chi}+1}dw
\]
\[
+O_{\varepsilon}\left(\frac{1}{N}\int_{0}^{\beta}\left|f^{\prime\prime}\left(w\right)\right|\left[\mathcal{E}_{1}^{*}\left(N,w,q,U,T\right)+\mathcal{E}_{2}^{*}\left(N,w,q,U,T\right)\right]dw\right)
\]
where
\begin{equation}
\mathcal{H}\left(N,w,\rho,\rho_{\chi}\right):=\log\left(Nw\right)+\psi^{\left(0\right)}\left(\rho+1\right)-\psi^{\left(0\right)}\left(\rho_{\chi}+\rho+2\right)-\frac{1}{\rho}\label{eq:H storto}
\end{equation}
and $\psi^{\left(0\right)}\left(z\right)$ is the Digamma function,

\begin{equation}
\mathcal{E}_{1}^{*}\left(N,w,q,U,T\right):=\varphi\left(q\right)N^{3/2+\varepsilon}\log\left(q\right)^{3}\log\left(N\right)^{3}\left(1+w^{3/2+\varepsilon}\right)\left(1+\left|\log\left(w\right)\right|^{3}\right)\label{eq:E_1 star}
\end{equation}
\[
\times\left[1+\frac{N\left(1+w\right)\log\left(qT\right)^{2}}{T}+\frac{N^{1-\varepsilon}\left(w^{1-\varepsilon}+1\right)}{U^{1-\varepsilon}}\right],
\]
\[
\mathcal{E}_{2}^{*}\left(N,w,q,U,T\right)=\varphi(q)N^{1+\varepsilon}\log(q)\log\left(N\right)^{3}\left(1+\left|\log\left(w\right)\right|^{3}\right)
\]
\begin{equation}
\times\left(1+w^{1+\varepsilon}\right)\left[1+\frac{N\log\left(qT\right)^{2}}{T}\right]\left[1+\frac{N^{1-1\varepsilon}}{U^{1-1\varepsilon}}\right].\label{eq:E_2 star}
\end{equation}
and the implicit constant depends only on $\varepsilon$. 
\end{thm}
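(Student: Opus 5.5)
We follow the proof of Theorem \ref{thm:Main1} step by step, replacing $g_{2}=\mu$ with $g_{2}(m)=\mu(m)\log(m)$, so that $G_{2}=\widetilde{M}$. First we apply Theorem \ref{thm:MAINCGZ} with $\alpha=0$, $\eta=N$, $g_{1}(n)=\Lambda(n)\chi(n)$ and $g_{2}(m)=\mu(m)\log(m)$. Since $G_{2}(0)=0$ there is no boundary term, and after twisting by $\overline{\chi}(N)$ and summing over $\chi\neq\chi_{0}$ the average equals
\[
\frac{1}{N\varphi(q)}\int_{0}^{\beta}f^{\prime\prime}(w)\int_{0}^{Nw}\widetilde{M}(s)\sum_{\chi\neq\chi_{0}}\overline{\chi}(N)\psi(Nw-s,\chi)\,ds\,dw .
\]
Next we insert the explicit formula (\ref{eq:M tilde explicit}) for $\widetilde{M}(s)$ and Theorem \ref{thm:Explicit psi} for $\psi(\cdot,\chi)$. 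These formulas are valid for all $s>0$ with $s\neq1$, which is exactly why they were extended down to small arguments. As before, the convolution then splits into a main term (main $\times$ main) and three remainder integrals $I_{1}^{*},I_{2}^{*},I_{3}^{*}$ as in (\ref{eq:dissect1})--(\ref{eq:dissect3}).

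For the main term we have to compute
\[
\int_{0}^{Nw}s^{\rho}\Bigl(\log s-\frac{1}{\rho}\Bigr)(Nw-s)^{\rho_{\chi}}\,ds .
\]
Substituting $s=Nwt$ gives $(Nw)^{\rho+\rho_{\chi}+1}$ times
\[
\bigl(\log(Nw)-1/\rho\bigr)B(\rho+1,\rho_{\chi}+1)+\int_{0}^{1}t^{\rho}(1-t)^{\rho_{\chi}}\log t\,dt .
\]
The last integral is $\partial_{a}B(a,\rho_{\chi}+1)\big|_{a=\rho+1}=B(\rho+1,\rho_{\chi}+1)\bigl(\psi^{(0)}(\rho+1)-\psi^{(0)}(\rho+\rho_{\chi}+2)\bigr)$. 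Then $B(\rho+1,\rho_{\chi}+1)/(\rho\rho_{\chi})=\Gamma(\rho)\Gamma(\rho_{\chi})/\Gamma(\rho+\rho_{\chi}+2)$, and multiplying by $1/N$ produces exactly the displayed main term with the factor $\mathcal{H}(N,w,\rho,\rho_{\chi})$ from (\ref{eq:H storto}). The sign comes from the minus sign in $\mathcal{M}_{1,\chi}$, as before.

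For the remainders we copy the bounds for $I_{1},I_{2},I_{3}$, changing only what $\widetilde{M}$ alters. Under RH, partial summation (\ref{eq:M tilde}) gives $\widetilde{M}(x)\ll_{\varepsilon}N^{1/2+\varepsilon}$ uniformly for $0<x\leq CN$, after adjusting $\varepsilon$ to absorb the logarithm. The remainder in (\ref{eq:M tilde explicit}) differs from that of (\ref{eq:Explitic M eq}) only by an extra $|\log x|$ inside the $O_{\varepsilon}(1)$ part. When integrated over $[0,Nw]$ with Proposition \ref{prop:trivial}, this costs at most one more power of $\log N$ and of $1+|\log w|$. Those losses are already covered by the factors $\log(N)^{3}(1+|\log w|^{3})$ in $\mathcal{E}_{1}^{*},\mathcal{E}_{2}^{*}$, since the earlier bounds were deliberately not sharp in the logarithms. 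If they are not covered, the fix is a slightly larger log exponent that does not affect the later applications. The Cauchy--Schwarz argument for $I_{3}^{*}$ goes through unchanged with $\int_{0}^{Nw}|R_{2}^{*}|^{2}$.

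The main obstacle is keeping the logarithmic bookkeeping consistent with the stated $\mathcal{E}^{*}$. This matters especially near $w\to0$, where $\log(Nw)$ can be negative and large. There we use that $f^{\prime\prime}$ is bounded near the origin and that $x^{A}|\log x|^{B}\ll1$ for $0<x<1$. We must also justify interchanging the finite zero sums with the $s$- and $w$-integrals. That interchange is harmless because the sums are truncated at heights $T$ and $U$, and for $\mathrm{Re}\,\rho,\mathrm{Re}\,\rho_{\chi}=1/2$ the log-Beta integral converges absolutely.
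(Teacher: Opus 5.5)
Your proposal is correct and follows essentially the same route as the paper. You apply Theorem \ref{thm:MAINCGZ} with $G_{2}=\widetilde{M}$ and obtain the main term from $\int_{0}^{1}t^{\rho}(1-t)^{\rho_{\chi}}\log t\,dt=B(\rho+1,\rho_{\chi}+1)\bigl(\psi^{(0)}(\rho+1)-\psi^{(0)}(\rho+\rho_{\chi}+2)\bigr)$. You then rerun the $I_{1},I_{2},I_{3}$ estimates, and the extra $|\log s|$ from (\ref{eq:M tilde explicit}) is absorbed by the factors $\log(q)^{3}\log(N)^{3}(1+|\log w|^{3})$ already present. The paper keeps $\widetilde{M}(x)\ll\log(N)N^{1/2+\varepsilon}$ explicitly instead of absorbing the logarithm into $\varepsilon$, but this is cosmetic. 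Your fallback of enlarging the log exponent is unnecessary, since those extra losses are indeed covered.
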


\begin{proof}
From Theorem \ref{thm:MAINCGZ} we have
\[
\frac{1}{\varphi\left(q\right)}\sum_{\chi\neq\chi_{0}}\overline{\chi}\left(N\right)\sum_{n\leq N\beta}\sum_{m\leq N\beta-n}\Lambda\left(n\right)\chi\left(n\right)\mu\left(m\right)\log\left(m\right)f\left(\frac{n+m}{N}\right)
\]
\begin{equation}
=\frac{1}{N\varphi\left(q\right)}\sum_{\chi\neq\chi_{0}}\overline{\chi}\left(N\right)\int_{0}^{\beta}f^{\prime\prime}\left(w\right)\int_{0}^{Nw}\widetilde{M}\left(s\right)\psi\left(Nw-s,\chi\right)ds.\label{eq:main applied to M tilde}
\end{equation}
We made a dissection like (\ref{eq:dissect1}), (\ref{eq:dissect2})
and (\ref{eq:dissect3}). Recalling the definition of Digamma function
\[
\psi^{\left(0\right)}\left(z\right):=\frac{\Gamma^{\prime}\left(z\right)}{\Gamma\left(z\right)},z\neq0,-1,-2,\dots
\]
(see \cite{O2010}, $5.2.2$) and the representation
\[
\psi^{\left(0\right)}\left(a\right)-\psi^{\left(0\right)}\left(a+b\right)=\frac{1}{B\left(a,b\right)}\int_{0}^{1}\log\left(s\right)s^{a-1}\left(1-s\right)^{b-1}ds,\,\text{Re}(a)>0,\,\text{Re}(b)>0
\]
which is a simple consequence of the differentiation under the integral
sign, we get that the main term of (\ref{eq:main applied to M tilde})
is
\[
-\frac{1}{\varphi\left(q\right)}\sum_{\rho:\left|\gamma\right|\leq U}\frac{N^{\rho}\Gamma\left(\rho\right)}{\zeta^{\prime}(\rho)}\sum_{\chi\neq\chi_{0}}\overline{\chi}\left(N\right)\sum_{\rho_{\chi}:\left|\gamma_{\chi}\right|\leq T}\frac{N^{\rho_{\chi}}\Gamma\left(\rho_{\chi}\right)}{\Gamma\left(\rho_{\chi}+\rho+2\right)}\int_{0}^{\beta}f^{\prime\prime}\left(w\right)\mathcal{H}\left(N,w,\rho,\rho_{\chi}\right)w^{\rho+\rho_{\chi}+1}dw
\]
where $\mathcal{H}\left(N,w,\rho,\rho_{\chi}\right)$ is (\ref{eq:H storto}).
So it remains to evaluate the error term. Now, by (\ref{eq:M tilde explicit})
and arguing as in Theorem \ref{thm:Main1}, we may deduce that the
uniform bound for $0<x\leq CN,\,C>0$
\begin{equation}
\widetilde{M}\left(x\right)\ll\log\left(N\right)N^{1/2+\varepsilon}\label{eq:Unif M tilde}
\end{equation}
hence defining
\[
I_{1}^{*}=I_{1}^{*}(w,N,q,T):=\int_{0}^{Nw}\widetilde{M}\left(s\right)\sum_{\chi\neq\chi_{0}}\overline{\chi}\left(N\right)R_{1,\chi}(Nw-s,T,q)ds
\]
and by (\ref{eq:Unif M tilde}) and following the same calculations
of $I_{1}$ in Theorem \ref{thm:Main1}, we get
\begin{equation}
I_{1}^{*}\ll\varphi(q)\log\left(q\right)N^{3/2+\varepsilon}\log\left(N\right)^{3}w^{3/2+\varepsilon}\left(1+\left|\log\left(w\right)\right|^{3}\right)\left[1+\frac{N\left(1+w\right)\log\left(qT\right)^{2}}{T}\right].\label{eq:I_1 star first bound}
\end{equation}
Then we consider
\[
I_{2}^{*}=I_{2}^{*}(w,N,q,U):=\int_{0}^{Nw}\widetilde{R}_{2}\left(s,U\right)\sum_{\chi\neq\chi_{0}}\overline{\chi}\left(N\right)\psi(Nw-s,\chi)ds
\]
where $\widetilde{R}\left(s,U\right)$ is the error term in (\ref{eq:M tilde explicit}),
that is
\[
\widetilde{M}\left(s\right)=\sum_{\rho:\left|\gamma\right|\leq U}\frac{x^{\rho}}{\zeta^{\prime}(\rho)\rho}\left(\log\left(x\right)-\frac{1}{\rho}\right)+\widetilde{R}_{2}\left(s,U\right).
\]
Note that, compared to the estimate (\ref{eq:I_2 bound}) and (\ref{eq:I_2 bound-1})
we have only the additional term
\[
\varphi\left(q\right)N^{1/2}w^{1/2}\left|\log\left(Nw\right)\right|\left|\log\left(qNw\right)\right|\int_{0}^{yw}\left|\log\left(s\right)\right|ds
\]
\[
\ll\varphi\left(q\right)N^{3/2}w^{3/2}\log\left(Nw\right)^{2}\left|\log\left(qNw\right)\right|
\]
and this produces the bound
\[
I_{2}^{*}\ll\varphi\left(q\right)N^{3/2+\varepsilon}\log\left(q\right)^{3}\log\left(N\right)^{3}
\]
\begin{equation}
\times\left[w^{3/2}\left(1+w^{\varepsilon}\right)\left(1+\left|\log\left(w\right)\right|^{3}\right)+\frac{N^{1-\varepsilon}\left(1+\left|\log\left(w\right)\right|^{3}\right)w^{1/2+\varepsilon}\left(w^{2-\varepsilon}+1\right)}{U^{1-\varepsilon}}\right].\label{eq:I_2 star second bound}
\end{equation}
Finally, we evaluate
\[
I_{3}^{*}=I_{3}^{*}(w,N,q,U):=\int_{0}^{Nw}\widetilde{R}_{2}\left(s,U\right)\sum_{\chi\neq\chi_{0}}\overline{\chi}\left(N\right)R_{1,\chi}(Nw-s,T,q)ds.
\]
Arguing as in $I_{3}$ of Theorem \ref{thm:Main1}, we observe that
the only part we need to modified is 
\[
\int_{0}^{Nw}\widetilde{R_{2}}(s,U)^{2}ds\ll\int_{0}^{Nw}\left[1+s^{2\varepsilon}+\log\left(s\right)^{2}+\left[\frac{\left(s^{2}+s^{2\varepsilon}\right)\left(\left|\log\left(s\right)\right|^{2}+1\right)}{U^{2-2\varepsilon}}\right]\right]ds
\]
\[
\ll N^{1+2\varepsilon}\log\left(N\right)^{2}\left(1+\log\left(w\right)^{2}\right)\left(1+w^{1+2\varepsilon}\right)\left[1+\frac{N^{2-2\varepsilon}\left(1+w^{2-2\varepsilon}\right)\left(1+\log\left(w\right)^{2}\right)}{U^{2-2\varepsilon}}\right].
\]
 So 
\begin{equation}
I_{3}^{*}\ll\varphi(q)N^{1+\varepsilon}\log(q)\log\left(N\right)^{3}\left(1+\left|\log\left(w\right)\right|^{3}\right)\left(1+w^{1+\varepsilon}\right)\label{eq:I_3 star 1}
\end{equation}
\begin{equation}
\times\left[1+\frac{N\log\left(qT\right)^{2}}{T}\right]\left[1+\frac{N^{1-1\varepsilon}}{U^{1-1\varepsilon}}\right].\label{eq:I_3 star 2}
\end{equation}
Again, for simplicity, we observe that we can use the common bound
\begin{equation}
I_{1}^{*},I_{2}^{*}\ll\varphi\left(q\right)N^{3/2+\varepsilon}\log\left(q\right)^{3}\log\left(N\right)^{3}\left(1+w^{3/2+\varepsilon}\right)\left(1+\left|\log\left(w\right)\right|^{3}\right)\label{eq:I_1 star I_2 star 1}
\end{equation}
\begin{equation}
\times\left[1+\frac{N\left(1+w\right)\log\left(qT\right)^{2}}{T}+\frac{N^{1-\varepsilon}\left(w^{1-\varepsilon}+1\right)}{U^{1-\varepsilon}}\right]\label{eq:I_1 star I_2 star 2}
\end{equation}
and the result follows.
\end{proof}
So now we are able to prove the bound for the diagonal EH with $g(n)=\mu(n)\log(n)$
in the case that the weight is in Sobolev $W^{2,1}.$
\begin{thm}
\label{thm:dEH bound with log}Assume all the hypotheses of Theorem
\ref{thm:Main1} and fix $0<\varepsilon<1/2$ and $\theta=1-2\varepsilon$.
Then, the following weighted average of diagonal twisted EH holds
\[
\sum_{\underset{{\scriptstyle \left(N,q\right)=1}}{1<q\leq N^{1-2\varepsilon}}}\frac{1}{\varphi\left(q\right)}\left|\sum_{\chi\neq\chi_{0}}\overline{\chi}\left(N\right)\sum_{n\leq N\beta}\sum_{m\leq N\beta-n}\Lambda\left(n\right)\chi\left(n\right)\mu\left(m\right)\log\left(m\right)f\left(\frac{n+m}{N}\right)\right|
\]
\[
\ll_{\varepsilon}N^{2-\varepsilon}E_{1}\left(f^{\prime\prime}\right)
\]
where
\[
E_{1}\left(f^{\prime\prime}\right):=\int_{0}^{\beta}\left|f^{\prime\prime}\left(w\right)\right|\left(1+\left|\log\left(w\right)\right|^{3}\right)\left(1+w^{5/2+\varepsilon}\right)dw.
\]
and the implicit constant depends only on $\varepsilon$.
\end{thm}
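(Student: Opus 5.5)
We mirror the proof of Theorem \ref{thm:dEHtwis bound}, starting from the explicit formula of Theorem \ref{thm:explicit formula M tilde}. The task reduces to two estimates: the double sum over zeros weighted by the extra factor $\mathcal{H}(N,w,\rho,\rho_{\chi})$, and the error terms $\mathcal{E}_{1}^{*},\mathcal{E}_{2}^{*}$ after letting $T,U\rightarrow+\infty$.

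\textbf{Error terms.} The expressions (\ref{eq:E_1 star}) and (\ref{eq:E_2 star}) coincide with $\mathcal{E}_{1},\mathcal{E}_{2}$ of Theorem \ref{thm:Main1}. Letting $T,U\rightarrow+\infty$ therefore gives, exactly as before, an error bounded by
\[
\varphi(q)N^{1/2+\varepsilon}\log(q)^{3}\log(N)^{3}\int_{0}^{\beta}|f^{\prime\prime}(w)|\left(1+|\log(w)|^{3}\right)\left(1+w^{2}\right)dw .
\]
After dividing by $\varphi(q)$ and summing over $q\leq N^{1-2\varepsilon}$, this contributes $\ll N^{3/2+\varepsilon}\log(N)^{6}E_{1}(f^{\prime\prime})$, since $1+w^{2}\ll1+w^{5/2+\varepsilon}$.

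\textbf{Main term.} On the critical lines $|w^{\rho+\rho_{\chi}+1}|=w^{2}$. We bound
\[
|\mathcal{H}|\leq|\log N|+|\log w|+|\psi^{(0)}(\rho+1)-\psi^{(0)}(\rho+\rho_{\chi}+2)|+|\rho|^{-1}.
\]
Using $\psi^{(0)}(z)=\log z+O(1/|z|)$ for $\text{Re}(z)\geq1$, the digamma difference is $\ll\log(2+|\gamma|)+\log(2+|\gamma+\gamma_{\chi}|)$. The main term is then at most
\[
N\varphi(q)\int_{0}^{\beta}|f^{\prime\prime}(w)|w^{2}\left(\log N+|\log w|\right)dw
\]
times a double zero series as in (\ref{eq:esti chi mod q}), now carrying the additional factor $\log(2+|\gamma|)+\log(2+|\gamma_{\chi}|)$.

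Absolute convergence of this series still follows from Theorem \ref{thm:dobule series =00005Cchi}, with $f(z)=\log(2+|z|)/|\zeta^{\prime}(z)|$ and $g(z)=\log(2+|z|)$. The growth hypotheses $o(T^{\alpha})$ still hold: by Conjecture \ref{conj:simple zeros} and (\ref{eq:1/zeta^=00005Cprime}), the logarithms cost only powers of $\log T$.

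For the $q$-dependence we repeat the dyadic decomposition of Theorem \ref{thm:dEHtwis bound}, using Proposition \ref{prop:gamma est} (now with $\Gamma(\rho+\rho_{\chi}+2)$, giving decay $|\gamma+\gamma_{\chi}|^{-5/2}$). The extra factor $\log(2^{k})\ll k$ is absorbed by the geometric factor $2^{-k/4}$, so the series is $\ll\log(q)$, possibly up to one more power of $\log q$. The main term is therefore
\[
\ll N\varphi(q)\log(q)^{2}\log(N)\int_{0}^{\beta}|f^{\prime\prime}(w)|w^{2}\left(1+|\log w|\right)dw .
\]

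\textbf{Summation over $q$.} Dividing by $\varphi(q)$ and summing over $q\leq N^{1-2\varepsilon}$ gives
\[
\ll N^{2-2\varepsilon}\log(N)^{3}E_{1}(f^{\prime\prime})+N^{3/2+\varepsilon}\log(N)^{6}E_{1}(f^{\prime\prime})\ll_{\varepsilon}N^{2-\varepsilon}E_{1}(f^{\prime\prime}).
\]
Here $w^{2}$ and $1+w^{2}$ are dominated by $1+w^{5/2+\varepsilon}$. The larger exponent in $E_{1}$ leaves room for the extra $w^{1/2+\varepsilon}$-type factors in $I_{2}^{*}$, coming from (\ref{eq:I_2 star second bound}), if the limits are not taken crudely.

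\textbf{Main obstacle.} The delicate step is controlling the digamma factor uniformly in both $\rho$ and $\rho_{\chi}$. The worst configurations are near-cancellation, $\gamma_{\chi}\approx-\gamma$, where $\rho+\rho_{\chi}+2$ has small imaginary part. There $\psi^{(0)}(\rho+\rho_{\chi}+2)$ stays bounded, since the real part is $3$. Meanwhile $\psi^{(0)}(\rho+1)\ll\log(2+|\gamma|)$ is compensated by the exponential decay $e^{-\pi(|\gamma|+|\gamma_{\chi}|)/2}$ from Proposition \ref{prop:gamma est}. I would check this case first, then run the dyadic sums.
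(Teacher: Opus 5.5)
Your proof is correct and follows the paper's overall skeleton: the explicit formula of Theorem \ref{thm:explicit formula M tilde}, absolute convergence plus the dyadic $\log q$ bound for the zero series, and then summation over $q$. Where you differ is in how you handle the truncation.

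\textbf{How the paper handles truncation.} The paper does not let $T,U\rightarrow+\infty$ here. It bounds the digamma part of $\mathcal{H}$ crudely by $\log(T+U)$, uniformly over the truncated zeros, and is therefore forced to fix $T\asymp U\asymp N$. The term $N(1+w)\log(qT)^{2}/T$ in $\mathcal{E}_{1}^{*}$ then survives as a factor $(1+w)\log(qN)^{2}$. That factor is exactly where the weight $1+w^{5/2+\varepsilon}$ in $E_{1}(f^{\prime\prime})$ comes from.

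\textbf{How you handle it.} You put the factor $\log(2+|\gamma|)+\log(2+|\gamma_{\chi}|)$ inside the zero sums. You check that the double-series convergence theorem and the dyadic argument still apply with these logarithmic weights. You can then remove the truncation exactly as in Theorem \ref{thm:dEHtwis bound}.

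\textbf{Comparison.} Your route costs a little more verification, and at most one extra power of $\log q$. In exchange it gives the slightly stronger weight $1+w^{2}$, i.e.\ the same $E(f^{\prime\prime})$ as in the non-logarithmic case. This trivially implies the stated bound with $E_{1}(f^{\prime\prime})$.

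\textbf{Arithmetic slip.} Summing $N^{1/2+\varepsilon}\log(q)^{3}\log(N)^{3}$ over $q\leq N^{1-2\varepsilon}$ gives $N^{3/2-\varepsilon}\log(N)^{6}$, not $N^{3/2+\varepsilon}\log(N)^{6}$. As written, your final inequality $N^{3/2+\varepsilon}\log(N)^{6}\ll N^{2-\varepsilon}$ fails for $1/4\leq\varepsilon<1/2$. With the correct exponent it holds on the whole range $0<\varepsilon<1/2$.
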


\begin{proof}
From Theorem \ref{thm:explicit formula M tilde} we know that we need
to bound the explicit formula. We start from the main term. Recalling
the asymptotic formula
\begin{equation}
\psi^{\left(0\right)}\left(z\right)\sim\log\left(z\right)-\frac{1}{2z}-\sum_{k\geq0}\frac{B_{2k}}{2kz^{2k}}\label{eq:asymp digamma}
\end{equation}
as $\left|z\right|\rightarrow+\infty,\,\left|\arg\left(z\right)\right|\leq\pi-\epsilon,\,\epsilon>0$
(see \cite{O2010}, $5.11.2$) where $\log\left(z\right)$ is taken
with its principal branch, we obtain
\[
\psi^{\left(0\right)}\left(\rho+1\right)-\psi^{\left(0\right)}\left(\rho_{\chi}+\rho+2\right)-\frac{1}{\rho}
\]
\[
\ll\log\left(\left|\gamma\right|\right)+\log\left(1+\left|\gamma+\gamma_{\chi}\right|\right)+\frac{1}{\left|\rho\right|}
\]
\[
\ll\log\left(T+U\right)
\]
hence
\[
\frac{1}{\varphi(q)}\sum_{\rho:\left|\gamma\right|\leq U}\frac{N^{\rho}\Gamma\left(\rho\right)}{\zeta^{\prime}(\rho)}\sum_{\chi\neq\chi_{0}}\overline{\chi}\left(N\right)\sum_{\rho_{\chi}:\left|\gamma_{\chi}\right|\leq T}\frac{N^{\rho_{\chi}}\Gamma\left(\rho_{\chi}\right)}{\Gamma\left(\rho_{\chi}+\rho+2\right)}\int_{0}^{\beta}f^{\prime\prime}\left(w\right)\mathcal{H}\left(N,w,\rho,\rho_{\chi}\right)w^{\rho+\rho_{\chi}+1}dw
\]
\[
\ll\frac{N\log\left(N\right)\log\left(T+U\right)}{\varphi(q)}\sum_{\rho:\left|\gamma\right|\leq U}\left|\frac{\Gamma\left(\rho\right)}{\zeta^{\prime}(\rho)}\right|\sum_{\chi\neq\chi_{0}}\sum_{\rho_{\chi}:\left|\gamma_{\chi}\right|\leq T}\left|\frac{\Gamma\left(\rho_{\chi}\right)}{\Gamma\left(\rho_{\chi}+\rho+2\right)}\right|
\]
\[
\times\int_{0}^{\beta}\left|f^{\prime\prime}\left(w\right)\right|\left(1+\left|\log\left(w\right)\right|\right)w^{2}dw
\]
hence if we take $T\asymp U\asymp N$ we get the bound
\[
N\log\left(N\right)^{2}\max_{\chi\mod q}\sum_{\rho:\left|\gamma\right|\leq U}\left|\frac{\Gamma\left(\rho\right)}{\zeta^{\prime}(\rho)}\right|\sum_{\rho_{\chi}:\left|\gamma_{\chi}\right|\leq T}\left|\frac{\Gamma\left(\rho_{\chi}\right)}{\Gamma\left(\rho_{\chi}+\rho+2\right)}\right|
\]
\[
\times\int_{0}^{\beta}\left|f^{\prime\prime}\left(w\right)\right|\left(1+\left|\log\left(w\right)\right|\right)w^{2}dw
\]
and arguing as in (\ref{eq:esti chi mod q}) we get that the double
series is absolutely convergent and bounded by $\log\left(q\right)$.
Hence the bound for the main term is
\[
N\log\left(N\right)^{2}\log\left(q\right)\int_{0}^{\beta}\left|f^{\prime\prime}\left(w\right)\right|\left(1+\left|\log\left(w\right)\right|\right)w^{2}dw.
\]
Now let us focus on the error term. By (\ref{eq:E_1 star}), (\ref{eq:E_2 star}),
from the choice $T\asymp U\asymp N$ and trivial manipulations we
get
\[
\mathcal{E}_{1}^{*}\left(N,w,q,U,T\right)\ll\varphi\left(q\right)N^{3/2+\varepsilon}\log\left(q\right)^{5}\log\left(N\right)^{5}\left(1+w^{5/2+\varepsilon}\right)\left(1+\left|\log\left(w\right)\right|^{3}\right)
\]
\[
\mathcal{E}_{2}^{*}\left(N,w,q,U,T\right)\ll\varphi\left(q\right)N^{1+\varepsilon}\log(q)^{2}\log\left(N\right)^{5}\left(1+\left|\log\left(w\right)\right|^{3}\right)\left(1+w^{1+\varepsilon}\right)
\]
hence, putting together all the pieces and taking the sum $\sum_{\underset{{\scriptstyle \left(N,q\right)=1}}{q\leq N^{\theta}}}$
we get
\[
\sum_{\underset{{\scriptstyle \left(N,q\right)=1}}{q\leq N^{\theta}}}\frac{1}{\varphi\left(q\right)}\left|\sum_{\chi\neq\chi_{0}}\overline{\chi}\left(N\right)\sum_{n\leq N\beta}\sum_{m\leq N\beta-n}\Lambda\left(n\right)\chi\left(n\right)\mu\left(m\right)\log\left(m\right)f\left(\frac{n+m}{N}\right)\right|
\]
\[
\ll_{\varepsilon}N^{2-\varepsilon}E_{1}\left(f^{\prime\prime}\right)
\]
with $\theta=1-2\varepsilon$ and 
\[
E_{1}\left(f^{\prime\prime}\right):=\int_{0}^{\beta}\left|f^{\prime\prime}\left(w\right)\right|\left(1+\left|\log\left(w\right)\right|^{3}\right)\left(1+w^{5/2+\varepsilon}\right)dw.
\]
\end{proof}
Now we consider the case of $f\in\mathcal{C}^{\delta}\left(\mathbb{R}\right)$.
Also in this case, we start with an explicit formula for the average. 
\begin{thm}
\label{thm:dEh with log explicit formula Zyg}Assume all the hypotheses
of Theorem \ref{thm:explicit discrete}. Then
\[
\frac{1}{\varphi\left(q\right)}\sum_{\chi\neq\chi_{0}}\overline{\chi}\left(N\right)\sum_{n\leq N\beta}\sum_{m\leq N\beta-n}\Lambda\left(n\right)\chi\left(n\right)\mu\left(m\right)\log\left(m\right)f\left(\frac{n+m}{N}\right)
\]
\[
=-\frac{1}{\varphi\left(q\right)}\sum_{\rho:\left|\gamma\right|\leq U}\frac{\Gamma\left(\rho\right)}{\zeta^{\prime}(\rho)}\sum_{\chi\neq\chi_{0}}\overline{\chi}\left(N\right)\sum_{\rho_{\chi}:\left|\gamma_{\chi}\right|\leq T}\frac{\Gamma\left(\rho_{\chi}\right)}{\Gamma\left(\rho_{\chi}+\rho+2\right)}
\]
\[
\times\sum_{k\leq\beta N-1}\Delta_{1/N}^{2}\left(f,\frac{k}{N}\right)\mathcal{H}\left(N,\frac{k}{N},\rho,\rho_{\chi}\right)k^{\rho+\rho_{\chi}+1}
\]
\[
+O_{f,\beta,\varepsilon}\left(N^{5/2-\delta}\log\left(N\right)\log\left(qT\right)^{2}+\frac{N^{3-\delta}\log\left(N\right)\log\left(qNT\right)^{2}}{T}\right..
\]
\[
\left.+\log\left(q\right)^{3}\varphi\left(q\right)N^{5/2-\delta+\varepsilon}\log\left(N\right)^{3}\left[1+\frac{N\log\left(qT\right)^{2}}{T}\right]\left[1+\frac{N^{1-\varepsilon}}{U^{1-\varepsilon}}\right]\right).
\]
\end{thm}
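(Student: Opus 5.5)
The plan is to repeat the proof of Theorem \ref{thm:explicit discrete}, replacing $M$ by $\widetilde{M}$ and borrowing the zero-sum computation from Theorem \ref{thm:explicit formula M tilde}.

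\textbf{Step 1: reduction to convolutions.} Apply Corollary \ref{cor:discrete} with $g_{1}(n)=\Lambda(n)\chi(n)$, $g_{2}(m)=\mu(m)\log(m)$ and $\eta=N$. The hypotheses $G_{1}(1)=G_{1}(0)=G_{2}(0)=0$ hold because $\psi(1,\chi)=0$. For $f\in\mathcal{C}^{\delta}(\mathbb{R})$, Theorem \ref{thm:finitediff bound} gives $\Delta_{1/N}^{2}(f,k/N)\ll_{f}N^{-\delta}$. The weighted average therefore splits into two pieces:
\begin{itemize}
\item a continuous part $\sum_{k}\Delta_{1/N}^{2}(f,k/N)\int_{0}^{k}\widetilde{M}(s)\psi(k-s,\chi)\,ds$;
\item a discrete remainder bounded by $N^{-\delta}\varphi(q)^{-1}\sum_{k}\sum_{\chi}\bigl|\sum_{u}\psi(u,\chi)\mu(k-u)\log(k-u)\bigr|$.
\end{itemize}

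\textbf{Step 2: the continuous part.} Insert the explicit formulae (\ref{eq:M tilde explicit}) and Theorem \ref{thm:Explicit psi}, and dissect exactly as in (\ref{eq:dissect1})--(\ref{eq:dissect3}).
\begin{itemize}
\item \emph{Main term.} It is the Beta-type convolution $\int_{0}^{k}s^{\rho}(\log s-1/\rho)(k-s)^{\rho_{\chi}}\,ds$. The Digamma representation used in Theorem \ref{thm:explicit formula M tilde} evaluates it as
\[
k^{\rho+\rho_{\chi}+1}B(\rho+1,\rho_{\chi}+1)\,\mathcal{H}(N,k/N,\rho,\rho_{\chi}).
\]
After multiplying by $1/(\zeta'(\rho)\rho\,\rho_{\chi})$, this produces the stated main term with the factor $\Gamma(\rho)\Gamma(\rho_{\chi})/\Gamma(\rho+\rho_{\chi}+2)$. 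Note that $\log(N\cdot k/N)=\log k$.
\item \emph{Remainders.} These are $I_{1}^{*},I_{2}^{*},I_{3}^{*}$ evaluated at $w=k/N$, with the bounds (\ref{eq:I_1 star I_2 star 1})--(\ref{eq:I_1 star I_2 star 2}) and (\ref{eq:I_3 star 1})--(\ref{eq:I_3 star 2}). Multiplying by $N^{-\delta}$ and summing over $k\le N\beta-1$ is the same computation as in Theorem \ref{thm:explicit discrete}. Since $k\le\beta N$, the factors $(1+|\log w|^{3})$ become $\ll\log(N)^{3}$ times powers of $k$. This yields
\[
\log(q)^{3}\varphi(q)N^{5/2-\delta+\varepsilon}\log(N)^{3}\Bigl[1+\frac{N\log(qT)^{2}}{T}\Bigr]\Bigl[1+\frac{N^{1-\varepsilon}}{U^{1-\varepsilon}}\Bigr].
\]
The extra $\log$ coming from (\ref{eq:Unif M tilde}) is absorbed into the $\log(N)^{3}$ and $O_{f,\beta,\varepsilon}$ constants. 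This matches the claimed error, including the factor $\varphi(q)$ carried over.
\end{itemize}

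\textbf{Step 3: the discrete remainder.} Use Theorem \ref{thm:Explicit psi} for $\psi(u,\chi)$ and the trivial bound $|\mu(k-u)\log(k-u)|\le\log N$. This is the only change from the $\mu$ case, and it accounts for the extra $\log(N)$ in the first two error terms.
\begin{itemize}
\item \emph{Zero sum.} It contributes $N^{-\delta}\sum_{k}k^{3/2}\log N\sum_{\rho_{\chi}}1/|\rho_{\chi}|\ll N^{5/2-\delta}\log(N)\log(qT)^{2}$.
\item \emph{$R_{\chi}$ part.} It contributes $N^{2-\delta}\log(N)^{2}$ plus $N^{3-\delta}\log(N)\log(qNT)^{2}/T$.
\end{itemize}
The first of these is dominated by $N^{5/2-\delta}\log(N)\log(qT)^{2}$.

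\textbf{Main obstacle.} The delicate point is the main-term identity in Step 2. The point $s=0$ must be handled: the $x\in(0,1)$ extension (\ref{eq:ext trick}) of $\widetilde{M}$ lets us integrate from $0$ without incomplete Beta functions. The Digamma formula must be applied with $a=\rho+1$, $b=\rho_{\chi}+1$, both with positive real part, which is fine under GRH. Finally, interchanging the finite zero sums with the $k$-sum is harmless, since all sums are truncated.
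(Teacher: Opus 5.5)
Your proposal is correct and follows the paper's proof essentially step for step. Both arguments use the same ingredients in the same order:
\begin{itemize}
\item Corollary \ref{cor:discrete} with $g_1=\Lambda\chi$ and $g_2=\mu\log$;
\item the main term from the Beta/Digamma evaluation of Theorem \ref{thm:explicit formula M tilde};
\item the remainders $I_1^*,I_2^*,I_3^*$ at $w=k/N$, weighted by $N^{-\delta}$ and summed over $k$;
\item the discrete remainder via Theorem \ref{thm:Explicit psi} together with $|\mu(k-u)\log(k-u)|\ll\log N$.
\end{itemize}
Two small remarks. The extra $\log N$ from (\ref{eq:Unif M tilde}) cannot be absorbed into a constant, but it is already built into the $\log(N)^3$ of the $I^*$ bounds you cite. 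The term $u=1$, where Theorem \ref{thm:Explicit psi} does not apply, should simply be dropped, since $\psi(1,\chi)=0$.
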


\begin{proof}
Arguing as in Theorem \ref{thm:explicit discrete} we have to deal
with
\[
\frac{1}{\varphi\left(q\right)}\sum_{\chi\neq\chi_{0}}\overline{\chi}\left(N\right)\sum_{n\leq N\beta}\sum_{m\leq N\beta-n}\Lambda\left(n\right)\chi\left(n\right)\mu\left(m\right)\log\left(m\right)f\left(\frac{n+m}{N}\right)
\]
\[
=\frac{1}{\varphi\left(q\right)}\sum_{k\leq N\beta-1}\Delta_{1/N}^{2}\left(f,\frac{k}{N}\right)\sum_{\chi\neq\chi_{0}}\overline{\chi}\left(N\right)\int_{0}^{k}\psi\left(s,\chi\right)\widetilde{M}\left(k-s\right)ds
\]
\[
+O_{f,\beta}\left(\frac{N^{-\delta}}{\varphi(q)}\sum_{k\leq N\beta-1}\sum_{\chi\neq\chi_{0}}\left|\sum_{u=0}^{k}\psi(\chi,u)\mu(k-u)\log\left(k-u\right)\right|\right).
\]
By the explicit formula in Theorem \ref{thm:explicit formula M tilde}
we have
\[
\frac{1}{\varphi\left(q\right)}\sum_{k\leq N\beta-1}\Delta_{1/N}^{2}\left(f,\frac{k}{N}\right)\sum_{\chi\neq\chi_{0}}\overline{\chi}\left(N\right)\int_{0}^{k}\psi\left(s,\chi\right)\widetilde{M}\left(k-s\right)ds
\]
\[
=\frac{1}{\varphi\left(q\right)}\sum_{\rho:\left|\gamma\right|\leq U}\frac{\Gamma\left(\rho\right)}{\zeta^{\prime}(\rho)}\sum_{\chi\neq\chi_{0}}\overline{\chi}\left(N\right)\sum_{\rho_{\chi}:\left|\gamma_{\chi}\right|\leq T}\frac{\Gamma\left(\rho_{\chi}\right)}{\Gamma\left(\rho_{\chi}+\rho+2\right)}
\]
\[
\times\sum_{k\leq\beta N-1}\Delta_{1/N}^{2}\left(f,\frac{k}{N}\right)\mathcal{H}\left(N,\frac{k}{N},\rho,\rho_{\chi}\right)k^{\rho+\rho_{\chi}+1}
\]
\[
+O_{f}\left(\frac{N^{-\delta}}{\varphi\left(q\right)}\sum_{k\leq N\beta-1}\left[I_{1}^{*}\left(k/N,N,q,T\right)+I_{2}^{*}\left(k/N,N,q,U\right)+I_{3}^{*}\left(k/N,N,q,T,U\right)\right]\right)
\]
\[
+O_{f,\beta}\left(\frac{N^{-\delta}}{\varphi(q)}\sum_{k\leq N\beta-1}\sum_{\chi\neq\chi_{0}}\left|\sum_{u=0}^{k}\psi\left(u,\chi\right)\mu\left(k-u\right)\log\left(k-u\right)\right|\right).
\]
We start to analyze the first error term. From the same calculations
of (\ref{eq:I_1 star I_2 star 1}), (\ref{eq:I_1 star I_2 star 2}),
and recalling that the convolution now is from $0$ to $k$ instead
of $0$ to $Nw$, we obtain the bound
\[
\log\left(q\right)^{3}\varphi\left(q\right)N^{-\delta}\sum_{k\leq N\beta-1}k^{3/2+\varepsilon}\log\left(k\right)^{3}\left[1+\frac{k\log\left(qT\right)^{2}}{T}+\frac{k^{1-\varepsilon}}{U^{1-\varepsilon}}\right]
\]
\[
\ll_{\varepsilon,\beta}\log\left(q\right)^{3}\varphi\left(q\right)N^{5/2-\delta+\varepsilon}\log\left(N\right)^{3}\left[1+\frac{N\log\left(qT\right)^{2}}{T}+\frac{N^{1-\varepsilon}}{U^{1-\varepsilon}}\right]
\]
and in the same spirit, considering (\ref{eq:I_3 star 1}) and (\ref{eq:I_3 star 2}),
we obtain the bound
\[
\log(q)\varphi\left(q\right)N^{-\delta}\sum_{k\leq N\beta-1}k^{1+\varepsilon}\log\left(k\right)^{3}\left[1+\frac{k\log\left(qT\right)^{2}}{T}\right]\left[1+\frac{k^{1-\varepsilon}}{U^{1-1\varepsilon}}\right]
\]
\[
\ll_{\varepsilon,\beta}\log(q)\varphi\left(q\right)N^{2+\varepsilon-\delta}\log\left(N\right)^{3}\left[1+\frac{N\log\left(qT\right)^{2}}{T}\right]\left[1+\frac{N^{1-\varepsilon}}{U^{1-1\varepsilon}}\right].
\]
Now, we consider the second error term. Again by the explicit formula
in Theorem \ref{thm:Explicit psi} we get
\[
\frac{N^{-\delta}}{\varphi(q)}\sum_{k\leq N\beta-1}\sum_{\chi\neq\chi_{0}}\left|\sum_{u=0}^{k}\psi\left(u,\chi\right)\mu\left(k-u\right)\log\left(k-u\right)\right|
\]
\[
=\frac{N^{-\delta}}{\varphi(q)}\sum_{k\leq N\beta-1}\sum_{\chi\neq\chi_{0}}\left|\sum_{\rho_{\chi}:\left|\gamma_{\chi}\right|\leq T}\frac{1}{\rho_{\chi}}\sum_{u=1}^{k-1}u^{\rho_{\chi}}\mu\left(k-u\right)\log\left(k-u\right)\right|
\]
\[
+O_{\beta}\left(N^{2-\delta}\log\left(N\right)^{2}+N^{-\delta}\log\left(N\right)\sum_{k\leq N\beta-1}\sum_{u=1}^{k-1}\frac{u\log\left(quT\right)^{2}}{T}\right).
\]
Again we have
\[
N^{-\delta}\log\left(N\right)\sum_{k\leq N\beta-1}\sum_{u=1}^{k-1}\frac{u\log\left(quT\right)^{2}}{T}\ll\frac{N^{3-\delta}\log\left(N\right)\log\left(qNT\right)^{2}}{T}
\]
and
\[
\frac{N^{-\delta}}{\varphi(q)}\sum_{k\leq N\beta-1}\sum_{\chi\neq\chi_{0}}\left|\sum_{\rho_{\chi}:\left|\gamma_{\chi}\right|\leq T}\frac{1}{\rho_{\chi}}\sum_{u=1}^{k-1}u^{\rho_{\chi}}\mu(k-u)\log\left(k-u\right)\right|
\]
\[
\ll\frac{N^{-\delta}\log\left(N\right)}{\varphi(q)}\sum_{k\leq N\beta-1}k^{3/2}\sum_{\chi\neq\chi_{0}}\sum_{\rho_{\chi}:\left|\gamma_{\chi}\right|\leq T}\frac{1}{\left|\rho_{\chi}\right|}\ll N^{5/2-\delta}\log\left(N\right)\log\left(qT\right)^{2}
\]
and the thesis follows combining all the pieces.
\end{proof}
Finally, we can prove the bound for the diagonal EH problem with $f\in\mathcal{C}^{\delta}\left(\mathbb{R}\right)$.
\begin{thm}
\label{thm:dEH average with log}Assume all the hypotheses of Theorem
\ref{thm:explicit discrete} and fix $\theta=\delta-1-2\varepsilon$
if $\delta\in\left[\frac{3}{2},2\right)$ with $0<\varepsilon<\left(\delta-1\right)/2$
and $\theta=\frac{1}{2}-2\varepsilon$ if $\delta\in\left[1,3/2\right)$
with $0<\varepsilon<1/4$. Then
\[
\sum_{\underset{{\scriptstyle \left(N,q\right)=1}}{1<q\leq N^{\theta}}}\frac{1}{\varphi\left(q\right)}\left|\sum_{\chi\neq\chi_{0}}\overline{\chi}\left(N\right)\sum_{n\leq N\beta}\sum_{m\leq N\beta-n}\Lambda\left(n\right)\chi\left(n\right)\mu\left(m\right)\log\left(m\right)f\left(\frac{n+m}{N}\right)\right|
\]
\[
\ll_{f,\beta,\varepsilon,\delta}N^{2-\varepsilon}.
\]
\end{thm}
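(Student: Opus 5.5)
The plan is to mirror the proof of Theorem \ref{thm:dEhtwist discrete}, with the explicit formula of Theorem \ref{thm:dEh with log explicit formula Zyg} as the starting point. The one new ingredient is the factor $\mathcal{H}\left(N,\frac{k}{N},\rho,\rho_{\chi}\right)$ in the main term. Throughout I would take $U<T$ with $U\asymp T\asymp N$, exactly as before.

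\textbf{Error term.} With this choice, the error in Theorem \ref{thm:dEh with log explicit formula Zyg} should collapse to $\ll_{\varepsilon}\log(q)^{5}N^{5/2-\delta+\varepsilon}\log(N)^{6}$. The $\varphi(q)$ written in front of the third error term has to be read as in Theorem \ref{thm:explicit discrete}, i.e. it is cancelled by the prefactor $1/\varphi(q)$. Summing over $q\leq N^{\theta}$ then gives $\ll N^{5/2-\delta+\theta+\varepsilon}\log(N)^{11}$. This is $\ll N^{2-\varepsilon}$ once $\theta\leq\delta-1/2-3\varepsilon$, which holds for both admissible choices of $\theta$ in the statement.

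\textbf{Main term, case $\delta\in[3/2,2)$.} For $\rho,\rho_{\chi}$ in the truncated ranges, the digamma asymptotic (\ref{eq:asymp digamma}) gives $\mathcal{H}\ll\log N+\log(T+U)\ll\log N$. Together with $\Delta^{2}_{1/N}\ll N^{-\delta}$ and $|k^{\rho+\rho_{\chi}+1}|\leq k^{2}$, the $k$-sum is $\ll N^{3-\delta}\log N$. The remaining double zero sum is absolutely convergent and $\ll\log q$, by Theorem \ref{thm:dobule series =00005Cchi} and the dyadic argument of Theorem \ref{thm:dEHtwis bound}. Summing over $q\leq N^{\delta-1-2\varepsilon}$ gives $N^{2-2\varepsilon}\log(N)^{2}\ll N^{2-\varepsilon}$.

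\textbf{Main term, case $\delta\in[1,3/2)$.} This is the main obstacle, because $\mathcal{H}$ now depends on $k$ through $\log k$. I would sum by parts in $k$ using $\Delta^{2}_{1/N}(f,\frac{k}{N})=\Delta_{1/N}(f,\frac{k+1}{N})-\Delta_{1/N}(f,\frac{k}{N})$, applied to the function $h(k):=\mathcal{H}(N,k/N,\rho,\rho_{\chi})k^{\rho+\rho_{\chi}+1}$, for which $\log(Nk/N)=\log k$. The bound $h(k)-h(k-1)=\int_{k-1}^{k}h'$ holds with
\[
h'(v)=v^{\rho+\rho_{\chi}}\bigl((\rho+\rho_{\chi}+1)\mathcal{H}+1\bigr)\ll v^{1}\,|\rho+\rho_{\chi}+1|\log N .
\]
The boundary term is $\ll N\log^{2}N$. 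Using the log-Lipschitz bound (\ref{eq:LogLip}), $\Delta_{1/N}f\ll N^{-1}\log N$, the sum is then $\ll N\log(N)^{2}|\rho+\rho_{\chi}+1|$. This reduces the main term to the same zero sum with $\Gamma(\rho_{\chi}+\rho+1)$ in the denominator that was already handled in Theorem \ref{thm:dEhtwist discrete}. That zero sum splits into the regimes $|\gamma+\gamma_{\chi}|<1$, same sign, and opposite signs, plus $\gamma_{\chi}=0$, and is $\ll\log(q)(TU)^{1/4}\log(qT)^{2}\log(U)^{1/2}\ll N^{1/2}\log(N)^{9/4}\log(q)$.

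Each $q$ therefore contributes $\ll N^{3/2}\log(N)^{17/4}\log(q)^{2}$. Summing over $q\leq N^{1/2-2\varepsilon}$ gives $N^{2-2\varepsilon}\log(N)^{C}\ll N^{2-\varepsilon}$. One care point remains: I would need to check that the summation-by-parts boundary term at $k=N\beta-1$, and the terms where $\log k$ is small, do not spoil the bound. Both are harmless because $|\mathcal{H}|\ll\log N$ uniformly for $1\leq k\leq N\beta$.
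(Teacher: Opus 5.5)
Your proposal follows the paper's proof essentially step for step. It uses the same explicit formula (Theorem \ref{thm:dEh with log explicit formula Zyg}) with $U\asymp T\asymp N$. For $\delta\in[3/2,2)$ it uses the same crude bound $\mathcal{H}\ll\log N$ together with the $\log q$ bound for the absolutely convergent double series. For $\delta\in[1,3/2)$ it uses summation by parts, the log-Lipschitz bound and $\Gamma(z+1)=z\Gamma(z)$ to reach the $\Gamma(\rho_{\chi}+\rho+1)$ zero sum, which is of size $N^{1/2}$ up to logarithms. Differentiating $h(v)=\mathcal{H}\,v^{\rho+\rho_{\chi}+1}$ as a whole is a slightly tidier packaging than the paper's separate handling of $\log k$ and of the digamma part. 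Your reading of the stray $\varphi(q)$ is the correct one.

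\textbf{The error-term check is wrong as written.} Your sufficient condition $\theta\le\delta-\frac12-3\varepsilon$ fails for $\theta=\frac12-2\varepsilon$ whenever $\delta<1+\varepsilon$, so it does not ``hold for both admissible choices of $\theta$''.
\begin{itemize}
\item For $1<\delta<1+\varepsilon$ this is harmless. After summing over $q$ the true exponent is $3-\delta-\varepsilon<2-\varepsilon$, so the logarithms are absorbed with a $\delta$-dependent constant.
\item At $\delta=1$ the error contributes $N^{1/2-2\varepsilon}\cdot N^{3/2+\varepsilon}(\log N)^{C}=N^{2-\varepsilon}(\log N)^{C}$. This is not $\ll N^{2-\varepsilon}$.
\end{itemize}

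\textbf{The fix is to decouple the two epsilons.} The $\varepsilon$ entering through $M(x)\ll x^{1/2+\varepsilon}$ and Theorem \ref{thm:Explicit M} is independent of the one defining $\theta$. Apply Theorem \ref{thm:dEh with log explicit formula Zyg} with $\varepsilon/2$ in place of $\varepsilon$. The summed error then becomes $N^{3-\delta-3\varepsilon/2}(\log N)^{C}\ll N^{2-\varepsilon}$ for every $\delta\ge1$.

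The paper's own proof has the same issue, hidden by an arithmetic slip. It records the summed error as $N^{3-\delta-2\varepsilon}\log(N)^{8}$, but $N^{1/2-2\varepsilon}\cdot N^{5/2-\delta+\varepsilon}=N^{3-\delta-\varepsilon}$.

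\textbf{A small correction to your closing care point.} The place where $\mathcal{H}$ is not $\ll\log N$ is not at small $k$. It is on the interval $v\in(0,1)$ inside $h(1)-h(0)=\int_{0}^{1}h'(v)\,dv$, where $\log v$ is unbounded. This is harmless because $|v^{\rho+\rho_{\chi}}\log v|=v|\log v|\le e^{-1}$.
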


\begin{proof}
We start again considering the case $\delta\in\left[\frac{3}{2},2\right)$.
By the explicit formula in Theorem \ref{thm:dEh with log explicit formula Zyg}
and (\ref{eq:asymp digamma}) we get
\[
\frac{1}{\varphi\left(q\right)}\sum_{\rho:\left|\gamma\right|\leq U}\frac{\Gamma\left(\rho\right)}{\zeta^{\prime}(\rho)}\sum_{\chi\neq\chi_{0}}\overline{\chi}\left(N\right)\sum_{\rho_{\chi}:\left|\gamma_{\chi}\right|\leq T}\frac{\Gamma\left(\rho_{\chi}\right)}{\Gamma\left(\rho_{\chi}+\rho+2\right)}
\]
\[
\times\sum_{k\leq N\beta-1}\Delta_{1/N}^{2}\left(f,\frac{k}{N}\right)\mathcal{H}\left(N,\frac{k}{N},\rho,\rho_{\chi}\right)k^{\rho+\rho_{\chi}+1}
\]
\[
\ll_{f,\beta}\frac{N^{3-\delta}\log\left(T+U\right)}{\varphi(q)}\sum_{\rho:\left|\gamma\right|\leq U}\left|\frac{\Gamma\left(\rho\right)}{\zeta^{\prime}(\rho)}\right|\sum_{\chi\neq\chi_{0}}\sum_{\rho_{\chi}:\left|\gamma_{\chi}\right|\leq T}\left|\frac{\Gamma\left(\rho_{\chi}\right)}{\Gamma\left(\rho_{\chi}+\rho+2\right)}\right|
\]
then, taking $T\asymp U\asymp N$ and arguing as in Theorem \ref{thm:dEHtwis bound}
for the analysis of the double series, we obtain the bound
\[
\ll_{f,\beta,\delta}\frac{N^{3-\delta}\log\left(N\right)\log\left(q\right)}{\varphi(q)}\ll_{f,\beta,\delta}\frac{N^{3-\delta}\log\left(N\right)^{2}}{\varphi(q)}.
\]
For what concerning the error term, with the choice $T\asymp U\asymp N$,
we obtain 
\[
E_{\max}^{**}:=N^{5/2-\delta+\varepsilon}\log\left(N\right)\log\left(qT\right)^{2}+\frac{N^{3-\delta}\log\left(N\right)\log\left(qNT\right)^{2}}{T}
\]
\[
+\log\left(q\right)^{3}N^{5/2-\delta+\varepsilon}\log\left(N\right)^{3}\left[1+\frac{N\log\left(qT\right)^{2}}{T}\right]\left[1+\frac{N^{1-\varepsilon}}{U^{1-1\varepsilon}}\right]
\]
\[
\ll_{\varepsilon,\theta}N^{5/2-\delta+\varepsilon}\log\left(N\right)^{8}
\]
hence, taking $\theta=\delta-1-2\varepsilon$, we obtain
\[
\sum_{1<q\leq N^{\delta-1-2\varepsilon}}\frac{1}{\varphi\left(q\right)}\left|\sum_{\chi\neq\chi_{0}}\overline{\chi}\left(N\right)\sum_{n\leq N\beta}\sum_{m\leq N\beta-n}\Lambda\left(n\right)\chi\left(n\right)\mu\left(m\right)\log\left(m\right)f\left(\frac{n+m}{N}\right)\right|
\]
\[
\ll_{f,\beta,\varepsilon,\delta}N^{2-2\varepsilon}\log\left(N\right)^{2}+N^{3/2-2\varepsilon}\log\left(N\right)^{8}\ll_{f,\beta,\varepsilon,\delta}N^{2-\varepsilon}.
\]
Now we focus on the case $\delta\in\left[1,\frac{3}{2}\right)$. Assuming
again that $U\asymp T\asymp N$ the bound of the error term is the
same of the previous part, so
\[
E_{\max}^{**}\ll_{\varepsilon}N^{5/2-\delta+\varepsilon}\log\left(N\right)^{8}
\]
So now we focus on the main term. Clearly
\[
\mathcal{H}\left(N,\frac{k}{N},\rho,\rho_{\chi}\right)=\log\left(k\right)+\psi^{\left(0\right)}\left(\rho+1\right)-\psi^{\left(0\right)}\left(\rho_{\chi}+\rho+2\right)-\frac{1}{\rho}
\]
hence the significant difference here respect the previous results
is the factor $\log\left(k\right)$ in the sum. Using summation by
parts and property (\ref{eq:prod fow diff}) we observe that 
\[
\sum_{k\leq N\beta-1}\Delta_{1/N}^{2}\left(f,\frac{k}{N}\right)\log\left(k\right)k^{\rho+\rho_{\chi}+1}=\Delta_{1/N}\left(f,\beta\right)\log\left(N\beta\right)\left(N\beta-1\right)^{\rho+\rho_{\chi}+1}
\]
\[
-\sum_{k\leq N\beta-1}\Delta_{1/N}\left(f,\frac{k}{N}\right)\log\left(k\right)\left(k^{\rho+\rho_{\chi}+1}-\left(k-1\right)^{\rho+\rho_{\chi}+1}\right).
\]
From (\ref{eq:LogLip}) we have
\[
\Delta_{1/N}\left(f,\beta\right)\log\left(N\beta\right)\left(N\beta-1\right)^{\rho+\rho_{\chi}+1}\ll_{\beta}N\log\left(N\right)^{2}
\]
and 
\[
\sum_{k\leq N\beta-1}\Delta_{1/N}\left(f,\frac{k}{N}\right)\log\left(k\right)\left(k^{\rho+\rho_{\chi}+1}-\left(k-1\right)^{\rho+\rho_{\chi}+1}\right)
\]
\[
\ll_{f,\beta}N\log\left(N\right)^{2}\left|\rho+\rho_{\chi}+1\right|
\]
using the same ideas of Theorem \ref{thm:MainDiscrete}. So, since
the part $\psi^{\left(0\right)}\left(\rho+1\right)-\psi^{\left(0\right)}\left(\rho_{\chi}+\rho+2\right)-\frac{1}{\rho}$
produces a bound $\log\left(T+U\right)\ll\log\left(N\right)$, for
bounding the main term it is just enough to consider
\[
\frac{N\log\left(N\right)^{2}}{\varphi\left(q\right)}\sum_{\rho:\left|\gamma\right|\leq N}\frac{1}{\left|\zeta^{\prime}(\rho)\right|}\sum_{\chi\neq\chi_{0}}\sum_{\rho_{\chi}:\left|\gamma_{\chi}\right|\leq N}\left|\frac{\Gamma\left(\rho\right)\Gamma\left(\rho_{\chi}\right)}{\Gamma\left(\rho_{\chi}+\rho+1\right)}\right|
\]
\[
\ll N^{3/2}\log\left(N\right)^{2}\log\left(q\right)\log\left(qN\right)\log\left(N\right)^{1/4}
\]
\[
\ll N^{3/2}\log\left(N\right)^{13/4}\log\left(q\right).
\]
So, combining the pieces and taking $\theta=1/2-2\varepsilon$, we
finally obtain
\[
\sum_{1<q\leq N^{\theta}}\frac{1}{\varphi\left(q\right)}\left|\sum_{\chi\neq\chi_{0}}\overline{\chi}\left(N\right)\sum_{n\leq N\beta}\sum_{m\leq N\beta-n}\Lambda\left(n\right)\chi\left(n\right)\mu\left(m\right)\log\left(m\right)f\left(\frac{n+m}{N}\right)\right|
\]
\[
\ll_{f,\beta}N^{3/2}\log\left(N\right)^{17/4}\sum_{q\leq N^{1/2-2\varepsilon}}\log\left(q\right)+N^{3-\delta-2\varepsilon}\log\left(N\right)^{8}
\]
\[
\ll_{f,\beta,\varepsilon}N^{2-\varepsilon}
\]
and so the thesis.
\end{proof}

\section{examples}

In this section we propose some interesting examples that can be easily
deduced by the previous theorems, and that illustrate both the Sobolev
and the H\"older--Zygmund frameworks through standard compactly
supported weights.

We start with the classical Ces\`aro-Riesz weights, that is, the
function
\[
f_{CR,k}\left(x\right):=\begin{cases}
\left(1-x\right)^{k}, & x\in\left[0,1\right]\\
0, & \text{otherwise}
\end{cases}
\]
where $k\in\mathbb{R}_{0}^{+}$. If we assume that $k>1$, it is simple
to observe that $f_{CR,k}\left(x\right)$ verifies the hypotheses
of Theorem \ref{thm:MAINCGZ}, and so, making a simple change of variable,
we have immediately the following results.
\begin{thm}
Fix $0<\varepsilon<1/2$. Assume GRH and Conjecture \ref{conj:simple zeros}.
Let $N\geq4$ and $k>1$, $q\in\mathbb{N},1<q<N^{\theta},\,0<\theta<1$.
Then
\[
\frac{1}{\varphi\left(q\right)\Gamma\left(k+1\right)}\sum_{\chi\neq\chi_{0}}\overline{\chi}\left(N\right)\sum_{n\leq N}\left(1-\frac{n}{N}\right)^{k}\sum_{m\leq n}\Lambda\left(m\right)\chi\left(m\right)\mu\left(n-m\right)
\]
\[
=-\frac{1}{\varphi\left(q\right)}\sum_{\rho}\frac{N^{\rho}\Gamma\left(\rho\right)}{\zeta^{\prime}(\rho)}\sum_{\chi\neq\chi_{0}}\overline{\chi}\left(N\right)\sum_{\rho_{\chi}}\frac{N^{\rho_{\chi}}\Gamma\left(\rho_{\chi}\right)}{\Gamma\left(\rho_{\chi}+\rho+k+1\right)}
\]
\[
+O_{\varepsilon}\left(\frac{k\left(k-1\right)N^{1/2+\varepsilon}\log\left(q\right)^{3}\log\left(N\right)^{3}}{\Gamma\left(k+1\right)}\int_{0}^{1}\left(1-w\right)^{k-2}\left(1+w^{3/2+\varepsilon}\right)\left(1-\log\left(w\right)^{3}\right)dw\right)
\]
and the implicit constant depends only on $\varepsilon$. 
\end{thm}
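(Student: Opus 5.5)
This result is obtained by specializing Theorem \ref{thm:Main1} to $f=f_{CR,k}$ with $\beta=1$ and then letting $T,U\rightarrow+\infty$.

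\textbf{Step 1: admissibility of the weight.} For $k>1$, the function $f_{CR,k}$ is supported in $[0,1)$ after modifying the single point $x=1$, where it already vanishes. It is $C^{1}$ on $(0,1)$. Its derivative $f^{\prime}(w)=-k(1-w)^{k-1}$ is absolutely continuous, since $f^{\prime\prime}(w)=k(k-1)(1-w)^{k-2}$ is integrable precisely when $k>1$. Near the origin $f^{\prime\prime}$ is bounded. Hence the hypotheses of Theorem \ref{thm:MAINCGZ} and Theorem \ref{thm:Main1} hold.

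\textbf{Step 2: rewriting the left-hand side.} Substituting $k^{\prime}=n+m$ in the double sum gives
\[
\sum_{n\leq N}\sum_{m\leq N-n}\Lambda(n)\chi(n)\mu(m)f\left(\frac{n+m}{N}\right)=\sum_{j\leq N}\left(1-\frac{j}{N}\right)^{k}\sum_{m\leq j}\Lambda(m)\chi(m)\mu(j-m).
\]
Here the term $\mu(0)$ is absent, since $m\leq j-1$ is forced, and this matches the stated left-hand side.

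\textbf{Step 3: the main term.} In the main term of Theorem \ref{thm:Main1} I need the integral
\[
\int_{0}^{1}k(k-1)(1-w)^{k-2}w^{\rho+\rho_{\chi}+1}\,dw=k(k-1)\frac{\Gamma(k-1)\Gamma(\rho+\rho_{\chi}+2)}{\Gamma(\rho+\rho_{\chi}+k+1)}=\Gamma(k+1)\frac{\Gamma(\rho+\rho_{\chi}+2)}{\Gamma(\rho+\rho_{\chi}+k+1)},
\]
which is a Beta integral. This factor cancels $\Gamma(\rho_{\chi}+\rho+2)$ in the denominator, producing the kernel $\Gamma(\rho)\Gamma(\rho_{\chi})/\Gamma(\rho+\rho_{\chi}+k+1)$. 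Dividing by $\Gamma(k+1)$ then gives the stated main term, for now still truncated at $|\gamma|\leq U$ and $|\gamma_{\chi}|\leq T$.

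\textbf{Step 4: the limit $T,U\rightarrow+\infty$, the main obstacle.} By Theorem \ref{thm:dobule series =00005Cchi} with $f(z)=1/\zeta^{\prime}(z)$, $g\equiv1$ and the parameter $k>1/2$ there, the double series converges absolutely. The hypothesis on $\sum|1/(\zeta^{\prime}(\rho)\rho)|$ is supplied by Conjecture \ref{conj:simple zeros} together with (\ref{eq:1/zeta^=00005Cprime}) via partial summation. So the truncated main term converges to the full double sum. In the error terms, $\mathcal{E}_{1}\rightarrow\varphi(q)N^{3/2+\varepsilon}\log(q)^{3}\log(N)^{3}(1+w^{3/2+\varepsilon})(1+|\log w|^{3})$ and $\mathcal{E}_{2}$ converges to a smaller quantity that is absorbed into it, exactly as in the proof of Theorem \ref{thm:dEHtwis bound}. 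After multiplying by $\frac{1}{N}\int_{0}^{1}|f^{\prime\prime}|$ and dividing by $\varphi(q)\Gamma(k+1)$, this gives the stated $O_{\varepsilon}$ term. Since $w\in(0,1)$, we have $|\log w|^{3}=-\log(w)^{3}$, which is where the expression $1-\log(w)^{3}$ in the statement comes from.

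The only delicate point is justifying the interchange of limits. The error bound of Theorem \ref{thm:Main1} is uniform in $T$ and $U$, and the main term converges absolutely, so the limit can be taken on both sides.
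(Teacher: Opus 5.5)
Your proposal is correct and follows the route the paper intends (the paper only says the result follows ``immediately'' by a change of variable): specialize Theorem \ref{thm:Main1} to $f_{CR,k}$ with $\beta=1$, evaluate $\int_{0}^{1}f^{\prime\prime}(w)w^{\rho+\rho_{\chi}+1}\,dw=\Gamma(k+1)\Gamma(\rho+\rho_{\chi}+2)/\Gamma(\rho+\rho_{\chi}+k+1)$ as a Beta integral, and let $T,U\rightarrow+\infty$ using the absolute convergence of the double series over zeros. One point to flag: dividing the error by $\varphi(q)$ is justified by the proof of Theorem \ref{thm:Main1}, where the bounds for $I_{1},I_{2},I_{3}$ are for the character sum before the $1/\varphi(q)$ normalization, and not by its statement as printed, which omits that factor; the paper relies on the same implicit correction.
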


Note that by Theorem \ref{thm:dobule series =00005Cchi} the double
series is absolutely convergent, and this is the reason of the form
of the previous theorem. Then, we can deduce the following theorem.
\begin{thm}
Assume all the hypotheses of the previous theorem and fix $0<\varepsilon<1/2$
and $\theta=1-2\varepsilon$. Then, the following weighted average
of diagonal twisted EH holds
\[
\sum_{1<q\leq N^{1-2\varepsilon}}\frac{1}{\varphi\left(q\right)\Gamma\left(k+1\right)}\left|\sum_{\chi\neq\chi_{0}}\overline{\chi}\left(N\right)\sum_{n<N}\left(1-\frac{n}{N}\right)^{k}\sum_{m\leq n}\Lambda\left(m\right)\chi\left(m\right)\mu\left(n-m\right)\right|
\]
\[
\ll_{\varepsilon}N^{2-\varepsilon}E\left(f^{\prime\prime}\right)
\]
where
\[
E\left(f^{\prime\prime}\right):=\frac{k\left(k-1\right)}{\Gamma\left(k+1\right)}\int_{0}^{1}\left(1-w\right)^{k-2}\left(1-\log\left(w\right)^{3}\right)\left(1+w^{2}\right)dw.
\]
and the implicit constant depends only on $\varepsilon$.
\end{thm}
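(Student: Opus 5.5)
The plan is to obtain this directly from the preceding Cesàro--Riesz explicit formula, just as Theorem \ref{thm:dEHtwis bound} was obtained from Theorem \ref{thm:Main1}. Equivalently, one can apply Theorem \ref{thm:dEHtwis bound} to $f=f_{CR,k}$ with $\beta=1$ and rescale by $\Gamma(k+1)^{-1}$.

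\textbf{Step 1: the weight fits the framework.} For $k>1$, $f_{CR,k}$ is supported in $[0,1)$ and is $C^{1}$ on $(0,1)$. Its derivative $f^{\prime}_{CR,k}=-k(1-x)^{k-1}$ is absolutely continuous. Its second derivative $f''_{CR,k}(w)=k(k-1)(1-w)^{k-2}$ is integrable and bounded near $w=0$. Hence the hypotheses of Theorem \ref{thm:Main1} hold.

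\textbf{Step 2: matching the sum.} Reindexing with $n\mapsto n+m$ turns the double sum of Theorem \ref{thm:MAINCGZ} into $\sum_{n<N}(1-n/N)^{k}\sum_{m\le n}\Lambda(m)\chi(m)\mu(n-m)$. The point $n=N$ contributes nothing, because the weight vanishes there.

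\textbf{Step 3: bounding each modulus.} For each $q$, bound the main zero-sum of the previous theorem by absolute values. Using Proposition \ref{prop:gamma est} and the dyadic decomposition in $\gamma_\chi$ with (\ref{eq:N(T,=00005Cchi)}), exactly as in (\ref{eq:esti chi mod q}), this contribution is
\[
\ll N\varphi(q)\log(q)\int_0^1|f''_{CR,k}(w)|\,w^{2}\,dw.
\]
The $\Gamma(\rho_\chi+\rho+k+1)$ denominator decays at least as fast as in the case $k=1$, so the convergence argument of Theorem \ref{thm:dobule series =00005Cchi} applies unchanged. Alternatively, keep $\int_0^1 f''(w)w^{\rho+\rho_\chi+1}dw$ in integral form, before it is evaluated as a Beta function, so that the estimate is literally the one already proved.

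The error term of the previous theorem, once multiplied by $\varphi(q)$ to account for the normalization, is $\varphi(q)N^{1/2+\varepsilon}\log(q)^3\log(N)^3$ times an integral of $|f''|(1+|\log w|^3)(1+w^2)$ divided by $\Gamma(k+1)$. This integral is dominated by $E(f'')$ in the statement, since $1-\log(w)^3\ge 1+|\log w|^3$ on $(0,1)$ and $1+w^{3/2+\varepsilon}\le 2(1+w^2)$.

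\textbf{Step 4: summing over $q$.} Divide by $\varphi(q)$ and sum over $1<q\le N^{1-2\varepsilon}$. This gives
\[
N^{2-2\varepsilon}\log N+N^{3/2}\log(N)^{6}\ll_\varepsilon N^{2-\varepsilon},
\]
multiplied by $E(f'')$.

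\textbf{Main obstacle.} The only delicate point is the uniformity in $q$ of the double zero sum. It is handled by the dyadic argument of Theorem \ref{thm:dEHtwis bound}, which yields a factor $\log q$ rather than anything worse. The hypothesis $k>1$ is exactly what makes $(1-w)^{k-2}$ integrable, so $E(f'')$ is finite.
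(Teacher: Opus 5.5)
Your proposal is correct and follows the route the paper intends. You specialize the $W^{2,1}$ bound of Theorem \ref{thm:dEHtwis bound} (equivalently, bound the preceding Ces\`aro--Riesz explicit formula term by term) to $f=f_{CR,k}$ with $\beta=1$, rescale by $\Gamma(k+1)^{-1}$, and sum over $1<q\le N^{1-2\varepsilon}$; the per-$q$ estimate never uses coprimality with $N$. Keeping the main term in the form $\int_0^1 f''(w)w^{\rho+\rho_\chi+1}\,dw$, rather than comparing $\Gamma(\rho_\chi+\rho+k+1)$ with the case $k=1$, is the right choice, since it is what makes the implicit constant independent of $k$, as the statement requires.
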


Actually, since the error terms of the previous theorem depend explicitly
on $k$, the main result can be extended also to $k=1$. Now, take
\[
f_{Zyg}\left(x\right):=\begin{cases}
\left(1-x\right)\sin\left(\log\left(1-x\right)\right), & x\in[0,1)\\
0, & \text{otherwise;}
\end{cases}
\]
now, it is clear that, taking for instance $\alpha=0$ and $\beta=1+\varepsilon$
with $\varepsilon\in(0,1/2)$, then $f_{Zyg}\left(x\right)$ does
not fit the hypotheses of Theorem \ref{thm:MAINCGZ} but this function
is in $\mathcal{C}^{1}\left(\mathbb{R}\right).$ Indeed, 
\[
f_{Zyg}^{\prime}\left(x\right)=-\sin\left(\log\left(1-x\right)\right)-\cos\left(\log\left(1-x\right)\right)
\]
hence
\[
\left|f_{Zyg}^{\prime}\left(x\right)\right|\leq2
\]
and, by continuity, $f_{Zyg}\left(0\right)=f_{Zyg}\left(1\right)=0$,
so $f_{Zyg}\left(x\right)\in\text{Lip}\left(\mathbb{R}\right)\subset\mathcal{C}^{1}\left(\mathbb{R}\right)$. 

Observe that this function is a variant of the Ces\`aro-Riesz weight
of order $1$. Then, for this function, we can prove what follows.
\begin{thm}
Fix $0<\varepsilon<1/4.$ Assume GRH, let $N\geq4$, $T,U\geq4$,
$q\in\mathbb{N},\,1<q<N^{\theta},\,0<\theta<1$. Assume also the Conjecture
\ref{conj:simple zeros}. Then
\[
\frac{1}{\varphi\left(q\right)}\sum_{\chi\neq\chi_{0}}\overline{\chi}\left(N\right)\sum_{n\leq N}\sum_{m\leq N-n}\Lambda\left(n\right)\chi\left(n\right)\mu\left(m\right)f_{Zyg}\left(\frac{n+m}{N}\right)
\]
\[
=-\frac{1}{\varphi\left(q\right)}\sum_{\rho:\left|\gamma\right|\leq U}\frac{\Gamma\left(\rho\right)}{\zeta^{\prime}(\rho)}\sum_{\chi\neq\chi_{0}}\overline{\chi}\left(N\right)\sum_{\rho_{\chi}:\left|\gamma_{\chi}\right|\leq T}\frac{\Gamma\left(\rho_{\chi}\right)}{\Gamma\left(\rho_{\chi}+\rho+2\right)}\sum_{k\leq N-1}\Delta_{1/N}^{2}\left(f_{Zyg},\frac{k}{N}\right)k^{\rho+\rho_{\chi}+1}
\]
\[
+O_{f_{Zyg},\varepsilon,\theta}\left(N^{3/2}\log\left(qT\right)^{2}+\frac{N^{2}\log\left(qNT\right)^{2}}{T}\right.
\]
\[
\left.+\log\left(q\right)^{3}N^{3/2+\varepsilon}\log\left(N\right)^{3}\left[1+\frac{N\log\left(qT\right)^{2}}{T}\right]\left[1+\frac{N^{1-\varepsilon}}{U^{1-\varepsilon}}\right]\right).
\]
\end{thm}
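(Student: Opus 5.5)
The plan is to obtain the statement as the special case $\delta=1$, $\beta=1+\varepsilon$ of Theorem \ref{thm:explicit discrete}. The first step is to check the hypotheses of Theorem \ref{thm:MainDiscrete} for $f_{Zyg}$.

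\emph{Support.} $f_{Zyg}$ vanishes outside $[0,1)$, so its support lies in $[0,\beta)$ for any $\beta>1$. The choice $\alpha=0$ gives $\eta\alpha=0$. Since the summations run up to $N$, I would take $\eta=N$ and regard the support as $[0,1]\subset[0,\beta)$; the terms with $k/N\geq1$ then vanish.

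\emph{Condition 2).} With $g_{1}=\Lambda\chi$ and $g_{2}=\mu$ we have $G_{1}(1)=\psi(1,\chi)=0$ and $G_{1}(0)=G_{2}(0)=0$.

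\emph{Regularity.} The paper just showed that $f_{Zyg}\in\text{Lip}(\mathbb{R})\subset\mathcal{C}^{1}(\mathbb{R})$. Theorem \ref{thm:finitediff bound} with $n=2$ therefore gives $\Delta_{1/N}^{2}(f_{Zyg},k/N)\ll N^{-1}$, which is exactly the input used in Theorem \ref{thm:explicit discrete} with $\delta=1$. One can also see this directly: $|\Delta^{2}_h f|\le 2\,\mathrm{Lip}(f)\,h$.

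Next I rerun the proof of Theorem \ref{thm:explicit discrete} with $\delta=1$. Corollary \ref{cor:discrete} splits the average into two pieces: the convolution $\int_{0}^{k}\psi(s,\chi)M(k-s)\,ds$ weighted by $\Delta^{2}_{1/N}$, and the discrete remainder $\sum_{u}\psi(u,\chi)\mu(k-u)$.

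For the convolution, I insert the truncated explicit formulae of Theorems \ref{thm:Explicit psi} and \ref{thm:Explicit M}. This produces the stated beta-integral main term with $\Gamma(\rho)\Gamma(\rho_\chi)/\Gamma(\rho+\rho_\chi+2)\,k^{\rho+\rho_\chi+1}$. It also produces the remainders $I_{1},I_{2},I_{3}$ at $w=k/N$, whose bounds from Theorem \ref{thm:Main1} are summed over $k\le N-1$ with the factor $N^{-1}$. This gives
\[
\log(q)^{3}N^{3/2+\varepsilon}\log(N)^{3}\left[1+\frac{N\log(qT)^{2}}{T}\right]\left[1+\frac{N^{1-\varepsilon}}{U^{1-\varepsilon}}\right].
\]
For the discrete remainder, inserting the explicit formula for $\psi(u,\chi)$ yields the terms $N^{3/2}\log(qT)^{2}$ and $N^{2}\log(qNT)^{2}/T$. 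Here I use $\sum_{|\gamma_\chi|\le T}|\rho_\chi|^{-1}\ll\log(qT)^2$, and the averaging over $\chi$ with the factor $1/\varphi(q)$ cancels the $\varphi(q)$ arising from the sum over characters.

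Substituting $\delta=1$ into $N^{5/2-\delta}$ and $N^{3-\delta}$ gives exactly the stated error. The dependence on $\beta$ is absorbed into the dependence on $f_{Zyg}$ and $\varepsilon$, since $\beta=1+\varepsilon$.

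The main point needing care is that Theorem \ref{thm:explicit discrete} requires $\eta\beta\in\mathbb{N}^{+}$. With $\eta=N$ and $\beta=1+\varepsilon$ this can fail. I would handle it by applying Theorem \ref{thm:MainDiscrete} with $\beta'=1+1/N$, so that $N\beta'=N+1$. The support condition still holds, and the extra terms $k=N$ in the $k$-sum vanish because $f_{Zyg}(1)=f_{Zyg}(1+1/N)=f_{Zyg}(1+2/N)=0$. Equivalently, the $k$-sum truncates at $k\le N-1$ as stated. The constants remain uniform, since every $\beta$-dependence is of the form $\ll_{\beta}$ with $\beta\le 3/2$.

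The only other thing I would verify is that no estimate in the proof of Theorem \ref{thm:explicit discrete} used $\delta>1$. It did not: only $\Delta^2\ll N^{-\delta}$ was used, together with $\varphi(q)$ factors that cancel.
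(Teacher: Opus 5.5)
Your proposal is correct and takes the same route as the paper, which gives no separate proof and simply specializes Theorem~\ref{thm:explicit discrete} to $\delta=1$. As you do, it uses $f_{Zyg}\in\mathrm{Lip}(\mathbb{R})\subset\mathcal{C}^{1}(\mathbb{R})$ to get $\Delta^{2}_{1/N}(f_{Zyg},k/N)\ll N^{-1}$ and then substitutes $\delta=1$ into $N^{5/2-\delta}$ and $N^{3-\delta}$. Your choice $\beta'=1+1/N$ (or simply $\beta=1$) also fixes the integrality condition $N\beta\in\mathbb{N}^{+}$, which the paper's choice $\beta=1+\varepsilon$ overlooks, and it changes nothing else.
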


From the previous theorem, we derive the following upper bound.
\begin{thm}
Assume all the hypotheses of the previous theorem and fix $0<\varepsilon<1/4$
and $\theta=1-2\varepsilon$. We get
\[
\sum_{\underset{{\scriptstyle \left(N,q\right)=1}}{1<q\leq N^{1/2-2\varepsilon}}}\frac{1}{\varphi\left(q\right)}\left|\sum_{\chi\neq\chi_{0}}\overline{\chi}\left(N\right)\sum_{n\leq N}\sum_{m\leq N-n}\Lambda\left(n\right)\chi\left(n\right)\mu\left(m\right)f_{Zyg}\left(\frac{n+m}{N}\right)\right|\ll_{f_{Zyg},\varepsilon}N^{2-\varepsilon}.
\]
\end{thm}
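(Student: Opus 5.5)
This bound is a special case of Theorem \ref{thm:dEhtwist discrete} with $\delta=1$, $\beta=1$, and the plan is simply to check its hypotheses for $f_{Zyg}$. We take $\beta=1$, so $\eta=N$ gives $\eta\beta=N\in\mathbb{N}^{+}$ and $\eta\alpha=0$. The support of $f_{Zyg}$ is contained in $[0,1)$. For $g_{1}=\Lambda\chi$ and $g_{2}=\mu$ we have $G_{1}(0)=G_{2}(0)=0$ and $G_{1}(1)=\Lambda(1)\chi(1)=0$, so the hypotheses of Theorem \ref{thm:MainDiscrete} hold. Moreover $f_{Zyg}$ is bounded and continuous on $\mathbb{R}$ (it vanishes at $0$ and tends to $0$ as $x\to1^{-}$) and is Lipschitz with constant $2$. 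Hence $f_{Zyg}\in\mathrm{Lip}(\mathbb{R})\subset\mathcal{C}^{1}(\mathbb{R})$, since $|\Delta_{h}^{2}(f,x)|\le|\Delta_{h}(f,x+h)|+|\Delta_{h}(f,x)|\le4|h|$, and Theorem \ref{thm:finitediff bound} applies with $n=2$, $\delta=1$.

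The key steps, in order, are these. First, apply the explicit formula of the previous theorem (the $\delta=1$ case of Theorem \ref{thm:explicit discrete}) with $T\asymp U\asymp N$ and $U<T$. Its error term is then $\ll_{\varepsilon}\log(q)^{5}N^{3/2+\varepsilon}\log(N)^{5}$, exactly as for $E_{\max}^{*}$.

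Second, handle the main term as in the $\delta\in[1,3/2)$ branch of the proof of Theorem \ref{thm:dEhtwist discrete}. Using $\Delta^{2}_{1/N}=\Delta_{1/N}[\Delta_{1/N}]$, we sum by parts in $k$. Since $f_{Zyg}$ is genuinely Lipschitz, we even have $\Delta_{1/N}(f_{Zyg},x)\ll N^{-1}$ without the $\log N$ factor. This reduces the main term to $N|\rho+\rho_{\chi}+1|$ times $\Gamma(\rho)\Gamma(\rho_{\chi})/\Gamma(\rho+\rho_{\chi}+2)$, that is, to the truncated double series with $\Gamma(\rho+\rho_{\chi}+1)$ in the denominator. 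That series was already shown there to be $\ll\log(q)(TU)^{1/4}\log(qT)^{2}\log(U)^{1/2}$, using Proposition \ref{prop:gamma est}, Conjecture \ref{conj:simple zeros}, and the split according to the signs of $\gamma,\gamma_{\chi}$ and the size of $|\gamma+\gamma_{\chi}|$. Each modulus $q$ therefore contributes $\ll N^{3/2}\log(N)^{9/4}\log(q)^{2}$ from the main term.

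Third, sum over $1<q\le N^{1/2-2\varepsilon}$ with $(N,q)=1$; the coprimality restriction only shrinks the sum. This gives
\[
\ll N^{3/2+\varepsilon}\log(N)^{5}\sum_{q\le N^{1/2-2\varepsilon}}\log(q)^{5}\ll N^{2-\varepsilon}\log(N)^{10}\cdot N^{-\varepsilon}\ll_{\varepsilon}N^{2-\varepsilon}.
\]
The statement's ``$\theta=1-2\varepsilon$'' should be read as the level $N^{1/2-2\varepsilon}$ actually appearing in the sum, consistent with $\delta=1<3/2$.

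I expect the main obstacle to be the main-term double series, which is not absolutely convergent once the denominator becomes $\Gamma(\rho+\rho_{\chi}+1)$. That is precisely why the level is restricted to $N^{1/2-2\varepsilon}$. The $(TU)^{1/4}\asymp N^{1/2}$ loss comes from the same-sign region $\gamma\gamma_{\chi}>0$, where the Gamma ratio decays only like $|\gamma+\gamma_{\chi}|^{-3/2}$. Since this estimate is already done in the proof of Theorem \ref{thm:dEhtwist discrete}, I would cite it rather than redo it, and only check that the boundary term $\Delta_{1/N}(f_{Zyg},1)(N-1)^{\rho+\rho_{\chi}+1}\ll N$ is harmless.
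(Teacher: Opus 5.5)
Your route is the paper's. The statement is just the $\delta=1$, $\beta=1$ instance of Theorem \ref{thm:dEhtwist discrete}, proved through its $\delta\in[1,3/2)$ branch: summation by parts in $k$, then the truncated double series with $\Gamma(\rho+\rho_{\chi}+1)$ in the denominator, with $T\asymp U\asymp N$. Your check of the hypotheses for $f_{Zyg}$ is fine. The boundary term is in fact identically zero, since $\Delta_{1/N}(f_{Zyg},1)=f_{Zyg}(1+1/N)-f_{Zyg}(1)=0$. You are also right to read ``$\theta=1-2\varepsilon$'' as a typo for the level $N^{1/2-2\varepsilon}$.

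There is one arithmetic slip in your final display, and as written it makes the last inequality false. We have $N^{3/2+\varepsilon}\cdot N^{1/2-2\varepsilon}=N^{2-\varepsilon}$, so the error-term contribution is $N^{2-\varepsilon}\log(N)^{10}$. There is no extra factor $N^{-\varepsilon}$, and $N^{2-\varepsilon}\log(N)^{10}$ is not $\ll N^{2-\varepsilon}$.

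The fix is to decouple the two $\varepsilon$'s. The $\varepsilon$ in the explicit formula comes from $M(x)\ll x^{1/2+\varepsilon}$ and Theorem \ref{thm:Explicit M}. It is arbitrary and independent of the level $\theta$, since the previous theorem holds for every fixed $\varepsilon\in(0,1/2)$. So apply that theorem with $\varepsilon/2$ in place of $\varepsilon$. With $T\asymp U\asymp N$, the error per modulus becomes $\ll\log(q)^{5}N^{3/2+\varepsilon/2}\log(N)^{5}$. Summing over $q\le N^{1/2-2\varepsilon}$ then gives $\ll N^{2-3\varepsilon/2}\log(N)^{10}\ll_{\varepsilon}N^{2-\varepsilon}$.

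The main-term contribution $N^{3/2}\log(N)^{O(1)}$ per modulus sums to $N^{2-2\varepsilon}\log(N)^{O(1)}$, which is harmless. The paper's own closing line in the proof of Theorem \ref{thm:dEhtwist discrete} hides the same issue: it writes $N^{5/2-\delta}$ where the error is actually $N^{5/2-\delta+\varepsilon}$. So the same adjustment is needed there.
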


\section*{Acknowledgements}

The author is member of the Gruppo Nazionale per l\textquoteright Analisi
Matematica, la Probabilit\`a e le loro Applicazioni (GNAMPA) of the
Istituto Nazionale di Alta Matematica (INdAM). The author thanks Professor Alessandro Zaccagnini and Professor Alessandro Gambini for some discussions on this topic.

\subsection*{Conflict of interest} 
The author declare that he has no conflict of interest.

\end{document}